\documentclass{elsarticle}
\usepackage{etoolbox}
\usepackage{amssymb}
\usepackage{amsmath}
\usepackage{amsthm}
\usepackage{mathtools}
\usepackage{subcaption}
\usepackage{enumitem}
\usepackage{csquotes}
\usepackage{xcolor}
\usepackage{hyperref}
\usepackage{tikz-cd}
\usepackage[shortcuts]{extdash}
\usepackage{macros}
\usepackage{todonotes}
\usepackage{bbm}

\theoremstyle{plain}
\newtheorem{theorem}{Theorem}[section]
\newtheorem{corollary}[theorem]{Corollary}
\newtheorem{lemma}[theorem]{Lemma}
\newtheorem{proposition}[theorem]{Proposition}

\theoremstyle{definition}
\newtheorem{definition}[theorem]{Definition}

\newtheorem{example}[theorem]{Example}

\theoremstyle{remark}
\newtheorem{remark}[theorem]{Remark}

\begin{document}

\begin{frontmatter}

\title{Computads with invertible generators\texorpdfstring{\\}{}
    for weak \texorpdfstring{\(\omega\)}{ω}-categories}

\author{Thibaut Benjamin}
\ead{thibaut.benjamin@universite-paris-saclay.fr}

\affiliation{
    organisation={Laboratoire Méthodes Formelles, CNRS, ENS Paris-Saclay,
        Université Paris-Saclay},
    addressline={4 avenue des Sciences},
    city={Gif-sur-Yvette},
    postcode={91190},
    country={France}
}

\author{Camil Champin}
\ead{camil.champin@ens-lyon.com}

\affiliation{
    organisation={Ecole Normale Superieure de Lyon},
    addressline={15 parvis René Descartes},
    city={Lyon},
    postcode={69342},
    country={France}
}

\author{Ioannis Markakis}
\ead{ioannis.markakis@cl.cam.ac.uk}

\affiliation{
    organisation={Department of Computer Science and Technology,
        University of Cambridge},
    addressline={William Gates Building, 15 JJ Thompson Avenue},
    city={Cambridge},
    postcode={CB3 0FD},
    country={UK}
}

\begin{abstract}
    We extend the notion of computads for weak \(\omega\)\=/categories to allow
    marking certain generators as invertible, and describe inductively the free
    \(\omega\)\=/categories they generate. This gives a simple, finite
    description of the walking equivalences, the \(\omega\)\=/categories
    classifying invertible cells. We then construct a coreflection from
    generalised to ordinary computads, preserving the generated
    \(\omega\)\=/categories, and conclude that \(\omega\)\=/categories
    generated by generalised computads are cofibrant. Finally, we study
    the subcategory of generalised computads and generator-preserving morphisms,
    and show that it is a presheaf topos, similarly to the case of ordinary
    computads.
\end{abstract}

\begin{keyword}

\MSC[2020] 18N30 \sep 18N65

\end{keyword}

\end{frontmatter}


\section{Introduction}
\label{sec:intro}




Isomorphisms play a crucial role in category theory, providing the right notion
of equivalence between objects in a category and between categories. The same
role is played by invertible cells in \(n\)\=/categories, which are defined
inductively as cells that admit inverses up to an equivalence in a hom
\((n-1)\)\=/category. In infinite-dimensional categories, invertibility is a
richer concept with several variants, as explained by Ozornova, Rovelli and
Walde~\cite{ozornovaCoresLocalizations$inftyinfty$categories2026}, or by
Henry and Loubaton~\cite{loubatonInductiveModelStructure2023}.
In this article,
we focus on coinductively invertible cells in weak \(\omega\)\=/categories in
the sense of Batanin~\cite{bataninMonoidalGlobularCategories1998} and
Leinster~\cite{leinsterHigherOperadsHigher2003}. As the name suggests, these are
defined coinductively as cells that admit inverses up to a higher coinductively
invertible cells.

We propose a generalisation of computads for weak \(\omega\)\=/categories, in
which certain generators are marked as invertible. We do this following
Dean et al.~\cite{deanComputadsWeak$omega$categories2024}, who first
define mutually inductively computads and the underlying globular set of the
\(\omega\)\=/categories they generate. They then define \(\omega\)\=/categories
as algebras for the induced monad on globular sets and show that this monad is
equivalent to the free weak \(\omega\)\=/category monad of
Leinster~\cite{leinsterHigherOperadsHigher2003}. We extend their inductive
construction to define computads with invertible generators by adding new
operations forcing invertible generators to be coinductively invertible. This
extension induces the same monad on globular sets, so our computads
still give rise to weak \(\omega\)\=/categories in the sense of Leinster.

Instead of a two-sided inverse, our framework coinductively produces separate
left and right inverses for each invertible generator. This allows us to give a
concise description of the walking equivalence of Ozornova and Rovelli~\cite{
ozornova$inftyn$categoriesContext2025} as the computad with two objects
and an invertible generator between them, and show that it is the
\(\omega\)\=/category classifying invertible arrows: morphisms out of the
walking equivalence correspond to a morphism together with the data witnessing
its invertibility. This computad and its suspensions, which admit a similar
description, are conjectured to play a fundamental role in the homotopy theory
of weak \(\omega\)\=/categories; Fujii, Hoshino and
Maehara~\cite{fujii$o$equifibrationsStrictWeak2025} have shown that
\(\omega\)\=/equifibrations, the \(\omega\)\=/categorical analogue of
isofibrations, are precisely the maps with the right lifting property against
the suspensions of the source inclusion of the walking equivalence.

Our main result shows that the additional expressive power of computads with
invertible generators comes at no cost: we construct a coreflection from the
category of computads with invertible generators to that of ordinary computads,
and show that it preserves the generated \(\omega\)\=/categories. The
correflection turns invertible generator, their inverses and the higher
cancellator cells into ordinary generators yielding, in general, a much larger
presentation of the same \(\omega\)\=/category. Combined with previous work
by Dean et al.~\cite{deanComputadsWeak$omega$categories2024} and
Markakis~\cite{markakisComputadsGeneralisedSignatures2024}, this implies that
\(\omega\)\=/categories generated by computads with invertible generators are
cofibrant for the factorisation system cofibrantly generated by the
inclusions of spheres into disks.

We conclude by studying the subcategory of computads with rigid morphisms,
that is, morphisms sending generators to generators. As with ordinary
computads, we show that this category is a presheaf topos and that the
restriction of the underlying globular set functor to this subcategory is
familially representable. This is in contrast to the case of strict
\(\omega\)\=/categories, where the categories of computads and rigid morphisms
fail to be presheaf toposes above dimension 2, as shown by
Makkai and Zawadowski~\cite{makkaiCategory3computadsNot2008}, and further
explained by Cheng~\cite{chengDirectProofThat2012}.

\subsection*{Related Work}
\label{subsec:intro-related-work}

The weak \(\omega\)\=/categories discussed in this article are originally due to
Leinster~\cite{leinsterHigherOperadsHigher2003}, and they are a variant of these
by Batanin~\cite{bataninMonoidalGlobularCategories1998}. More precisely, we
use the inductive definition of Dean et al.~\cite{
deanComputadsWeak$omega$categories2024} which is inspired by the type-theoretic
one of Finster and Mimram~\cite{finsterTypetheoreticalDefinitionWeak2017}.
Equivalences between these definitions and the ones proposed by
Maltsiniotis~\cite{maltsiniotisGrothendieck$infty$groupoidsStill2010} have been
established by Ara~\cite{ara$infty$groupoidesGrothendieckVariante2010} and
Bourke~\cite{bourkeIteratedAlgebraicInjectivity2020}, by Benjamin, Finster and
Mimram~\cite{benjaminGlobularWeak$omega$categories2024} and by Benjamin,
Markakis and Sarti~\cite{benjaminCaTTContextsAre2024}.

Coinductive invertibility in weak \(\omega\)\=/categories was originally
introduced by Cheng~\cite{cheng$omega$categoryAllDuals2007}, and shown
equivalent to the version used in this article with separate left and right
inverses by Rice~\cite{riceCoinductiveInvertibilityHigher2020}. This version
of invertibility is the one used by Ozornova and Rovelli~\cite{
ozornovaWhatEquivalenceHigher2024} to describe the
walking equivalence for \(\omega\)\=/categories, which was then shown to be
contractible in the strict case by Hadzihasanovic et
al.~\cite{hadzihasanovicModelCoherentWalking2024}. The same walking equivalence
also appears in the work of Fujii, Hoshino and
Maehara~\cite{fujii$o$equifibrationsStrictWeak2025} characterising
\(\omega\)\=/equifibrations.



\subsection{Overview of the paper}
\label{subsec:intro-overview}

In Section~\ref{sec:pasting-diagrams}, we recall the definition of globular
pasting diagrams. Section~\ref{sec:definition} contains the main definition of
the article, that of computads with invertible generators.
In Section~\ref{sec:omega-cats} we compare computads with invertible generators
to ordinary computads, and show that the free \(\omega\)\=/categories they
generate can be described as colimits of pushouts of inclusions of spheres into
disks and walking equivalences. We close with Section~\ref{subsec:rigid} where
we study the subcategory of rigid morphisms and show that it is a presheaf
topos.



\section{Globular pasting diagrams}
\label{sec:pasting-diagrams}

\noindent
We start by introducing the category of globes and the collection of pasting
diagrams. These are the underlying category of shapes, and the arities of the
operations of weak \(\omega\)\=/categories respectively, where \(\omega\) is
the least infinite ordinal. The weak \(\omega\)\=/categories we consider in this
paper are those of Batanin~\cite{bataninMonoidalGlobularCategories1998} and
Leinster~\cite{leinsterHigherOperadsHigher2003}, which have been shown to be a
equivalent to these of Grothendieck and
Maltsiniotis~\cite{maltsiniotisGrothendieck$infty$groupoidsStill2010} by
Ara~\cite{ara$infty$groupoidesGrothendieckVariante2010} and
Bourke~\cite{bourkeIteratedAlgebraicInjectivity2020}.

\subsection{Globular sets}

\noindent
We first define the categories of globes and globular sets, as well as the
suspension and the wedge sum of globular sets. These operations allow us
to give a concise description of pasting diagrams below. We also introduce
truncated variants of these categories and operations that allow us to reason
recursively on the dimension.

\begin{definition}\label{def:globe-and-gSet}
    The \emph{globe category} \(\Globe\) has objects natural
    numbers, and morphisms generated by the \emph{source} and \emph{target} maps
    \(\cosrc_n,\cotgt_n \colon n \to (n+1)\) for every \(n\in\Nat\) under the
    globularity relations
    \begin{align}\label{eq:coglobularity}
        \cosrc_{n+1}\cosrc_n &= \cotgt_{n+1}\cosrc_n &
        \cosrc_{n+1}\cotgt_n &= \cotgt_{n+1}\cotgt_n
    \end{align}
    The category \(\gSet\) of \emph{globular sets} is the category of presheaves
    on \(\Globe\).
\end{definition}

\noindent
We denote by \(X_n\) the set \(X(n)\) for a globular set \(X\) and call its
elements the \emph{\(n\)\=/cells of \(X\)}. We denote also by \(\src_n,\tgt_n
\colon X_{n+1}\to X_n\) the functions \(X(\cosrc_n)\) and \(X(\cotgt_n)\)
respectively and call them the \emph{source} and \emph{target} functions. More
generally, we denote for \(m>n\) by \(\src^m_n, \tgt^m_n \colon X_m \to X_n\)
the iterated composite of the source and target maps, dropping the superscript
\(m\) when it can be easily inferred.
\begin{align}\label{eq:globularity}
    \begin{tikzcd}[ampersand replacement = \&]
        {X_0} \& {X_1} \& {X_2} \& {\cdots}
        \arrow["{\src_0}"', shift right=2, from=1-2, to=1-1]
        \arrow["{\tgt_0}", shift left=2, from=1-2, to=1-1]
        \arrow["{\tgt_1}", shift left=2, from=1-3, to=1-2]
        \arrow["{\src_1}"', shift right=2, from=1-3, to=1-2]
        \arrow["{\tgt_2}", shift left=2, from=1-4, to=1-3]
        \arrow["{\src_2}"', shift right=2, from=1-4, to=1-3]
    \end{tikzcd} &&
    \begin{aligned}
        \src_n\src_{n+1} &= \src_n\tgt_{n+1} \\
        \tgt_n\src_{n+1} &= \tgt_n\tgt_{n+1}
    \end{aligned}
\end{align}
We say furthermore that a pair of \(n\)\=/cells are \emph{parallel} when
\(n = 0\) or they have the same source and target, and we denote the set of
parallel \(n\)\=/cells by \(\Par_nX\).

\begin{definition}\label{def:globes-spheres}
    The \emph{\(n\)\=/globe} \(\Disk^n\) is the representable globular set
    \(\Globe(-,n)\). The \emph{\((n\!-\!1)\)\=/sphere} \(\iota_n\colon
    \Sphere^{n-1}\hookrightarrow \Disk^n\) is the globular subset obtained by
    removing the unique \(n\)\=/cell of \(\Disk^n\).
\end{definition}

\noindent
Equivalently, the \(n\)\=/sphere with the inclusion can be defined via the
following pushout diagram
\begin{equation}\label{eq:sphere-as-po}
    \begin{tikzcd}
        {\Sphere^{n-1}} & {\Disk^n} \\
        {\Disk^n} & {\Sphere^n} \\
        && {\Disk^{n+1}}
        \arrow["{\sphereInc_{n}}", from=1-1, to=1-2]
        \arrow["{\sphereInc_{n}}"', from=1-1, to=2-1]
        \arrow["{\inc_\sigma}", dashed, from=1-2, to=2-2]
        \arrow["{\inc_\tau}"', dashed, from=2-1, to=2-2]
        \arrow["\lrcorner"{anchor=center, pos=0.125, rotate=180},
            draw=none, from=2-2, to=1-1]
        \arrow["{\cosrc_{n}}", bend left, from=1-2, to=3-3]
        \arrow["{\cotgt_{n}}"', bend right, from=2-1, to=3-3]
        \arrow["{\sphereInc_{n+1}}"{description}, dashed, from=2-2, to=3-3]
    \end{tikzcd}
\end{equation}
starting from \(\Sphere^{-1} = \emptyset\) being the initial globular set. By
the Yoneda lemma and continuity of hom functors, we get natural isomorphisms
\begin{align}\label{eq:disk-sphere-representable}
    \gSet(\Disk^n, X) &\cong X_n & \gSet(\Sphere^n, X) &\cong \Par_n(X)
\end{align}
such that composition with the inclusion
\(\iota_n\colon\Sphere^{n-1}\to\Disk^n\) sends an \((n\!+\!1)\)\=/cell
to its source and target.

\begin{definition}\label{def:suspension}
    The suspension functor \(\Susp\colon\gSet\to\gSet\) sends a globular set
    \(X\) to the globular set consisting of
    \begin{align}\label{eq:suspension}
        (\Susp X)_0 &= \set{\sigma,\tau} & (\Susp X)_{n+1} &= X_n
    \end{align}
    with \(\sigma\) and \(\tau\) the source and target of every \(1\)\=/cell
    respectively.
    \begin{equation}\label{eq:suspension-image}
        \begin{tikzcd}[ampersand replacement = \&]
            {\set{\sigma,\tau}} \& {X_0} \& {X_1} \& {X_2} \& {\cdots}
            \arrow["{\sigma}"', shift right=2, from=1-2, to=1-1]
            \arrow["{\tau}", shift left=2, from=1-2, to=1-1]
            \arrow["{\src_0}"', shift right=2, from=1-3, to=1-2]
            \arrow["{\tgt_0}", shift left=2, from=1-3, to=1-2]
            \arrow["{\tgt_1}", shift left=2, from=1-4, to=1-3]
            \arrow["{\src_1}"', shift right=2, from=1-4, to=1-3]
            \arrow["{\tgt_2}", shift left=2, from=1-5, to=1-4]
            \arrow["{\src_2}"', shift right=2, from=1-5, to=1-4]
        \end{tikzcd}
    \end{equation}
\end{definition}

\noindent
It is easy to see that the suspension functor preserves the globes and spheres
in the sense that there exist isomorphisms
\begin{align}\label{eq:suspension-disk-sphere}
    \Susp \Disk^n &\cong \Disk^{n+1} & \Susp \Sphere^{n-1} &\cong \Sphere^{n}
\end{align}
for every \(n\in \Nat\) compatible with the source and target morphisms and the
inclusion of spheres into globes. Moreover, the suspension functor is a
\emph{parametric left adjoint} in the sense that it factors as a left adjoint to
the slice of \(\gSet\) under \(\Susp \emptyset\cong \Sphere^0\cong
\Disk^0+\Disk^0\) followed by the forgetful functor from the slice:
\[\begin{tikzcd}
	\gSet & {\Sphere^0\downarrow\gSet} & \gSet
	\arrow[""{name=0, anchor=center, inner sep=0},
        "\Susp", shift left=2, from=1-1, to=1-2]
	\arrow[""{name=1, anchor=center, inner sep=0},
        dashed, "\Hom", shift left=2, from=1-2, to=1-1]
	\arrow["\pr", from=1-2, to=1-3]
	\arrow["\dashv"{anchor=center, rotate=-90}, draw=none, from=0, to=1]
\end{tikzcd}\]
By the isomorphisms above, the slice \(\Sphere^0\downarrow\gSet\) is isomorphic
to the category of globular sets \(X\) equipped with two chosen \(0\)\=/cells
\(a,b\). Under this isomorphism, the right adjoint is the functor defined by
\begin{equation}\label{eq:hom}
    \Hom_X(a,b)_n = \set{x\in X_{n+1}\such\src_0(x)=a\text{ and }\tgt_0(x)=b}
\end{equation}
with source and target functions those of \(X\). The existence of this right
adjoint implies that the suspension functor \(\Susp\colon\gSet\to\gSet\)
preserves connected colimits, since the forgetful functor creates them.

\begin{definition}\label{def:wedge-sum}
    The \emph{wedge sum} \(X\vee Y\) of globular sets \(X\) and \(Y\) with
    chosen \(0\)\=/cells \(a,b\in X_0\) and \(c,d\in Y_0\) respectively is
    given by the following pushout square of globular sets:
    \begin{equation}\label{eq:wedge-sum}
        \begin{tikzcd}
            && {X\vee Y} \\
            & X && Y \\
            {\Disk^0} && {\Disk^0} && {\Disk^0}
            \arrow["\lrcorner"{anchor=center, pos=0.125, rotate=-45}, draw=none,
                from=1-3, to=3-3]
            \arrow[dashed, from=2-2, to=1-3]
            \arrow[dashed, from=2-4, to=1-3]
            \arrow["a", from=3-1, to=2-2]
            \arrow["b"', from=3-3, to=2-2]
            \arrow["c", from=3-3, to=2-4]
            \arrow["d"', from=3-5, to=2-4]
        \end{tikzcd}
    \end{equation}
    Letting the image of \(a\) and \(d\) in the pushout be its chosen
    \(0\)\=/cells, we get a monoidal structure on the slice category
    \(\Sphere^0\downarrow \gSet\) with unit object the unique morphism
    \(\Sphere^0\to \Disk^0\)
\end{definition}

\noindent
One way to see that \((\Sphere^0\downarrow \gSet,\vee, \Disk^0)\) is a monoidal
structure is to observe that the slice category is equivalently the category of
endocospans of globular sets on \(\Disk^0\) and that the wedge sum is precisely
the composition of cospans.

\begin{definition}\label{def:trunc-globe-and-gset}
    The category \(\Globe_n\) for \(n\in \Nat\cup\set{-1,\omega}\) is the full
    subcategory of \(\Globe\) with objects natural numbers \(k \le n\). The
    category \(\gSet_n\) of \(n\)\=/globular sets is the category of presheaves
    on \(\Globe_n\). The subcategory inclusion
    \(\Globe_n\hookrightarrow\Globe_m\) for \(m \ge n\) gives rise to an adjoint
    triple of functors, satisfying the following equations:
    \begin{align}\label{eq:sk-tr-cosk}
        \begin{tikzcd}[ampersand replacement = \&]
            {\gSet_m} \& {\gSet_n}
            \arrow[""{name=0, anchor=center, inner sep=0},
                "{\tr^m_n}"{description}, from=1-1, to=1-2]
            \arrow[""{name=1, anchor=center, inner sep=0},
                "{\cosk^m_n}", bend left=40, from=1-2, to=1-1]
            \arrow[""{name=2, anchor=center, inner sep=0},
                "{\sk^m_n}"', bend right=40, from=1-2, to=1-1]
            \arrow["\dashv"{anchor=center, rotate=-90},
                draw=none, from=0, to=1]
            \arrow["\dashv"{anchor=center, rotate=-90},
                draw=none, from=2, to=0]
        \end{tikzcd} &&
        \begin{aligned}
            \tr^n_m \sk^n_m &= \id \\
            \tr^n_m \cosk^n_m &= \id
        \end{aligned}
    \end{align}
    The \emph{skeleton}, \emph{truncation}, and \emph{coskeleton} functors are
    defined by left Kan extension, restriction, and right Kan
    extension along the inclusion respectively, and the equations follow by the
    inclusion being fully faithful.
\end{definition}

\noindent
This adjoint triple can be described quite explicitly as well. The truncation
functor forgets the values of an \(m\)\=/globular set \(X\) for
\(m<k\le n\). The skeleton functor and coskeleton functors extend an
\(n\)\=/globular set \(Y\) by the empty set and its set of \(n\)\=/parallel
pairs respectively:
\begin{align}\label{eq:sk-cosk-definition}
    (\sk^m_n X)_k &= \begin{cases}
        X_k&\text{for }k\le n\\
        \emptyset&\text{otherwise}
    \end{cases}&
    (\cosk^m_n X)_k &= \begin{cases}
        X_k&\text{for }k\le n\\
        \Par_n(X)&\text{otherwise}
    \end{cases}
\end{align}
Equations~\eqref{eq:sk-tr-cosk} implies that the skeleton and coskeleton
functors are also fully faithful, and we will them use for \(m = \omega\) to
implicitly identify \(n\)\=/globular sets with globular sets that have no cells
above dimension \(n\). Under this identification, we see that the \(n\)\=/globe
and the \(n\)\=/sphere are also \(n\)\=/globular sets, and that the suspension
and wedge sum restrict to operations
\begin{align}\label{eq:truncated-supsension-wedge}
    \Susp&\colon\gSet_n\to \gSet_{1+n} &
    \vee&\colon(\Sphere^0\downarrow \gSet_n)\times(\Sphere^0\downarrow \gSet_n)
        \to \Sphere^0\downarrow \gSet_n
\end{align}
for every \(n\in \Nat\cup\set{-1,\omega}\) where we let \(1+\omega = \omega\).

\subsection{Pasting diagrams}

\noindent
Pasting diagrams are a family of globular sets that plays an important role in
many approaches to higher categories, including the weak \(\omega\)\=/categories
we study in this paper and Rezk's
\(\Theta_n\)\=/spaces~\cite{rezkCartesianPresentationWeak2010}. These are the
globular sets that famillialy represent the free strict \(\omega\)\=/category
monad, and they have been described in terms of rooted, planar trees by
Batanin~\cite{bataninMonoidalGlobularCategories1998}, in terms of lists by
Leinster~\cite{leinsterHigherOperadsHigher2003}, and as certain colimits of
globes by Maltsiniotis~\cite{maltsiniotisGrothendieck$infty$groupoidsStill2010}
and Ara~\cite{ara$infty$groupoidesGrothendieckVariante2010}. In this paper, we
will take the approach of Dean et
al~\cite{deanComputadsWeak$omega$categories2024} and describe them as
inductive structures.

\begin{definition}\label{def:trees}
    The set \(\Tree\) of \emph{Batanin trees} is defined \emph{inductively} by
    the existence of a tree \(\br L\) for every list of Batanin trees
    \(L = [B_1,\dots,B_n]\).
\end{definition}

\noindent
What we mean by a structure being defined inductively is that it is the carrier
of the initial algebra for some polynomial endofunctor defined in terms of some
\emph{constructors}. In this particular example, we have a single constructor
\begin{align}\label{eq:br-trees}
    \br&\colon \List \Tree \to \Tree & \List X = \coprod_{n\in \Nat} X^n
\end{align}
taking a list of trees \([B_1,\dots,B_n]\) to the rooted tree with branches the
trees \(B_i\), so \(\Tree\) is the carrier of the initial algebra for the free
monoid endofunctor \(\List\) on \(\Set\), and \(\br\) is its structure morphism.
This inductive description allows us to define operations on Batanin trees
recursively:

\begin{definition}\label{def:dim-bdry-tree}
    The \emph{dimension} of a Batanin tree is the natural number defined by the
    recursive formula
    \begin{equation}\label{eq:dim-tree}
        \dim(\br[B_1,\dots,B_n]) = \sup\set{\dim(B_1) + 1, \dots, \dim(B_n) + 1}
    \end{equation}
    The \emph{\(k\)\=/boundary} of a Batanin tree for \(k\in \Nat\) is the
    tree defined recursively by
    \begin{equation}\label{eq:bdry-tree}
        \begin{aligned}
        \partial_0(\br[B_1,\dots,B_n])
            &= \br[] \\
        \partial_{k+1}(\br[B_1,\dots,B_n])
            &= \br[\partial_k B_1,\dots,\partial_k B_n]
        \end{aligned}
    \end{equation}
\end{definition}

\noindent
The dimension of a Batanin tree is the height of the corresponding rooted planar
tree, that is the maximum of the distances from the root to the leaves. The
\(k\)\=/boundary operation acts on a tree by removing all nodes whose distance
from the root is greater than \(k\), as illustrated in
Figure~\ref{fig:tree-bdry}.

\begin{figure}
    \centering
    \begin{tabular}{|c|ccc|}
        \hline
        & List & Planar rooted tree & positions \\
        \hline
        \(B\) &
        \(\br[\br[\br[],\br[]],\br[]]\) &
        \(
        \begin{tikzcd}[column sep = tiny, row sep = small]
            \bullet && \bullet \\
            & \bullet && \bullet \\
            && \bullet
            \arrow[no head, from=2-2, to=1-1]
            \arrow[no head, from=2-2, to=1-3]
            \arrow[no head, from=3-3, to=2-2]
            \arrow[no head, from=3-3, to=2-4]
        \end{tikzcd}
        \) &
        \(
        \begin{tikzcd}[column sep=small]
            \bullet && \bullet && \bullet
            \arrow[""{name=0, anchor=center, inner sep=0}, from=1-1, to=1-3]
            \arrow[""{name=1, anchor=center, inner sep=0}, bend left = 60,
                from=1-1, to=1-3]
            \arrow[""{name=2, anchor=center, inner sep=0}, bend right = 60,
                from=1-1, to=1-3]
            \arrow[from=1-3, to=1-5]
            \arrow[shorten <=3pt, shorten >=3pt, Rightarrow, from=1, to=0]
            \arrow[shorten <=3pt, shorten >=3pt, Rightarrow, from=0, to=2]
          \end{tikzcd}
        \)
        \\
        \hline
        \(\partial_1 B\) &
        \( \br[\br[], \br[]] \) &
        \(
          \begin{tikzcd}[column sep = tiny, row sep = small]
            \bullet && \bullet \\
            & \bullet
            \arrow[no head, from=2-2, to=1-1]
            \arrow[no head, from=2-2, to=1-3]
          \end{tikzcd}
        \) &
        \(
          \begin{tikzcd}[column sep=small]
            \bullet && \bullet && \bullet
            \arrow[from=1-1, to=1-3]
            \arrow[from=1-3, to=1-5]
          \end{tikzcd}
        \)
        \\
        \hline
    \end{tabular}
    \caption{A Batanin tree and its boundary}\label{fig:tree-bdry}
\end{figure}

\begin{definition}\label{def:pos-src-tgt-inclusions}
    The \emph{pasting diagram} associated to a Batanin tree \(B\) is the
    globular set defined recursively by letting
    \begin{equation}
        \Pos (\br[B_1,\dots,B_n])
            = (\Susp\Pos B_1) \vee \dots \vee (\Susp\Pos B_n)
    \end{equation}
    The \emph{source} and \emph{target} maps
    \(\sigma_k^B,\tau_k^B\colon \Pos(\partial_k B)\to \Pos(B)\) are defined
    recursively by letting \(\sigma_0^B\) and \(\tau_0^B\) the morphisms out
    of \(\Disk^0\) corresponding to the first and second chosen \(0\)\=/cells
    of \(\Pos(B)\) respectively, and:
    \begin{equation}\label{eq:src-tgt-inclusions}
        \begin{aligned}
            \sigma_{k+1}^{\br[B_1,\dots,B_n]} &= (\Susp \sigma_k^{B_1}) \vee
                \dots \vee (\Susp \sigma_k^{B_n}) \\
            \tau_{k+1}^{\br[B_1,\dots,B_n]} &= (\Susp \tau_k^{B_1}) \vee
                \dots \vee (\Susp \tau_k^{B_n})
        \end{aligned}
    \end{equation}
\end{definition}

\noindent
Using observation~\eqref{eq:truncated-supsension-wedge} and recursion on the
Batanin tree \(B\), we can see that the pasting diagram \(\Pos(B)\) has a finite
number of cells, all of which have dimension at most \(\dim(B)\). Additionally,
we can check that the source and target maps satisfy analogues of the
globularity equations~\eqref{eq:coglobularity}, as shown for instance by Dean et
al.~\cite[Proposition~2.8]{deanComputadsWeak$omega$categories2024}. More
precisely, we have for \(k<m\) that \(\partial_k\partial_m B =\partial_k B\) and
\begin{align}
    \sigma_m^B\circ\sigma_k^{\partial_m B}
        &= \tau_m^B\circ\sigma_k^{\partial_m B} &
    \sigma_m^B\circ\tau_k^{\partial_m B}
        &= \tau_m^B\circ\tau_k^{\partial_m B}
\end{align}
and that \(\partial_k B = B\) whenever \(k\ge \dim(B)\) with
\(s_k^B = t_k^B = \id_{\Pos(B)}\) in that case.

\begin{definition}\label{def:linear-trees}
    The \emph{linear trees} are defined recursively on \(n\in\Nat\) by:
    \begin{align}\label{eq:def-linear-trees}
        D_0 &= \br[] & D_{n+1} &= \br[D_n]
    \end{align}
\end{definition}

\noindent
Unwrapping the definition, we can see that \(D_n\) is a Batanin tree of
dimension \(n\) and that its boundaries are given by
\(\partial_kD_{n} = D_k\) when \(k<n\). Using the natural isomorphisms
in~\eqref{eq:suspension-disk-sphere}, we can show that the pasting diagram
\(\Pos(D_n)\) is isomorphic to the \(n\)\=/globe \(\Disk^n\). Moreover,
these isomorphisms can be chosen natural, in that the following squares
commute:
\begin{align}\label{eq:linear-trees-src-tgt}
    \begin{tikzcd}[ampersand replacement = \&]
        {\Pos(D_n)} \& {\Disk^n} \\
        {\Pos(D_{n+1})} \& {\Disk^{n+1}}
        \arrow["\sim", from=1-1, to=1-2]
        \arrow["{\sigma_n^{D_{n+1}}}"', from=1-1, to=2-1]
        \arrow["{\sigma_n}", from=1-2, to=2-2]
        \arrow["\sim", from=2-1, to=2-2]
    \end{tikzcd} &&
    \begin{tikzcd}[ampersand replacement = \&]
        {\Pos(D_n)} \& {\Disk^n} \\
        {\Pos(D_{n+1})} \& {\Disk^{n+1}}
        \arrow["\sim", from=1-1, to=1-2]
        \arrow["{\tau_n^{D_{n+1}}}"', from=1-1, to=2-1]
        \arrow["{\tau_n}", from=1-2, to=2-2]
        \arrow["\sim", from=2-1, to=2-2]
    \end{tikzcd}
\end{align}

\begin{definition}\label{def:binary-trees}
    We define a family of trees \(P^k_n\) for every pair of natural
    numbers \(n > k\) recursively by the equations:
    \begin{align}\label{eq:binary-trees-def}
        P^0_{n+1} &= \br[D_n, D_n] &
        P^{k+1}_{n+1} &= \br[P^k_n]
    \end{align}
\end{definition}

\noindent
A recursive argument shows that \(P^k_n\) is a Batanin tree of
dimension \(n\) whose boundaries are given by
\begin{equation}\label{eq:binary-trees-bdry}
    \partial_m P^k_n = \begin{cases}
        D_m&\text{for }m\le k \\
        P^k_{m}&\text{for }k < m < n \\
        P^k_n&\text{for }n\le m
    \end{cases}
\end{equation}
The pasting diagram \(\Pos(P_n^k)\) fits in a pushout square of the form:
\begin{equation}\label{eq:binary-trees-po}
    \begin{tikzcd}
        {\Disk^k} & {\Disk^n} \\
        {\Disk^n} & {\Pos(P^k_n)}
        \arrow["{\sigma_k^n}", from=1-1, to=1-2]
        \arrow["{\tau_k^n}"', from=1-1, to=2-1]
        \arrow[dashed, from=1-2, to=2-2]
        \arrow[dashed, from=2-1, to=2-2]
        \arrow["\lrcorner"{anchor=center, pos=0.125, rotate=180},
                draw=none, from=2-2, to=1-1]
    \end{tikzcd}
\end{equation}
When \(k = 0\), this is the pushout defining the wedge
sum~\eqref{eq:wedge-sum}. For \(k>0\), it follows by recursion using that
the suspension preserves globes and connected colimits. It follows that
\(P^k_n\) represents pairs of \emph{\(k\)\=/composable \(n\)\=/cells}:
\begin{equation}\label{eq:binary-trees-represent}
    \gSet(\Pos(P^k_n), X)\cong
    \set{(a,b)\in X_n\times X_n\such\tgt_ka = \src_k b}
\end{equation}
Under this isomorphism, composition with the \(m\)\=/source of \(P^k_n\)
corresponds to the function sending \((a,b)\) to \(\src_m a\) when
\(m\le k\) and to \((\src_ma,\src_mb)\) otherwise. Composition with the
\(m\)\=/target works dually. These pasting diagrams for small values of
\(n\) are shown in Figure~\ref{fig:binary}.

\begin{figure}
    \begin{align*}
        \begin{tikzcd}[ampersand replacement = \&]
            \bullet \& \bullet \& \bullet
            \arrow[from=1-1, to=1-2]
            \arrow[from=1-2, to=1-3]
        \end{tikzcd} &&
        \begin{tikzcd}[ampersand replacement=\&]
            \bullet \& \bullet \& \bullet
            \arrow[""{name=0, anchor=center, inner sep=0},
                bend left, from=1-1, to=1-2]
            \arrow[""{name=1, anchor=center, inner sep=0},
                bend right, from=1-1, to=1-2]
            \arrow[""{name=2, anchor=center, inner sep=0},
                bend left, from=1-2, to=1-3]
            \arrow[""{name=3, anchor=center, inner sep=0},
                bend right, from=1-2, to=1-3]
            \arrow[shorten <=3pt, shorten >=3pt, Rightarrow, from=0, to=1]
            \arrow[shorten <=3pt, shorten >=3pt, Rightarrow, from=2, to=3]
        \end{tikzcd} &&
        \begin{tikzcd}[ampersand replacement=\&]
            \bullet \& \bullet
            \arrow[""{name=0, anchor=center, inner sep=0},
                bend left = 60, from=1-1, to=1-2]
            \arrow[""{name=1, anchor=center, inner sep=0},
                bend right = 60, from=1-1, to=1-2]
            \arrow[""{name=2, anchor=center, inner sep=0}, from=1-1, to=1-2]
            \arrow[shorten <=3pt, shorten >=3pt, Rightarrow, from=0, to=2]
            \arrow[shorten <=3pt, shorten >=3pt, Rightarrow, from=2, to=1]
        \end{tikzcd}
    \end{align*}
\caption{The pasting diagrams \(P^1_0\), \(P^2_0\) and \(P^2_1\)}
\label{fig:binary}
\end{figure}


\section{Computads with invertible generators}
\label{sec:definition}

Computads, or polygraphs, were originally introduced in the context of
\(2\)\=/categories by Street~\cite{streetLimitsIndexedCategoryvalued1976}, and
then generalised to strict \(\omega\)\=/categories independently by
Street~\cite{streetAlgebraOrientedSimplexes1987} and
Burroni~\cite{burroniHigherdimensionalWordProblems1993}. For a recent exposition
of computads for strict \(\omega\)\=/categories, we refer to the book by Ara
et al.~\cite{araPolygraphsRewritingHigher2025}. The theory of computads was
further generalised by Batanin~\cite{bataninComputadsFinitaryMonads1998} to the
setting of weak \(\omega\)\=/categories and it was recently shown by Dean et
al.~\cite{deanComputadsWeak$omega$categories2024} that computads and the free
weak \(\omega\)\=/categories they generate can be described in an indexed
inductive-recursive manner.

In this section, we extend this approach by providing a similar indexed
inductive-recursive definition of \emph{computads with invertible generators}.
We proceed by ordinary recursion on the dimension
\(n\in\Nat\cup\set{-1,\omega}\) to define the following structures:
\begin{enumerate}[label=(\Alph*)]
    \item a category \(\iComp_n\) of \emph{\(n\)\=/computads with invertible
        generators},\label{item:iComp}
    \item a truncation functor \(\tr^n_{k}\colon \iComp_n\to \iComp_k\)
        for \(k<n\) removing the generators above dimension \(k\),
        \label{item:tr}
    \item a functor \(\iCell_n\colon\iComp_n\to\gSet_n\) returning the
        \emph{cells} of a computad, \label{item:iCell}
    \item a functor \(\iCptd_n\colon\gSet_n\to\iComp_n\) including globular sets
        to computads,\label{item:iCptd}
    \item a natural transformation
        \(\unitCell_n\colon\id\Rightarrow\iCell_n\iCptd_n\) picking the
        \emph{generators},\label{item:unitCell}
    \item a natural transformation
        \(\counitCell_n\colon\iCptd_n\iCell_n\Rightarrow\id\) evaluating cells,
        \label{item:counitCell}
    \item a subset \(\Full_n(B)\subseteq\Par_n(\iCell_n(\iCptd_n(\tr_n
        \Pos(B))))\) of \emph{full spheres} of a Batanin tree \(B\) for
        \(n<\omega\),\label{item:Full}
    \item a functor \(\iSet_n\colon\iComp_n\to\Set\) choosing some cells that
        will be made invertible for \(n<\omega\),
        \label{item:iSet}
    \item three natural transformations
        \(\fwd_n,\linv_n,\rinv_n\colon \iSet_n\Rightarrow\iCell_n(-)_n\)
        selecting
        an invertible \(n\)\=/cell, and its left and right inverse respectively
        for \(-1<n<\omega\),
        \label{item:fwd-linv-rinv}
    \item a natural \emph{binary composition} operation \(\compCell_k^n\) for
        \(0\le k<n<\omega\) defined on pairs of cells \((a,b)\in \iCell_n(C)\)
        such that \(\tgt_ka = \src_kb\),
        \label{item:bin-comp}
    \item a natural \emph{identity operation}
        \(\idCell_n\colon \iCell_n(C)_{n-1}\to
        \iCell_n(C)_n\) for \(0<n<\omega\)
        \label{item:idCell}
\end{enumerate}
Together with the definition of these structures, we will provide proofs for the
following properties:
\begin{enumerate}[label=(\roman*)]
    \item The truncation functors satisfy that \(\tr^m_k\tr^n_m = \tr^n_k\) for
        all \(k<m<n\).\label{item:prop-cocycle}
    \item The functor \(\iCptd_n\) is left adjoint to \(\iCell_n\) with unit
        \(\unitCell_n\) and counit \(\counitCell_n\), that is, the following
        triangle identities hold:\label{item:prop-adjunction}
        \begin{equation}
            \begin{aligned}\label{eq:trianglesCell}
                (\iCell_n\counitCell_n)\circ (\unitCell_n\iCell_n)
                    &= \id_{\iCell_n} \\
                (\counitCell_n\iCptd_n)\circ (\iCptd_n\unitCell_n)
                    &= \id_{\iCptd_n}
            \end{aligned}
        \end{equation}
    \item The adjunction commutes with the truncation functors, in that the
        following squares commute:
        \begin{align}\label{eq:truncation-cell}
            \begin{tikzcd}[ampersand replacement = \&]
                {\gSet_n} \& {\iComp_n} \\
                {\gSet_m} \& {\iComp_m}
                \arrow["{\iCell_n}", from=1-1, to=1-2]
                \arrow["{\tr^n_m}"', from=1-1, to=2-1]
                \arrow["{\tr^n_m}", from=1-2, to=2-2]
                \arrow["{\iCell_m}"', from=2-1, to=2-2]
            \end{tikzcd} &&
            \begin{tikzcd}[ampersand replacement = \&]
                {\iComp_n} \& {\gSet_n} \\
                {\iComp_m} \& {\gSet_m}
                \arrow["{\iCptd_n}", from=1-1, to=1-2]
                \arrow["{\tr^n_m}"', from=1-1, to=2-1]
                \arrow["{\tr^n_m}", from=1-2, to=2-2]
                \arrow["{\iCptd_m}"', from=2-1, to=2-2]
            \end{tikzcd}
        \end{align}
        and the following equalities between natural transformations hold:
        \begin{align}\label{eq:truncation-unit}
            \tr^n_m \unitCell_n &= \unitCell_m \tr^n_m &
            \tr^n_m \counitCell_n &= \counitCell_m \tr^n_m
        \end{align}
        \label{item:prop-tr-adjunction}
    \item The source and target of the natural transformations \(\fwd_n\),
        \(\linv_n\) and \(\rinv_n\) are related by
        \begin{equation}\label{eq:src-tgt-fwd-inv}
            \begin{aligned}
                \src_{n-1}\linv_n &=\src_{n-1}\rinv_n = \tgt_{n-1}\fwd_n \\
                \tgt_{n-1}\linv_n &=\tgt_{n-1}\rinv_n = \src_{n-1}\fwd_n \\
            \end{aligned}
        \end{equation}
        \label{item:prop-src-tgt-fwd-linv-rinv}
    \item The source and target of the composition operations are determined by:
        \begin{equation}\label{eq:src-tgt-id-comp}
            \begin{aligned}
                \src_{n-1} \idCell_n(c)
                    &= \tgt_{n-1} \idCell_n(c) = \id \\
                \src_{n-1}(a \compCell_k^n b)
                    &= \begin{cases}
                        \src_{n-1}(a), & \text{if }k=n-1 \\
                        \src_{n-1}(a) \compCell_k^n \src_{n-1}(b)
                            & \text{if }k<n-1 \\
                    \end{cases} \\
                \tgt_{n-1}(a \compCell_k^n b)
                    &= \begin{cases}
                        \tgt_{n-1}(b), & \text{if }k=n-1 \\
                        \tgt_{n-1}(a) \compCell_k^n \tgt_{n-1}(b)
                            & \text{if }k<n-1 \\
                    \end{cases}
            \end{aligned}
        \end{equation}
        \label{item:prop-src-tgt-comp-idcell}
    \item The following \(n\)\=/sphere of \(D_k\) for \(n \le k < \omega\)
        is well-defined and full: \label{item:prop-full}
        \begin{equation}\label{eq:idCell-bdry}
            A^{\id}_{n,k} = \begin{cases}
                (\gen_n(\id_n), \gen_n(\id_n)), &\text{if } k = n \\
                (\gen_n(\sigma_n), \gen_n(\tau_n)), &\text{if } k > n
            \end{cases}
        \end{equation}
        The following \(n\)\=/sphere of \(P^k_m\) for \(n\le m < \omega\) and
        \(k<m\) is well-defined and full:
        \begin{equation}\label{eq:compCell-bdry}
            A^{\comp}_{n,m,k} = \begin{cases}
                (\gen_n(\sigma_n^{P^k_m}), \gen_n(\tau_n^{P^k_m})),
                    &\text{if } n\le k \\
                (\iCell\iCptd(\sigma_n^{P^k_m})(c),
                \iCell\iCptd(\tau_n^{P^k_m})(c)),
                &\text{if } k < n
            \end{cases}
        \end{equation}
        for \(c = a \compCell^n_k b\) the composite of the two top-dimensional
        cells of \(P^k_n\).
\end{enumerate}

\subsection{Base cases}
\label{subsec:base-cases}

\noindent
We first describe the cases \(n = -1\) and \(n = 0\) explicitly. We start
by letting \({\iComp_{-1} = \gSet_{-1} = \star}\) be the terminal category,
\(\iCptd_{-1}\) and \(\iCell_{-1}\) be identity functors, and
\(\unitCell_{-1}\) and \(\counitCell_{-1}\) be identity natural transformations.
We let \(\Full_{-1}(B)\) contain the unique \((-1)\)\=/sphere of \(\Pos(B)\).
We let also \(\iSet_{-1}\) take the unique \((-1)\)\=/computad to the empty set.

We then define \({\iComp_{0} = \gSet_{0} \cong \Set}\) and let \(\tr^0_{-1}\)
the
unique functor in the terminal category. We let \(\iCptd_0\) and \(\iCell_0\) be
identity functors, and \(\unitCell_{0}\) and \(\counitCell_{0}\) be identity
natural transformations again. We let \((\sigma_0^B,\tau_0^B)\) be the unique
full \(0\)\=/sphere of \(\Pos(B)\) for every Batanin tree \(B\). We finally
define \(\iSet_0\) to be constant at the empty set, since only
positive-dimensional cells may be inverted, and we let \(\fwd_n\), \(\linv_n\)
and \(\rinv_n\) the unique morphisms out of the empty set.

\subsection{Successor case}
\label{subsec:inductive-step}

\noindent
For the successor step of the recursive definition, we fix a natural number
\(n\in\Nat\) and we assume that the structures above have been defined for \(n\)
and they satisfy the given properties. We then proceed to define them for
\(n+1\).

\paragraph{Computads~\ref{item:iComp} and their truncations~\ref{item:tr}}
An \emph{\((n\!+\!1)\)\=/computad with invertible generators} \(C\) is a tuple
consisting of the following structures:
\begin{itemize}
    \item an \(n\)\=/computad with invertible generators \(C_n\in\iComp_n\),
    \item a pair of sets \(V_{n+1}^C\) and \(U_{n+1}^C\),
    \item a function
        \(\phi_{n+1}^C\colon V_{n+1}^C\to \Par_n(\iCell_n(C_n))\),
    \item a function
        \(\psi_{n+1}^C\colon U_{n+1}^C\to \Par_n(\iCell_n(C_n))\)
\end{itemize}
We call the elements of \(V_{n+1^C}\) and \(U_{n+1}^C\) the \emph{ordinary
generators} and \emph{invertible generators} of \(C\) respectively, and we call
the functions \(\phi_{n+1}^C\) and \(\psi_{n+1}^C\) that assign a source and
target cell to each generator, the \emph{attaching functions}. The truncation
functors \(\tr^{n+1}_n\) sends a computad \(C\) to  \(C_n\).

\paragraph{Invertibility set~\ref{item:iSet}}
The \emph{invertibility set} \(\iSet_{n+1}(C)\) for a computad with invertible
generators \(C\) is defined inductively by the following:
\begin{itemize}
    \item There is an element \(\igen_{n+1}(u)\!\in\!\iSet_{n+1}(C)\) for each
        \(u\!\in\! U_{n+1}^C\).
    \item There is an element \(\lunit_{n+1}(i)\!\in\!\iSet_{n+1}(C)\)
        for each \(i\!\in\!\iSet_n(C_n)\).
    \item There is an element \(\runit_{n+1}(i)\!\in\!\iSet_{n+1}(C)\)
        for each \(i\!\in\!\iSet_n(C_n)\).
\end{itemize}
Equivalently, the set \(\iSet_{n+1}(C)\) is the coproduct of \(U_{n+1}^C\) with
two copies of \(\iSet_n(C_n)\) and we have named the three coproduct inclusions
\(\igen_{n+1}\), \(\lunit_{n+1}\), and \(\runit_{n+1}\) respectively. We define
also an auxiliary function, assigning to each element of the invertibility set,
a source and target cell by letting:
\begin{equation}\label{eq:ibdry-def}
    \begin{aligned}
        \ibdry_{n,C}
            \colon \iSet_{n+1}(C)&\to \Par_n(\iCell_n(C_n)) \\
        \ibdry_{n,C}(\igen_{n+1}(u))
            &= \psi_{n+1}^C(u) \\
        \ibdry_{n,C}(\lunit_{n+1}(i))
            &= (\linv_n(i)*^n_{n-1}\fwd_n(i),\idCell_n(\tgt_{n-1}(\fwd_n(i))))\\
        \ibdry_{n,C}(\runit_{n+1}(i))
            &= (\fwd_n(i)*^n_{n-1}\rinv_n(i),\idCell_n(\src_{n-1}(\fwd_n(i))))
    \end{aligned}
\end{equation}
Since \(\iSet_0\) is constant on the empty set, the composition operation in
the last clauses are only used when \(n>0\), in which case
\(\compCell_{n-1}^n\) is defined. Moreover, the cells being composed are indeed
composable by Property~\ref{item:prop-src-tgt-fwd-linv-rinv}, and their
composites are parallel to the identity cells by
Property~\ref{item:prop-src-tgt-comp-idcell}.

\paragraph{Inclusion of globular sets~\ref{item:iCptd}}
Every \({(n+1)}\)\=/globular set \(X\) gives rise to an
\({(n+1)}\)\=/computad \(\iCptd_{n+1}(X)\) consisting of:
\begin{itemize}
    \item underlying \(n\)\=/computad \(\iCptd_n(\tr_n(X))\),
    \item set of ordinary generators \(V_{n+1}^{\iCptd X} = X_{n+1}\),
    \item no invertible generators \(U_{n+1}^{\iCptd X} = \emptyset\),
    \item attaching function \(\phi_{n+1}^{\iCptd X}\) given by the composite
        \begin{equation}\label{eq:def-attaching-icptd}
            \begin{tikzcd}[column sep = small ]
                & {\Par_n(\tr_n X)} \\
                {X_{n+1}} && {\Par_n(\iCell_n(\iCptd_n(\tr_n(X))))}
                \arrow[from=2-1,to=1-2, "{(\src_n,\tgt_n)}"]
                \arrow[from=1-2,to=2-3, "{\Par_n(\unitCell_{n,\tr_n X})}"]
                \arrow[from=2-1,to=2-3,dashed, "{\phi_{n+1}^{\iCptd X}}"]
            \end{tikzcd}
        \end{equation}
    \item attaching function \(\psi_{n+1}^{\iCptd X}\) the unique function out
        of the empty set.
\end{itemize}
It follows immediately from the definition and the inductive hypothesis that the
right square of~\eqref{eq:truncation-cell} commutes on objects for \(m = n\).

\paragraph{Top-dimensional cells~\ref{item:iCell} and
morphisms~\ref{item:iComp}}
The following part of the definition is where we use the type-theoretic
principle of \emph{induction-recursion}~\cite{
dybjerInductionRecursionInitial2003,hancockSmallInductionRecursion2013}.
This allows us to define simultaneously the set of top-dimensional cells of an
\((n+1)\)\=/computad \(C\) with an auxiliary \emph{boundary function}, and the
sets of morphisms with target \(C\) with their \(n\)\=/truncations as the
carrier of the initial algebra for a polynomial endofunctor on a category of
families of sets indexed by a class. Existence of the initial algebra will be
proven in Theorem~\ref{thm:initial-algebra-exists} below.

The set of top-dimensional \emph{cells} \(\iCell_{n+1}(C)_{n+1}\) of an
\((n+1)\)\=/computad \(C\) is defined inductively by the following five
constructors:
\begin{itemize}
    \item There exists a cell \(\gen_{n+1}(v)\) for every \(v\in V_{n+1}^C\).
    \item There exists a cell \(\fwd_{n+1}(i)\) for every
        \(i\in \iSet_{n+1}(C)\).
    \item There exists a cell \(\linv_{n+1}(i)\) for every
        \(i\in \iSet_{n+1}(C)\).
    \item There exists a cell \(\rinv_{n+1}(i)\) for every
        \(i\in \iSet_{n+1}(C)\).
    \item There exists a cell \(\coh_{B,A}[f]\) for every triple consisting of
        \begin{itemize}
            \item a Batanin tree \(B\) of dimension at most \(n+1\),
            \item a full sphere \(A\in \Full_n(B)\), and
            \item a morphism \(f\colon \iCptd_{n+1}(\Pos(B))\to C\).
        \end{itemize}
\end{itemize}
The first constructor embeds ordinary generators into cells, while the second
one embeds the invertible generators and the unitors of the inverses into cells.
The third and fourth constructors adjoint a left and right inverse to each one
of the invertible cells. Finally, the last constructor is responsible for the
composition operations of \(\omega\)\=/categories (when \(\dim B = n+1\)) and
their coherences (when \(\dim B \le n\)).

The source and target of these cells will be determined by the \emph{boundary
function} defined recursively by the following equations:
\begin{equation}
    \begin{aligned}
        \bdry_{n,C}\colon \iCell_{n+1}(C)_{n+1} &\to \Par_n(\iCell_n(C_n)) \\
        \bdry_{n,C}(\gen_{n+1}(v)) &= \phi_{n+1}^C(v) \\
        \bdry_{n,C}(\fwd_{n+1}(i)) &= \ibdry_{n,C}(i) \\
        \bdry_{n,C}(\linv_{n+1}(i)) &= \ibdry_{n,C}(i)^{\op} \\
        \bdry_{n,C}(\rinv_{n+1}(i)) &= \ibdry_{n,C}(i)^{\op} \\
        \bdry_{n,C}(\coh_{B,A}[f]) &= \Par_n(\iCell_n(\tr^{n+1}_n f))(A)
    \end{aligned}
\end{equation}
where \(\op\colon \Par_n(X)\to \Par_n(X)\) is the natural involution
swapping the order of the two elements of a pair of parallel cells.

The set of \emph{morphisms of computads} \(\iComp_{n+1}(D,C)\) is defined
inductively by letting a morphism \(f\colon D\to C\) be a triple consisting of:
\begin{itemize}
    \item a morphism of \(n\)\=/computads \(f_n\colon D_n\to C_n\)
    \item a function \(f_V\colon V_{n+1}^D\to \iCell_{n+1}(C)\)
    \item a function \(f_U\colon U_{n+1}^D\to \iSet_{n+1}(C)\)
\end{itemize}
subject to the commutativity of the following squares:
\begin{equation}\label{eq:def-morphism}
    \begin{aligned}
        \begin{tikzcd}[ampersand replacement=\&, column sep = 5.5em]
            {V_{n+1}^D} \& {\iCell_{n+1}(C)} \\
            {\Par_n(\iCell_n(D_n))} \& {\Par_n(\iCell_n(C_n))}
            \arrow["{f_V}", from=1-1, to=1-2]
            \arrow["{\phi_{n+1}^D}"', from=1-1, to=2-1]
            \arrow["{\bdry_{n,C}}", from=1-2, to=2-2]
            \arrow["{\Par_n(\iCell_n(f_n))}"', from=2-1, to=2-2]
        \end{tikzcd} \\
        \begin{tikzcd}[ampersand replacement=\&,column sep = 5.5em]
            {U_{n+1}^D} \& {\iSet_{n+1}(C)} \\
            {\Par_n(\iCell_n(D_n))} \& {\Par_n(\iCell_n(C_n))}
            \arrow["{f_U}", from=1-1, to=1-2]
            \arrow["{\psi_{n+1}^D}"', from=1-1, to=2-1]
            \arrow["{\ibdry_{n,C}}", from=1-2, to=2-2]
            \arrow["{\Par_n(\iCell_n(f_n))}"', from=2-1, to=2-2]
        \end{tikzcd}
    \end{aligned}
\end{equation}
The \(n\)\=/truncation of a morphism \(f\) is given by the first projection
\(\tr_n^{n+1}(f) = f_n\). This concludes the inductive-recursive portion of the
definition.

\paragraph{Functorial action of the invertibility set~\ref{item:iSet}}
A morphism of \((n+1)\)\=/computads \(f\colon D\to C\) gives rise to a function
on invertibility sets recursively by:
\begin{equation}
    \begin{aligned}
        \iSet_{n+1}(f) \colon \iSet_{n+1}(D) &\to \iSet_{n+1}(C) \\
        \iSet_{n+1}(f)(\igen_{n+1}(u)) &= f_U(u) \\
        \iSet_{n+1}(f)(\lunit_{n+1}(i)) &= \lunit_{n+1}(\iSet_n(f_n)(i)) \\
        \iSet_{n+1}(f)(\runit_{n+1}(i)) &= \runit_{n+1}(\iSet_n(f_n)(i))
    \end{aligned}
\end{equation}
The \(\ibdry_n\) functions are natural, that is, the following square commutes:
\begin{equation}\label{eq:ibdry-natural}
    \begin{tikzcd}[ampersand replacement=\&,column sep = 5.5em]
        {\iSet_{n+1}(D)} \& {\iSet_{n+1}(C)} \\
        {\Par_{n}(\iCell_n(C))} \& {\Par_{n}(\iCell_n(C))}
        \arrow["{\iSet_{n+1}(f)}", from=1-1, to=1-2]
        \arrow["{\ibdry_{n,D}}"', from=1-1, to=2-1]
        \arrow["{\ibdry_{n,C}}", from=1-2, to=2-2]
        \arrow["{\Par_n(\iCell_n(f_n))}"', from=2-1, to=2-2]
    \end{tikzcd}
\end{equation}
For invertible generators, this follows by commutativity of
the second square in~\eqref{eq:def-morphism}, while in the other two cases, it
follows by naturality of the composition operations and identities, as well as
that of of \(\fwd_n\), \(\linv_n\) and \(\rinv_n\).

\paragraph{Composition~\ref{item:iComp} and functorial action of
    cells~\ref{item:iCell}}
Let \(f\colon D\to C\) be a morphism of computads. The function
\(\iCell_{n+1}(f)_{n+1}\) on top-dimensional cells is defined mutually
recursively with the post-composition operation \(f\circ -\) together with a
proof that \(\bdry_{n}\) is natural and \(\tr_n^{n+1}\) is a functor. The former
is given by:
\begin{equation}\label{eq:iCell-morphisms}
    \begin{aligned}
        \iCell_{n+1}(f)_{n+1}\colon
            \iCell_{n+1}(D)_{n+1}&\to \iCell_{n+1}(C)_{n+1} \\
        \iCell_{n+1}(f)_{n+1}(\gen_{n+1}(v))
            &= f_V(v) \\
        \iCell_{n+1}(f)_{n+1}(\fwd_{n+1}(i))
            &= \fwd_{n+1}(\iSet_{n+1}(f)(i)) \\
        \iCell_{n+1}(f)_{n+1}(\linv_{n+1}(i))
            &= \linv_{n+1}(\iSet_{n+1}(f)(i)) \\
        \iCell_{n+1}(f)_{n+1}(\rinv_{n+1}(i))
            &= \rinv_{n+1}(\iSet_{n+1}(f)(i)) \\
        \iCell_{n+1}(f)_{n+1}(\coh_{B,A}[g])
            &= \coh_{B,A}[f\circ g]
    \end{aligned}
\end{equation}
where in the last clause we may assume that \(f\circ g\) has been defined by
structural recursion. To show that the following square commutes:
\begin{equation}\label{eq:bdry-natural}
    \begin{tikzcd}[ampersand replacement=\&,column sep = 5.5em]
        {\iCell_{n+1}(D)_{n+1}} \& {\iCell_{n+1}(C)_{n+1}} \\
        {\Par_{n}(\iCell_n(C))} \& {\Par_{n}(\iCell_n(C))}
        \arrow["{\iCell_{n+1}(f)_{n+1}}", from=1-1, to=1-2]
        \arrow["{\bdry_{n,D}}"', from=1-1, to=2-1]
        \arrow["{\bdry_{n,C}}", from=1-2, to=2-2]
        \arrow["{\Par_n(\iCell_n(f_n))}"', from=2-1, to=2-2]
    \end{tikzcd}
\end{equation}
we proceed recursively as well. For ordinary generators, this amounts to the
commutativity of the first square in~\eqref{eq:def-morphism}. For the three
cases involving the invertibility set, it follows by naturality of \(\ibdry\)
and of \(\op\). Finally, for the case \(\coh_{B,A}[f]\), we may assume further
that \(\tr_n\) preserves the composition \(f\circ g\) and use functoriality of
\(\Par_n\circ\iCell_n\).

To define the composition of \(f\) with a morphism \(g\colon E\to D\), we may
assume by structural induction that \(\iCell_{n+1}(f)_{n+1}\) has been defined
on cells of the form \(g_V(v)\) and that the naturality
square~\eqref{eq:bdry-natural} commutes on these cells. We then define
\begin{equation}\label{eq:def-composition}
    \begin{aligned}
        (f\circ g)_n &= f_n\circ g_n \\
        (f\circ g)_V(v) &= \iCell_{n+1}(f)_{n+1}(g_V(v)) \\
        (f\circ g)_U(u) &= \iSet_{n+1}(f)(g_U(u))
    \end{aligned}
\end{equation}
This is a well-defined morphism, in that the it makes~\eqref{eq:def-morphism}
commute, by the naturality assumption on \(\bdry_n\) and \(\ibdry_n\). The
first of the defining equations shows that \(tr^{n+1}_n\) preserves the
composition.

\paragraph{Associativity~\ref{item:iComp} and functoriality of
    ivertibility set~\ref{item:iSet} and cells~\ref{item:iCell}}
Suppose that a pair \(g\colon E\to D\) and \(f\colon D\to C\) of morphisms of
\((n+1)\)\=/computads are given. Then we can verify that
\begin{equation}\label{eq:iSet-functorial}
    \iSet_{n+1}(f\circ g) = \iSet_{n+1}(f) \circ \iSet_n(g)
\end{equation}
by cases. This equality of functions holds on invertible generators by
definition of composition~\eqref{eq:def-composition}, while it follows on left
and right units by the fact that \(\tr^{n+1}_n\) preserves composition.

The verification of the analogous equation for cells:
\begin{equation}\label{eq:iCell-functorial}
    \iCell_{n+1}(f\circ g)_{n+1} = \iCell_{n+1}(f)_{n+1} \circ \iCell_n(g)_{n+1}
\end{equation}
requires mutual induction with associativity of composition:
\begin{equation}\label{eq:iComp-assoc}
    (f\circ g) \circ h = f\circ (g\circ h)
\end{equation}
Equation~\eqref{eq:iCell-functorial} holds on generator by the definition of
composition~\eqref{eq:def-composition}, while it holds on cells built using the
invertibility set by~\eqref{eq:iSet-functorial}. Finally, to show that it holds
for cells of the form \(\coh_{B,A}[h]\), we may assume by structural induction
that~\eqref{eq:iComp-assoc} holds for \(h\colon \iCptd_{n+1}(\Pos(B))\to E\).
Finally, to show~\eqref{eq:iComp-assoc} holds for arbitrary morphism
\(h\colon F\to E\), we may assume that~\eqref{eq:iCell-functorial} holds for
cells of the form \(h_V(v)\) for \(v\in V_{n+1}^F\), which
reduces~\eqref{eq:iComp-assoc} to a simple computation.

\paragraph{Identities~\ref{item:iComp} and functoriality of
    ivertibility set~\ref{item:iSet} and cells~\ref{item:iCell}}
Suppose now that an \(n+1\)\=/computad \(C\) is fixed. We define the identity on
\(C\) to be the triple \(\id_C = (\id_{C_n}, \gen_{n+1}, \igen_{n+1})\). It
follows from the definition that \(\iSet_{n+1}\) preserves the identities
and that \(f\circ \id_C = f\) for every morphism with target \(C\). To show that
the other unit law for identities and that \(\iCell_{n+1}(-)_{n+1}\) preserves
identities, one proceeds mutually inductively as in the previous paragraph. From
the work up to this point, we can conclude that \(\iComp_{n+1}\) is a
well-defined category and that \(\iCell_{n+1}(-)_{n+1}\), \(\tr^{n+1}_n\) and
\(\iSet_{n+1}\) are well-defined functors out of it.

\paragraph{Truncation functors~\ref{item:tr}}
We have already defined the functor \(\tr^{n+1}_n\). The remaining truncation
functors for \(m < n\) are defined by
\begin{equation}\label{eq:low-truncation}
    \tr_m^{n+1} = \tr^n_m \circ\tr^{n+1}_n
\end{equation}
These functors satisfy the compatibility condition of
Property~\ref{item:prop-cocycle} by the inductive assumption that this property
holds for \(n\).

\paragraph{Globular set of cells~\ref{item:iCell}}
The \((n+1)\)\=/globular set of cells \(\iCell_{n+1}(C)\) of an
\((n+1)\)\=/computad \(C\) consists of the inductively defined set above
together with the sets
\begin{equation}\label{eq:low-dim-cells}
    \iCell_{n+1}(C)_{m} = \iCell_n(C_n)_m
\end{equation}
for \(m\le n\). The source and target of an \(m\)\=/cell in \(\iCell_{n+1}(C)\)
are given by those in \(\iCell_{n}(C_n)\) for \(m\le n\), and they are given by
the boundary function \(\bdry_{n,C}\) for \(m = n+1\).

A morphism \(f\colon D\to C\) gives rise to a morphism of globular sets via the
recursively defined function of~\eqref{eq:iCell-morphisms} and the functions
comprising \(\iCell_n(f_n)\). This is a well-defined morphism of globular sets
by the naturality of \(\bdry_{n,C}\), and it is functorial.

This assignment clearly commutes with the truncation functor \(\tr^{n+1}_n\),
meaning that the left square of~\ref{eq:truncation-cell} commutes when \(m=n\).
To show that it also commutes for \(m<n\), we may use the inductive hypothesis
together with Property~\ref{item:prop-cocycle}. We note furthermore that by the
definition of \(\iCell_{n+1}(-)_{n+1}\) on morphisms, the constructors
\(\fwd_{n+1}\), \(\linv_{n+1}\) and \(\rinv_{n+1}\) become natural
transformations, as expected by~\ref{item:fwd-linv-rinv}. Moreover, these
transformations satisfy Property~\ref{item:prop-src-tgt-fwd-linv-rinv} by the
definition of the boundary natural transformations.

\paragraph{Functoriality of the inclusion of globular sets~\ref{item:iCptd}}
A morphism of \(({n+1})\)\=/globular sets \(f\colon X\to Y\) gives rise to a
morphism of computads \(\iCptd_{n+1}(f)\) by letting
\begin{equation}\label{eq:def-icptd-morphism}
    \begin{aligned}
        \iCptd_{n+1}(f)_n &= \iCptd_n(\tr^{n+1}_n f) \\
        \iCptd_{n+1}(f)_V &= \gen_{n+1}\circ\ f_{n+1} \\
        \iCptd_{n+1}(f)_U &=\ !
    \end{aligned}
\end{equation}
where \(!\) is the unique function out of the empty set. This is a well-defined
morphism, in that the squares of~\eqref{eq:def-morphism} commute, due to
naturality of the transformation \(\unitCell_n\). Unwrapping the definition of
composition, we get that \(\iCptd_{n+1}\) is a functor. Moreover,
by definition, \(\iCptd_{n+1}\) commutes with the truncation functor
\(\tr^{n+1}_{n}\). By the inductive hypothesis and
Property~\ref{item:prop-cocycle}, we may therefore conclude that the right
square of~\eqref{eq:truncation-cell} commutes for all \(m\le n\).

\paragraph{The unit~\ref{item:unitCell} and counit~\ref{item:counitCell}}
The unit of the adjunction \(\unitCell_{n+1}\) is the natural transformation
obtained by combining the natural transformation
\(\unitCell_n\circ\tr_n^{n+1}\) with the inclusion \(\gen_{n+1}\iCptd_{n+1}\)
of ordinary generators into cells. More precisely, its component on an
\((n+1)\)\=/globular set \(X\) is the morphism of \((n+1)\)\=/globular sets
\({\unitCell_{n+1,X}\colon X\to \iCell_{n+1}\iCptd_{n+1}(X)}\)
sending a cell \(x\in X_m\) to:
\begin{equation}\label{eq:def-unitCell}
    \unitCell_{n+1,X}(x) = \begin{cases}
        \gen_{n+1,\iCptd_{n+1}X}(x),& \text{if } m = n+1 \\
        \unitCell_{n,\tr_n X}(x),& \text{if } m \le n
    \end{cases}
\end{equation}
This is a well-defined morphism of globular sets by the definition of the
attaching function \(\phi_{n+1}^{\iCptd X}\) in~\eqref{eq:def-attaching-icptd}
and the boundary natural transformation. Its naturality follows from that
of \(\unitCell_n\), and the definition of \(\iCptd_{n+1}\) on
morphisms~\eqref{eq:def-icptd-morphism}.

The counit of the adjunction \(\counitCell_{n+1}\) is the natural transformation
obtained by combining \(\counitCell_n\circ\tr_n^{n+1}\) together with the
identity of the set of cells. More precisely, it sends an \((n+1)\)\=/computad
\(C\) to the morphism of \((n+1)\)\=/computads
\(\counitCell_{n+1,C}\colon \iCptd_{n+1}\iCell_{n+1}(C)\to C\) given by:
\begin{equation}\label{eq:def-counitCell}
    \begin{aligned}
        (\counitCell_{n+1,C})_n &= \counitCell_{n,C_n} \\
        (\counitCell_{n+1,C})_V &= \id_{\iCell_{n+1}(C)_{n+1}} \\
        (\counitCell_{n+1,C})_V &=\ !
    \end{aligned}
\end{equation}
where again \(!\) is the unique function out of the empty set. To show that this
is a well-defined morphism of computads, we need to see that the exterior of the
following diagram commutes:
\begin{equation}\label{eq:def-counitCell-welldef}
    \begin{tikzcd}[column sep = small, cramped]
        &[-4em] {V_{n+1}^{\iCptd\iCell C}} & {\iCell_{n+1}(C)} \\
        &[-4em] {\Par_n\iCell_n(C_n)} & {\Par_n(\iCell_n(C_n))}\\
        \\
        {\Par_n(\iCell_n(\iCptd_n(\iCell_n(C_n))))}
        \arrow[equals, from=1-2, to=1-3]
        \arrow["{\bdry_{n,C}}"', from=1-2, to=2-2]
        \arrow["{\phi_{n+1}^{\iCptd\iCell C}}"',
            bend right = 40, from=1-2, to=4-1]
        \arrow["{\bdry_{n,C}}", from=1-3, to=2-3]
        \arrow["{\Par_n(\unitCell_{n,\iCell_nC_n})}"{description},
            from=2-2, to=4-1]
        \arrow[equals, from=2-2, to=2-3]
        \arrow["{\Par_n(\iCell_n(\counitCell_{n,C_n}))}"{swap}, bend right = 10,
            from=4-1, to=2-3]
    \end{tikzcd}
\end{equation}
The left triangle commutes by definition of the attaching map, while the bottom
one commutes by the triangle equations~\eqref{eq:trianglesCell}, from which we
get commutativity of the whole diagram.

By construction, both the unit and the counit commute with the truncation
functors \(\tr^{n+1}_n\), so by the inductive hypothesis and
Property~\ref{item:prop-cocycle}, we see that the
equations~\eqref{eq:truncation-unit} hold for all \(m\le n\). This concludes the
proof of Property~\ref{item:prop-tr-adjunction}. To show
Property~\ref{item:prop-adjunction}, it remains to check that the triangle
identities~\eqref{eq:trianglesCell} are satisfied and that \(\counitCell_{n+1}\)
is natural. By compatibility with the truncation functors and the fact that the
trinagle equations hold one dimension below, the first triangle
identity amounts to showing for every \((n+1)\)\=/computad \(C\) and every
cell \(c\in \iCell_{n+1}(C)_{n+1}\) that
\begin{equation}\label{eq:trianglesCell-first}
    \iCell_{n+1}(\counitCell_{n+1,C}) \circ \unitCell_{n+1,\iCell_{n+1}C}(c) =
    (\counitCell_{n+1,C})_V(c) = c
\end{equation}
which follows from the definition of the unit and the counit. Similarly, the
second triangle identity amounts to showing for every \((n+1)\)\=/truncated
globular set \(X\) and every \(x\in X_{n+1}\) that:
\begin{equation}\label{eq:trianglesCell-second}
    (\counitCell_{n+1,\iCptd_{n+1}X}\circ\iCptd_{n+1}(\unitCell_{n+1,X}))_V(x)
        = \gen_{n+1}(x)
\end{equation}
which follows from equation~\eqref{eq:trianglesCell-first} for
\(C = \iCptd_{n+1}X\) and \(c = \gen_{n+1}(x)\). Finally, for the same reason,
to check naturality of the counit with respect to a morphism \(f\colon D\to C\),
it suffices to show for every \(d\in \iCell_{n+1}(D)_{n+1}\) that
\begin{equation}\label{eq:counitCell-naturality}
    \iCell_{n+1}(\counitCell_{n+1,C})(\gen_{n+1}(\iCell_{n+1}(f)(d)))
        = \iCell_{n+1}(f)(d)
\end{equation}
which is again an instance of~\eqref{eq:trianglesCell-first} by letting
\(c = \iCell_{n+1}(f)(d)\).

\paragraph{Fullness~\ref{item:Full}}
To define when a pair of parallel cells is \emph{full}, we first introduce the
notion of \emph{support} of invertible cells and of cells respectively. The
support of an invertible cell of \(C\) is a subset of the invertible generators
of \(C\) defined recursively by:
\begin{equation}\label{eq:def-isupp}
    \begin{aligned}
        \isupp_{n+1}\colon \iSet_{n+1}(C)&\to \PSet(U_{n+1}^C) \\
        \isupp_{n+1}(\igen_{n+1}(u)) &= \set{u} \\
        \isupp_{n+1}(\lunit_{n+1}(i)) &= \emptyset \\
        \isupp_{n+1}(\runit_{n+1}(i)) &= \emptyset
    \end{aligned}
\end{equation}
The support of an arbitrary cell is the subset of all generators of \(C\)
defined recursively by:
\begin{equation}\label{eq:def-supp}
    \begin{aligned}
        \supp_{n+1}\colon \iSet_{n+1}(C)&\to \PSet(U_{n+1}^C \amalg V_{n+1}^C) \\
        \supp_{n+1}(\gen_{n+1}(v)) &= \set{v} \\
        \supp_{n+1}(\fwd_{n+1}(i)) &= \isupp_{n+1}(i) \\
        \supp_{n+1}(\linv_{n+1}(i)) &= \isupp_{n+1}(i) \\
        \supp_{n+1}(\rinv_{n+1}(i)) &= \isupp_{n+1}(i) \\
        \supp_{n+1}(\coh_{B,A}[f])
            &= \bigcup_{p\in \Pos_{n+1}(B)} \supp_{n+1}(f_V(v))
    \end{aligned}
\end{equation}
We may then define an \((n+1)\)\=/sphere \((a,b)\) of a Batanin tree \(B\) to
be \emph{full} when:
\begin{itemize}
    \item \(\supp_{n+1}(a)
        = \set{\sigma_{n+1}^B(p) \such p\in \Pos_{n+1}(\partial_{n+1}B)}\),
    \item \(\supp_{n+1}(b)
        = \set{\tau_{n+1}^B(p) \such p\in \Pos_{n+1}(\partial_{n+1}B)}\), and
    \item the \(n\)\=/sphere \(\bdry_{n}(a) = \bdry_n(b)\) is full.
\end{itemize}
As pointed out by Dean et al.~\cite{deanComputadsWeak$omega$categories2024},
this fullness condition derives from the admissibility condition of
Maltsiniotis~\cite{maltsiniotisGrothendieck$infty$groupoidsStill2010}. The cell
\(\coh_{B,A}[\id]\) is the liftings of the admissible pair corresponding to
the sphere \(A\).

\paragraph{Identity operations~\ref{item:idCell}}
Using the adjunction \(\iCptd_{n+1}\dashv \iCell_{n+1}\) and the Yoneda
lemma, we get a natural isomorphism
\begin{equation}\label{eq:fin-disks-represent-cells}
    \chi_{\bullet}\colon\iCell_{n+1}(C)_k \xlongrightarrow{\sim}
        \iComp_{n+1}(\iCptd_{n+1}(\Pos(D_k)), C)
\end{equation}
for all \((n+1)\)\=/computads \(C\) and \(k\le n+1\). Using this isomorphism and
the full spheres of Property~\ref{item:prop-full}, we may define for every
\(c\in\iCell_{n+1}(C)_n\),
\begin{equation}\label{eq:idCell-def}
    \idCell_{n+1}(c) = \coh_{D_n, A^{\id}_{n,n}}[\chi_c]
\end{equation}
Naturality of the isomorphism \(\chi_\bullet\) shows that the identity operation
is natural, in that for every \(f\colon C\to D\),
\begin{equation}\label{eq:idCell-nat}
    \iCell_{n+1}(f)(\idCell_{n+1}(c)) = \idCell_{n+1}(\iCell_{n+1}(f)(c))
\end{equation}
Property~\ref{item:prop-src-tgt-comp-idcell} about the boundary of identity
cells follows by evaluation at the cell \(\gen_{n+1}(\id_{k})\) being the
inverse of the isomorphism \(\chi_\bullet\).
For Property~\ref{item:prop-full}, we note that the sphere \(A_{n+1,k}^{\id}\)
of \(D_k\) for \(n+1\le k\) are well-defined with both their cells having
boundary \(A_{n,k}^{\id}\). Fullness of these spheres can be checked immediately
from the definition of fullness and naturality of the isomorphisms
\(\Pos(D_n)\cong\Disk^n\) shown in~\eqref{eq:linear-trees-src-tgt}.

\paragraph{Composition operations~\ref{item:bin-comp}}
Using again the same adjunction and the isomoprhisms
in~\ref{eq:binary-trees-represent}, we get for all \(k < n+1\) a natural
isomorphism \(\chi^k_{\bullet,\bullet}\) between pairs of \(k\)\=/composable
\((n+1)\)\=/cells and morphisms out of \(\iCptd_{n+1}(P_{n+1}^k)\). We may
thus define the \(k\)\=/composable operation by
\begin{equation}\label{eq:compCell-def}
    a\compCell^k_{n+1} b = \coh_{P^k_{n+1}, A^{\comp}_{n,n,k}}[\chi^k_{a,b}]
\end{equation}
This is well-defined and has the source and target claimed by
Property~\ref{item:prop-src-tgt-comp-idcell}, since the inverse of
\(\chi^k_{\bullet,\bullet}\) is given by evaluation at the two top-dimensional
cells of \(P_n^k\). To finish the induction, it remains to show that the sphere
\(A_{n+1,m,k}^{\comp}\) of Property~\ref{item:prop-full} is well-defined and
full for all \(n+1\le m<\omega\) and \(k<m\). To show that it is well-defined,
we use Property~\ref{item:prop-src-tgt-comp-idcell} one dimension lower to
see that both cells comprising \(A_{n+1,m,k}^{\comp}\) have boundary
\(A_{n,m,k}^{\comp}\). To see that it is full, we further use that the support
of the \(a \compCell^{n+1}_k b\) in the notation of
Property~\ref{item:prop-full} contains exactly the two top-dimensional cells
of \(P_{n+1}^k = \partial_{n+1} P_m^k\).

\begin{remark}\label{rmk:poly-endo-ind-rec}
    Before moving to the limit case, we expand upon the
    inductive-recursive component of the definition, defining the cells of
    a computad \(C\) and morphisms with target \(C\). Since these are defined
    together with a boundary function and a truncation function, we understand
    the definition as specifying an object of the large category:
    \begin{equation}\label{eq:poly-endo-category}
        \mathcal{C} = \left(\Set \downarrow \Par_n(\iCell_n(C))\right)
        \times \prod_{D\in \iComp_{n+1}} (\Set \downarrow \iComp_n(D_n,C_n))
    \end{equation}
    whose objects \((X,f,(X_D,f_D))\) consist of a pair of a set \(X\) with
    a function \(f\colon X\to \Par_n(\iCell_n(C))\), and a family of sets
    \(X_D\) equipped with functions \(f_D\colon X_D\to \iComp_{n}(D_n,C_n)\)
    indexed by computads \(D\in \iComp_{n+1}\).

    The constructors for cells and morphisms
    specify extra structure that this object must possess, namely that is is an
    algebra for the endofunctor \(F\colon \mathcal{C}\to \mathcal{C}\) sending
    an object \((X,f,(X_D,f_D))\) as above to the object \((X',f',(X'_D,f'_D))\)
    defined below. The set \(X'\) is the coproduct:
    \begin{equation}\label{eq:poly-endo-set-cells}
            X' = V_{n+1}^C \amalg \left(\coprod_{i=1}^3 \iSet_{n+1}(C)\right)
            \amalg \left(\coprod_{\substack{\dim B \le n+1 \\ A\in \Full_n(B)}}
            X_{\iCptd_{n+1}(\Pos(B))}\right)
    \end{equation}
    On the first component, \(f'\) is defined to agree with \(\phi_{n+1}^C\).
    On the second comoponent, it is given by the functions \(\ibdry_{n,C}\),
    \(\op\ibdry_{n,C}\) and \(\op\ibdry_{n,C}\), which have already been
    defined. On the last component, it sends
    \(g\in X_{\iCptd_{n+1}(\Pos(B))}\) from the component \((B,A)\) to
    \(\Par_n(\iCell_{n+1}(f_{\iCptd_{n+1}(\Pos(B))}(g)))(A)\). The set \(X'_D\)
    consists of triples \((g_n,g_V,g_U)\) where:
    \begin{align}\label{eq:poly-endo-set-morphisms}
        g_n&\colon D_n\to C_n &
        g_V&\colon V_{n+1}^D\to X &
        g_U&\colon U_{n+1}^D\to \iSet_{n+1}(C)
    \end{align}
    subject to the commutativity of the following squares:
    \begin{equation}\label{eq:poly-endo-set-morphisms-contraints}
        \begin{aligned}
            \begin{tikzcd}[ampersand replacement=\&, column sep = 5.5em]
                {V_{n+1}^D} \& {X} \\
                {\Par_n(\iCell_n(D_n))} \& {\Par_n(\iCell_n(C_n))}
                \arrow["{g_V}", from=1-1, to=1-2]
                \arrow["{\phi_{n+1}^D}"', from=1-1, to=2-1]
                \arrow["{f}", from=1-2, to=2-2]
                \arrow["{\Par_n(\iCell_n(g_n))}"', from=2-1, to=2-2]
            \end{tikzcd} \\ \\
            \begin{tikzcd}[ampersand replacement=\&,column sep = 5.5em]
                {U_{n+1}^D} \& {\iSet_{n+1}(C)} \\
                {\Par_n(\iCell_n(D_n))} \& {\Par_n(\iCell_n(C_n))}
                \arrow["{g_U}", from=1-1, to=1-2]
                \arrow["{\psi_{n+1}^D}"', from=1-1, to=2-1]
                \arrow["{\ibdry_{n,C}}", from=1-2, to=2-2]
                \arrow["{\Par_n(\iCell_n(g_n))}"', from=2-1, to=2-2]
            \end{tikzcd}
        \end{aligned}
    \end{equation}
    This endofunctor is quite similar to the one appearing in the definition of
    computads by Dean et al.~\cite[Proposition~3.1]{
    deanComputadsWeak$omega$categories2024}. The only difference is the addition
    of the three copies of the invertibility set that jointly provide inverses
    for the invertible generators.

    Being an \emph{inductive} definition, we understand that the defined object
    is precisely the carrier of the initial algebra of the endofunctor \(F\),
    the existence of which we prove in the
    Theorem~\ref{thm:initial-algebra-exists}.
\end{remark}

\begin{theorem}\label{thm:initial-algebra-exists}
    The endofunctor \(F\colon \mathcal{C}\to \mathcal{C}\) of
    Remark~\ref{rmk:poly-endo-ind-rec} has an initial algebra.
\end{theorem}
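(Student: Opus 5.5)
We construct the initial algebra by transfinite iteration from the initial object of \(\mathcal{C}\), and show that the chain stabilises because \(F\) is finitary. This is Adámek's construction, as in the proof of Dean et al.~\cite[Proposition~3.1]{deanComputadsWeak$omega$categories2024}, whose endofunctor differs from ours only by the constant summand \(\coprod_{i=1}^3\iSet_{n+1}(C)\).

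\emph{Colimits in \(\mathcal{C}\).} The category \(\mathcal{C}\) is a (large) product of slice categories of \(\Set\). It therefore has an initial object, namely the empty set over each base, and colimits of chains, computed componentwise as in \(\Set\). The initial chain is defined by
\[
X^{(0)} = \text{initial}, \qquad X^{(k+1)} = F X^{(k)}, \qquad X^{(\omega)} = \operatorname{colim}_k X^{(k)},
\]
with the connecting maps \(X^{(0)}\to X^{(1)}\) unique and the later ones obtained by applying \(F\). By Adámek's theorem, it suffices to show that \(F\) preserves the colimit of this \(\omega\)\=/chain. Then \(X^{(\omega)}\cong F X^{(\omega)}\), and this isomorphism is the initial algebra.

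\emph{\(F\) preserves filtered colimits.} We check preservation componentwise. The summands \(V^C_{n+1}\) and the copies of \(\iSet_{n+1}(C)\) in \(X'\) are constant, and their maps to \(\Par_n(\iCell_n(C))\) do not depend on the argument. The remaining summand of \(X'\) is a coproduct of components \(X_D\) of the argument. Evaluation at a fixed index commutes with colimits, and the map \(f'\) there is defined componentwise, so it is compatible with the colimit.

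The key step is the morphism component \(X'_D\). It consists of triples \((g_n,g_V,g_U)\), where \(g_V\colon V^D_{n+1}\to X\) satisfies the first commuting square in~\eqref{eq:poly-endo-set-morphisms-contraints}. For an arbitrary \(D\) the set \(V^D_{n+1}\) may be infinite, so \(X'_D\) is not a finitary functor of \(X\) in general. Two routes handle this.

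\emph{Route (a): a larger regular cardinal.} For each \(D\), choose a regular cardinal \(\kappa_D\) exceeding \(|V^D_{n+1}|\). A \(\kappa_D\)\=/filtered colimit of sets commutes with \(\kappa_D\)\=/small limits, and \(X'_D\) is precisely such a limit: a pullback of \(\prod_{V^D_{n+1}} X\) over \(\prod_{V^D_{n+1}}\Par_n(\iCell_n(C))\), with \(g_n\) and \(g_U\) ranging over fixed sets. Since \(D\) ranges over a proper class, we may have to replace the class of all \(D\) by a set. The only components actually used by the cell component are those indexed by \(\iCptd_{n+1}(\Pos(B))\), and these form a set indexed by trees.

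\emph{Route (b): stratification.} This route avoids the size issue. First build the cells and the morphisms out of the finite computads \(\iCptd_{n+1}(\Pos(B))\) as the initial algebra of the restricted endofunctor on
\[
(\Set\downarrow\Par_n\iCell_n C)\times\prod_B(\Set\downarrow\iComp_n(\ldots)).
\]
This restricted endofunctor is finitary, because each \(V^{\Pos(B)}_{n+1}\) is finite and finite limits commute with filtered colimits in \(\Set\). The \(\omega\)\=/chain then gives its initial algebra. Afterwards, for general \(D\), define \(X_D\) directly as the set of triples with \(g_V\) valued in the cell set already constructed. Given these cells, this step is non-recursive, and it yields a fixed point, hence an algebra. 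To see that it is initial, take any algebra. It restricts to an algebra of the restricted functor, which gives a unique map on the tree-indexed part. The map then extends uniquely to each \(X_D\) by postcomposing \(g_V\). The algebra equations on the \(X_D\) force this extension, which gives uniqueness.

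\emph{Main obstacle.} The hardest step is exactly this mismatch: \(\mathcal{C}\) is indexed by a proper class of computads \(D\), some of which have infinitely many generators, so \(F\) itself is not finitary. I would take route (b), since it is cleaner and mirrors the well-founded structure, in which cells only recurse through morphisms out of the finite diagrams \(\Pos(B)\).

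Two routine checks remain. The first is that the boundary maps \(f'\) are well defined on the colimit. This holds because they are defined from \(\Par_n\iCell_n\) applied to the \(n\)\=/truncations \(g_n\), which are fixed data. The second is that \(\Full_n(B)\) and the indexing of trees form sets, which holds by construction.
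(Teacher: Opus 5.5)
Your route (b) is correct. It arrives at the same object as the paper, and it rests on the same key observation: the cell component of \(F\) only reads the components \(X_D\) for \(D = \iCptd_{n+1}(\Pos B)\), which have finitely many generators, while each component \(X'_D\) reads only the cell component.

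The paper does not pass to a subcategory. It runs Ad\'amek's chain on all of \(\mathcal{C}\) up to stage \(\omega+2\) and proves three things:
\begin{enumerate}
\item the cell component stabilises at \(\omega\), because coproducts commute with colimits;
\item the components indexed by computads with finitely many generators stabilise at \(\omega\), by compactness of finite sets;
\item consequently the connecting map \(c^{\omega+1}\to c^{\omega+2}\) is invertible.
\end{enumerate}
So the chain converges at \(\omega+1\), and initiality follows directly from Ad\'amek's theorem.

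You instead solve the finitary, set-indexed subsystem with the \(\omega\)-chain. You then define the remaining \(X_D\) by one non-recursive application of \(F\), identifying the tree-indexed ones via the structure isomorphism. Your fixed point is exactly the paper's \(c^{\omega+1}\).

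Each packaging has a cost and a benefit. The paper's version spares you the hand verification of initiality: restricting algebras and algebra maps, and checking that the extension to the \(X_D\) is forced. Your version makes the finitarity explicit and needs no stages past \(\omega\). It also does not use that the chain maps are monic, since finite limits commute with filtered colimits in \(\Set\) regardless.

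Route (a) is unnecessary. On its own it would not suffice, since no single regular cardinal bounds the \(\kappa_D\) over the proper class of computads. It is the stratification that removes the size problem, as you yourself noted.
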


\begin{proof}
    Following Adamek's construction~\cite{adamekFreeAlgebrasAutomata1974}, we
    define a transfinite sequence in \(c^\alpha =
    (X^\alpha,f^\alpha,(X_D^\alpha,f^\alpha_D))\in\mathcal{C}\) indexed by
    ordinals \(d\le \omega + 2\) by letting:
    \begin{align}\label{eq:initial-algebra-chain}
        c^0 &= \emptyset &
        c^{d+1} &= F(c^d) &
        c^\omega &= \colim_{d<\omega} c^d
    \end{align}
    We observe first that \((X^\omega,f^\omega)\cong
    (X^{\omega+1},f^{\omega+1})\) by commutation of coproducts with colimits:
    letting \(Y = V_{n+1}^C \amalg \coprod_{i=1}^3\iSet_{n+1}(C)\), we have
    that
    \begin{equation}
        \begin{split}
            X^{\omega+1}
                &= Y \amalg \left(
                    \coprod_{B,A}
                    X^\omega_{\iCptd_{n+1}(\Pos(B))}\right) \\
                &= Y \amalg \left(
                    \coprod_{B,A}
                    \colim_{d<\omega} X^d_{\iCptd_{n+1}(\Pos(B))}\right) \\
                &= \colim_{d<\omega} Y \amalg \left(
                    \coprod_{B,A}
                    X^d_{\iCptd_{n+1}(\Pos(B))}\right) \\
                &= \colim_{d<\omega} X^{(d+1)} = X^\omega
        \end{split}
    \end{equation}
    Unwrapping the definition of \(F\), we see that
    \((X_D^{\omega+1},f_D^{\omega+1}) \cong (X_D^{\omega+2},f_D^{\omega+2})\)
    for every \((n+1)\)\=/computad \(D\).

    We then observe that for every \((n+1)\)\=/computad \(D\) that has finitely
    many ordinary generators,
    \((X^{\omega}_D,f^{\omega}_D)\cong (X^{\omega+1}_D,f^{\omega+1}_D)\) by
    compactness of finite sets: the set of functions \(V_{n+1}^D\to X^\omega\)
    is precisely the colimit of the sets \(V_{n+1}^D\to X^d\) over all
    \(d<\omega\). Commutativity of \eqref{eq:poly-endo-set-morphisms-contraints}
    is preserved under this identification, since the inclusions
    \(X^d\to X^\omega\) into the colimit are monic. The latter follows by the
    morphisms \(X^d\to X^{d+1}\) being monic for \(d<\omega\), which can be
    checked by induction on \(d\).

    Specialising this isomorphism to computads of the form
    \(\iCptd_{n+1}(\Pos(B))\), we conclude that
    \((X^{\omega+1},f^{\omega+1})\cong
    (X^{\omega+2},f^{\omega+2})\). Combining all the isomorphisms we have shown
    so far, we get that \(c^{\alpha+1}\cong c^{\alpha+2}\). Therefore, the
    transfinite sequence we built terminates and \(c^{\alpha+1}\) is the
    carrier of the initial algebra of \(F\). The algebra structure is given by
    the isomorphism \(c^{\alpha+2} \cong c^{\alpha+1}\).
\end{proof}



\subsection{Limit case}
\label{subsec:infinite}

We may now assume that all structures have been defined for \(-1\le n < \omega\)
and that they satisfy the claimed properties. We then proceed to define these
structures for \(n = \omega\) as well. By abuse of notation, we often drop
\(\omega\) from the notation, speaking e.g. of computads instead of
\(\omega\)\=/computads.

\paragraph{Computads~\ref{item:iComp} and truncations~\ref{item:tr}}
The category of \emph{computads with invertible generators}
\(\iComp = \iComp_{\omega}\) is the limit of the categories \(\iComp_n\) for
\(n < \omega\) under the truncation functors:
\begin{equation}\label{eq:def-omega-computad}
    \begin{tikzcd}[row sep = large]
        \iComp \\
        \cdots & {\iComp_n} & \cdots & {\iComp_0} & {\iComp_{-1}}
        \arrow["{\tr_n}", from=1-1, to=2-2]
        \arrow["{\tr_0}", bend left=10, from=1-1, to=2-4]
        \arrow["{\tr_{-1}}", bend left=15, from=1-1, to=2-5]
        \arrow["{\tr_{n}^{n+1}}"', from=2-1, to=2-2]
        \arrow["{\tr_{n-1}^n}"', from=2-2, to=2-3]
        \arrow["{\tr^1_0}"', from=2-3, to=2-4]
        \arrow["{\tr^0_{-1}}"', from=2-4, to=2-5]
    \end{tikzcd}
\end{equation}
The truncation functors \(\tr_n = \tr^\omega_n\) are the projections out of the
limit - they satisfy Property~\ref{item:prop-cocycle} definitionally. In other
words, a computad with invertible generators \(C\) consists of
an \(n\)\=/computad with invertible generators \(C_n\) for every \(n\in\Nat\)
such that \(\tr^{n+1}_nC_{n+1} = C_n\). Morphisms \(f\colon C\to D\) consist
similarly of morphisms \(f_n \colon C_n \to D_n\) for all \(n\in\Nat\)
satisfying the same condition.

\paragraph{Cells~\ref{item:iCell} and globular sets~\ref{item:iCptd}}
The category of globular sets \(\gSet\) is similarly the (strict) limit of the
categories \(\gSet_n\) for \(n < \omega\). By functoriality of limits and by
Property~\ref{item:prop-tr-adjunction}, we obtain unique functors
\begin{align}\label{eq:omega-iCptd-iCell}
    \begin{aligned}
        \iCptd &\colon \gSet\to \iComp \\
        \iCptd(X)_n &= \iCptd_n(\tr_n X)
    \end{aligned}&&
    \begin{aligned}
        \iCell &\colon \iComp\to \gSet \\
        \iCell(C)_n &= \iCell_n(\tr_n C)_n
    \end{aligned}
\end{align}
compatible with the truncation functors, that is, also satisfying
Property~\ref{item:prop-tr-adjunction}.

\paragraph{The unit~\ref{item:unitCell} and counit~\ref{item:counitCell}}
Since the unit and counit of the adjunction \(\iCptd_n\dashv\iCell_n\) for
\(n< \omega\) are also compatible with the truncation functors, in the sense of
Property~\ref{item:prop-tr-adjunction}, they also lift uniquely to natural
transformations compatible with the truncations:
\begin{align}\label{eq:omega-unit-counit}
    \begin{aligned}
        \unitCell\colon \id&\Rightarrow \iCell\iCptd \\
        \unitCell_X(x) &= \unitCell_{n,\tr_n X}(x)
    \end{aligned} &&
    \begin{aligned}
        \counitCell\colon &\iCptd\iCell \Rightarrow \id \\
        \counitCell_C &= (\counitCell_{n,\tr_n C})_{n\in\Nat}
    \end{aligned} &&
\end{align}
By uniqueness of the lifts and Property~\ref{item:prop-adjunction}, we get
immediately that \(\unitCell\) and \(\counitCell\) satisfy the triangle
identities, so \(\iCptd\) is left adjoint to \(\iCell\) with unit \(\unitCell\)
and counit \(\counitCell\). This concludes the definition of computads with
invertible generators.

\subsection{Examples and properties}
\label{subsec:example-computads}

Before we study in details the monad defined by the adjunction
\(\iCptd \dashv \iCell\), we first give a few examples of computads with
invertible generators, and we present some properties of computads.

\begin{example}\label{ex:walking-equivalence}
    The \emph{walking equivalence} \(\Equiv^{n+1}\) for \(n\in\Nat\) is the
    \((n+1)\)\=/computad obtained by adjoining to \(\iCptd_{n}(\Sphere^{n})\) an
    invertible generator from the source to the target of the sphere. More
    precisely, we let
    \begin{equation}\label{eq:walking-equivalence}
        \Equiv^{n+1} =
        (\iCptd_{n}(\Sphere^{n}), \emptyset, \set{c_{n+1}}, !,
        \psi^{\Equiv}_{n+1})
    \end{equation}
    where \(\psi^{\Equiv}_{n+1}\) sends \(c_n\) to the parallel pair
    \((\gen_n(\sigma_n), \gen_n(\tau_n))\) and \(!\) denotes the unique morphism
    out of the empty set. We may
    illustate the computad \(\Equiv^{n+1}\) as follows, marking by \(\sim\)
    the invertible generators:
    \begin{align}\label{eq:walking-equivalence-fig}
    \Equiv^{1}
    &\qquad
        \begin{tikzcd}[ampersand replacement=\&]
        \bullet \ar[r,"\sim"] \& \bullet
        \end{tikzcd}
    & \Equiv^{2}
    &\qquad
        \begin{tikzcd}[ampersand replacement=\&]
        \bullet
        \ar[r, bend left = 40, ""{below, name=A}]
        \ar[r, bend right = 40, ""{above, name=B}]
        \& \bullet
        \ar[from = A, to = B, Rightarrow,"\sim"{sloped}]
        \end{tikzcd}
    \end{align}
    We can see that the computads \(\Equiv^{n+1}\) and
    \(\Cptd_{n+1}(\Disk^{n+1})\) differ only in that the unique top-dimensional
    generator of the former is invertible, so we may define a morphism from the
    latter to the former:
    \begin{equation}\label{eq:unmarked-to-marked}
        \diskInc_{n+1} = (\id,\diskInc_{n+1,V} , !)\colon
        \Cptd_{n+1}(\Disk^{n+1})\to \Equiv^{n+1}
    \end{equation}
    where \(\diskInc_{n+1,V}\) sends the unique top-dimensional generator of
    the source to the cell \(\fwd_{n+1}(\igen_{n+1}(c_n))\) corresponding to the
    unique top-dimensional generator of the target. Composing this morphism with
    the inclusion of the sphere into the disk, we get a morphism
    \begin{equation}\label{eq:equiv-inc}
        \equivInc_{n+1} = \diskInc_{n+1}\circ\iCptd_{n+1}(\sphereInc_{n+1}) =
        (\id,!, !)\colon \Cptd_{n+1}(\Sphere^{n})\to \Equiv^{n+1}
    \end{equation}
    that will be important in describing the weak \(\omega\)\=/categories
    generated by computads with invertible generators. Composing further with
    the source inclusion between disks, we get a morphism
    \begin{equation}\label{eq:trivial-cof}
        \trivCof_{n} = \diskInc_{n+1} \circ \iCptd_{n+1}(\sigma_n) \colon
        \iCptd_{n+1}(\Disk^n) \to \Equiv^{n+1}
    \end{equation}
    which is used to characterise equifibrations by Fujii et
    al.~\cite{fujii$o$equifibrationsStrictWeak2025}.
\end{example}

\begin{example}\label{ex:walking-triangle}
    The \emph{walking commutative triangle} \(\Triangle\) is the following
    \(2\)\=/computad with invertible generators:
    \begin{align}\label{eq:walking-triangle}
        \begin{aligned}
            \Triangle_{0}
                &= \set{x,y,z} \\
            \Triangle_{1}
                &= (\Triangle_{0},\set{f,g,h},\emptyset,\phi^{\Triangle}_{1},!)
            \\
            \Triangle
                &= (\Triangle_{1},\emptyset,\set{e},!,\psi^{\Triangle}_{2})
        \end{aligned} &&
        \begin{aligned}
            \phi^{\Triangle}_{1}(f)
            &= (\gen_{0}(x),\gen_{0}(y)) \\
            \phi^{\Triangle}_{1}(g)
            &= (\gen_{0}(y),\gen_{0}(z)) \\
            \phi^{\Triangle}_{1}(h)
            &= (\gen_{0}(x),\gen_{0}(z)) \\
            \psi^{\Triangle}_{2}(e)
            & = (\gen_{1}(f)\compCell^{2}_{1}\gen_{1}(g),\gen_{1}(h))
        \end{aligned}
    \end{align}
    We may illustate this computad as follows, where \(\sim\) denotes
    invertibility:
    \begin{equation}\label{eq:walking-triangle-fig}
        \begin{tikzcd}
            & x\ar[rd]{}{g}\ar[d,Rightarrow,"e"{swap},"\sim"{sloped}] & \\
            y\ar[rr,swap]{}{h}\ar[ru]{}{f} & \phantom{a} & z
        \end{tikzcd}
    \end{equation}
    The name \emph{walking commutative triangle} refers to the fact that morphisms
    out of \(\Triangle\) correspond precisely to triangles commuting up to
    equivalence.
\end{example}

\begin{example}\label{ex:walking-cylinders}
    Benjamin et al.~\cite{benjaminNaturalityHigherdimensionalPath2025} have
    given a description of a family of computad corresponding to \emph{directed
    cylinders} using their naturality construction. Modifying their construction
    by making certain generators invertible can be used to construct
    \emph{reversible cylinders}. In analogy to the case of strict
    \(\omega\)\=/categories~\cite{lafontFolkModelStructure2010}, reversible
    cylinders are expected to play a crucial role in the definition of a model
    structure for weak \(\omega\)\=/categories. This has been a longstanding
    conjecture, which we discuss further in Section~\ref{subsec:invertibility}.
    \begin{align}\label{eq:square-cylinder}
        \begin{tikzcd}[ampersand replacement=\&]
            \bullet \& \bullet \\ \bullet \& \bullet
            \arrow[from=1-1,to=1-2]
            \arrow[from=2-1,to=2-2]
            \arrow["\sim"{sloped}, swap, from=1-1,to=2-1]
            \arrow["\sim"{sloped}, from=1-2,to=2-2]
            \arrow["\sim"{sloped}, Rightarrow, from=1-2, to=2-1]
        \end{tikzcd} &&&&
        \begin{tikzcd}[ampersand replacement=\&]
            \bullet \& \bullet \&\& \bullet \& \bullet \\
            \bullet \& \bullet \&\& \bullet \& \bullet
            \arrow[from=1-1,to=1-2, bend left = 30]
            \arrow[""{name=src0}, from=2-1,to=2-2, bend left = 30]
            \arrow[""{name=tgt0}, from=2-1,to=2-2, bend right = 30]
            \arrow["\sim"{sloped}, swap, from=1-1,to=2-1]
            \arrow[""{name=src2}, "\sim"{sloped}, from=1-2,to=2-2]
            \arrow["\sim"{sloped}, Rightarrow, bend right = 20, from=1-2,
                to=2-1, shorten >= .75em, shorten <= .75em]
            \arrow[Rightarrow, from=src0, to=tgt0, shorten <= .4em]
            \arrow[""{name=src1},from=1-4,to=1-5, bend left = 30]
            \arrow[""{name=tgt1}, from=1-4,to=1-5, bend right = 30]
            \arrow[from=2-4,to=2-5, bend right = 30]
            \arrow[""{name=tgt2}, "\sim"{sloped}, swap, from=1-4,to=2-4]
            \arrow["\sim"{sloped}, from=1-5,to=2-5]
            \arrow["\sim"{sloped}, Rightarrow, bend left = 20, from=1-5,
                to=2-4, shorten >= .75em, shorten <= .75em]
            \arrow[Rightarrow, from=src1, to=tgt1, shorten <= .4em]
            \arrow["\sim"{sloped}, Rightarrow, from=src2, to=tgt2,
                shorten <= 1.5em, shorten >= 1.5em]
            \arrow[no head, from=src2, to=tgt2,
                shorten <= 1.5em, shorten >= 1.5em]
        \end{tikzcd}
    \end{align}
\end{example}

\begin{example}\label{ex:invertible-gsets}
    For the definition of computads with invertible generators, we defined an
    inclusion of globular sets into computads by making every cell into an
    ordinary generator. Similarly, we could define out of an \(n\)\=/globular
    set \(X\) an \(n\)\=/computad \(\invCptd_n(X)\) by making the cells of \(X\)
    into invertible generators. More precisely, we define
    \(\invCptd_n(X) = \iCptd_n(X)\) for \(n\le 0\) and then we let
    \begin{equation}\label{eq:invertible-gsets}
        \invCptd_{n+1}(X) =
        (\invCptd_n(\tr^{n+1}_n X),\emptyset,X_{n+1},!,\psi_{n+1}^{\invCptd(X)})
    \end{equation}
    where the attaching function \(\psi_{n+1}^{\invCptd(X)}\) sends an
    \((n+1)\)\=/cell \(f \colon x \to y\) to
    \begin{equation}\label{eq:invertible-gsets-bdry}
        \psi^{\invCptd(X)}_{n+1}(f) = \begin{cases}
            (x,y) & \text{if }n=0\\
            (\fwd_{n}(\igen_{n} (x)),\fwd_{n}(\igen_{n}(y))) & \text{otherwise}
        \end{cases}
    \end{equation}
    We will see in Proposition~\ref{cor:all-gen-invertible-is-groupoid} that
    this construction generates a weak \(\omega\)\=/category whose cells are
    all invertible.
\end{example}

\begin{proposition}\label{prop:tr-is-rali}
    The truncation functor \(\tr^m_n\colon \iComp_m\to \iComp_n\) is a right
    adjoint left inverse for all \(-1\le n < m \le \omega\). In particular,
    every \(n\)\=/computad is canonically an \(m\)\=/computad.
\end{proposition}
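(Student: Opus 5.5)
We construct a left adjoint \(\iota^m_n\colon\iComp_n\to\iComp_m\) to \(\tr^m_n\) with \(\tr^m_n\iota^m_n=\id\); this identity makes \(\tr^m_n\) a left inverse of \(\iota^m_n\). Since adjunctions compose, and \(\tr^m_k=\tr^n_k\tr^m_n\) by Property~\ref{item:prop-cocycle}, it suffices to treat \(m=n+1\) for finite \(n\) and then pass to \(m=\omega\).

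\textbf{Successor case.} For an \(n\)\=/computad \(C\) we let
\[
\iota^{n+1}_n C=(C,\emptyset,\emptyset,!,!),
\]
adjoining no generators in dimension \(n+1\). Then \(\tr^{n+1}_n\iota^{n+1}_n C=C\) holds by definition. On morphisms we set \(\iota(f)=(f,!,!)\). The two squares of~\eqref{eq:def-morphism} commute vacuously, and functoriality follows from~\eqref{eq:def-composition}. For the adjunction, a morphism \(\iota C\to D\) into an \((n+1)\)\=/computad \(D\) is a triple \((f_n,f_V,f_U)\) with \(f_n\colon C\to D_n\) and \(f_V,f_U\) maps out of the empty set. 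Hence
\[
\iComp_{n+1}(\iota C,D)\cong\iComp_n(C,\tr D)
\]
via \(f\mapsto f_n\), and this bijection is natural in \(D\) because \(\tr\) preserves composition. The unit is the identity, and the counit \(\iota\tr D\to D\) is \((\id_{D_n},!,!)\). The triangle identities reduce to equalities of these trivial triples, using that composing with \((\id,\gen_{n+1},\igen_{n+1})\) changes nothing.

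\textbf{Finite \(m\).} By induction, \(\iota^m_n=\iota^m_{m-1}\cdots\iota^{n+1}_n\) is left adjoint to \(\tr^m_n\), and its composite with \(\tr^m_n\) is the identity.

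\textbf{Limit case.} For \(m=\omega\) we define \(\iota^\omega_n C\) as the compatible family \((\iota^k_nC)_{k}\), taking \(\tr^n_k C\) for \(k\le n\). Compatibility \(\tr^{k+1}_k\iota^{k+1}_n=\iota^k_n\) holds strictly by construction, so the universal property of the strict limit~\eqref{eq:def-omega-computad} yields a functor. A morphism \(\iota^\omega_nC\to D\) is then a compatible family \(f_k\colon\iota^k_nC\to D_k\). We show by induction on \(k>n\) that each \(f_k\) is determined by \(f_{k-1}\), its \(V\) and \(U\) parts being empty. So the family is determined by \(f_n\colon C\to\tr_n D\), which gives the adjunction bijection, and again \(\tr_n\iota_n=\id\).

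\textbf{Expected obstacle.} The main care point is bookkeeping rather than mathematics: the composites of adjunctions must yield a left adjoint that is a \emph{strict} section, and the coherence of the unit and counit with the truncations must hold in the limit. Because every unit involved is an identity, this reduces to checking that the strict equalities \(\tr^{k+1}_k\iota^{k+1}_n=\iota^k_n\) hold on the nose, which they do by construction.

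The final clause of the statement follows directly: the canonical \(m\)\=/computad associated to an \(n\)\=/computad \(C\) is \(\iota^m_nC\), obtained by adding no generators above dimension \(n\).
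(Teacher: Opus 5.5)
Your proof is correct and follows essentially the same route as the paper. You use the one-step skeleton \((C,\emptyset,\emptyset,!,!)\) with counit \((\id,!,!)\), compose adjunctions for finite \(m\), and build a componentwise compatible family for \(m=\omega\); the only point you leave implicit is the base case \(n=-1\), where \(\iComp_0=\Set\) is not given by a tuple and the left adjoint simply picks the empty \(0\)-computad, as in the paper.
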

\begin{proof}
    We construct the left adjoint and the counit of the adjunction:
    \begin{align}\label{eq:counit-sk}
        \sk^m_n&\colon \iComp_n\to \iComp_m &
        \counitSk^m_n&\colon \sk^m_n\tr^m_n \Rightarrow \id
    \end{align}
    and prove that it \(\sk^m_n\) is a right inverse to \(\tr^m_n\), and that
    \(\counitSk^m_n\) with the identity natural transformation satisfy the
    triangle equations:
    \begin{align}\label{eq:counit-sk-equations}
        \tr^m_n\sk^m_n &= \id &
        \tr^m_n\counitSk^m_n &= \id &
        \counitSk^m_n\sk^m_n &= \id
    \end{align}
    We will do this by recursion on \(0\le k \le \omega\) such that
    \(m = n + 1 + k\).

    In the base case \(k = 0\) and \(n = -1\), the skeleton
    functor picks the empty set \(\emptyset\) and \(\counitSk^0_{-1}\) picks the
    unique morphism out of it. In the base case where \(k=0\) and \(n\in\Nat\),
    the left adjoint and the counit are defined by
    \begin{align}\label{eq:sk-counit-base}
        \sk^{n+1}_n(C) &= (C, \emptyset,\emptyset,!,!) &
        \sk^{n+1}_n(f) &= (f, !, !) &
        \counitSk^{n+1}_{n,C} &= (\id_{\tr_n^{n+1} C} , ! , !)
    \end{align}
    where \(!\) denotes the unique morphism out of the empty set. In this case,
    we can check Equations~\ref{eq:counit-sk-equations} directly.

    For the case where \(k\) is a successor, we use that right adjoints are
    closed under composition. More precisely,
    \(\tr^{n+1+k}_n = \tr^{n+k}_n\tr^{n+1+k}_{n+k}\) by
    Property~\ref{item:prop-cocycle}, so we define:
    \begin{equation}\label{eq:sk-counit-step}
        \begin{aligned}
            \sk^{n+1+k}_n
                &= \sk^{n+1+k}_{n+k}\sk^{n+k}_n \\
            \counitSk^{n+1+k}_n
                &= \counitSk^{n+1+k}_{n+k}\circ
                (\sk^{n+1+k}_{n+k}\counitSk^{n+k}_n\tr^{n+1+k}_{n+k})
        \end{aligned}
    \end{equation}
    Equations~\ref{eq:counit-sk-equations} in this case follow from the same
    equations for \(k-1\) and for \(0\).

    Finally, for the case that \(k = m = \omega\) we define the left adjoint and
    the counit to be the unique functor and natural transformation with
    components
    \begin{align}\label{eq:sk-limit}
        \tr^\omega_r\sk^\omega_n &= \begin{cases}
            \tr^n_r &\text{if }r<n \\
            \id &\text{if }r=n \\
            \sk^r_n &\text{if }r>n
        \end{cases} &
        \tr^\omega_r\counitSk^\omega_n &= \begin{cases}
            \id &\text{if }r<n \\
            \id &\text{if }r=n \\
            \counitSk^r_n &\text{if }r>n
        \end{cases}
    \end{align}
    These are well-defined by Property~\ref{item:prop-cocycle} and by the
    following generalisation of Equation~\ref{eq:counit-sk-equations}:
    \begin{align}\label{eq:counit-sk-equations-general}
        \tr^m_r \sk^m_n = \begin{cases}
            \tr^n_r &\text{if }r<n \\
            \id &\text{if }r=n \\
            \sk^r_n &\text{if }r>n
        \end{cases} &&
        \tr^m_r\counitSk^m_n &= \begin{cases}
            \id &\text{if }r<n \\
            \id &\text{if }r=n \\
            \counitSk^r_n &\text{if }r>n
        \end{cases}
    \end{align}
    which can be proven inductively on \(k\) using the definitions.
\end{proof}

\noindent
Since the skeleton functors \(\sk^m_n\) are left adjoints left inverses, they
are fully faithful. Using these functors, we will implicitly identify
\(n\)\=/computads with \(m\)\=/computads that have no generators above dimension
\(n\).

\begin{proposition}\label{prop:representability-cell-sphere-iset}
    For every computad \(C\), there exist natural isomrophisms
    \begin{equation}\label{eq:representability-cell-sphere-iset}
        \begin{aligned}
            \iCell(C)_{n} &\cong \iComp(\iCptd(\Disk^n),C) \\
            \Par_{n}(\iCell(C)) &\cong \iComp(\iCptd(\Sphere^{n}),C) \\
            \iSet_{n+1}(C) &\cong \iComp(\Equiv^{n+1},C)
        \end{aligned}
    \end{equation}
    Under these isomorphisms, composition with \(\iCptd(\sphereInc_n)\) sends a
    cell to its boundary, while composition with \(\diskInc_{n+1}\)
    sends an element \(i\in\iSet_{n+1}(C)\) to the cell \(\fwd_{n+1}(i)\).
    Composition with \(\equivInc_{n+1}\) sends \(i\in\iSet_{n+1}(C)\) to its
    boundary.
\end{proposition}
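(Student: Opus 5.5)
The plan is to derive all three isomorphisms from the adjunction \(\iCptd\dashv\iCell\) together with the definition of morphisms of computads. For the first two, I would compose the adjunction bijection \(\iComp(\iCptd(X),C)\cong\gSet(X,\iCell(C))\), which is natural in \(X\) and \(C\), with the Yoneda isomorphisms~\eqref{eq:disk-sphere-representable} for \(X=\Disk^n\) and \(X=\Sphere^n\). Naturality of the adjunction bijection in \(X\) then gives the first compatibility claim. Precomposition with \(\iCptd(\sphereInc_n)\) corresponds to precomposition with \(\sphereInc_n\colon\Sphere^{n-1}\to\Disk^n\), and under Yoneda this sends an \(n\)\=/cell to its pair of source and target. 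All these isomorphisms are natural in \(C\), since each ingredient is.

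For the third isomorphism, I would unfold a morphism \(f\colon\Equiv^{n+1}\to C\), viewing \(\Equiv^{n+1}\) as an \(\omega\)\=/computad via the skeleton functor. By Proposition~\ref{prop:tr-is-rali}, morphisms out of \(\sk^\omega_{n+1}\Equiv^{n+1}\) correspond to morphisms \(\Equiv^{n+1}\to\tr_{n+1}C\) of \((n+1)\)\=/computads. Such a morphism is a triple \((f_n,f_V,f_U)\).
\begin{itemize}
    \item The component \(f_n\colon\iCptd_n(\Sphere^n)\to C_n\) corresponds, by the second isomorphism one dimension down, to a parallel pair \((a,b)\in\Par_n(\iCell(C))\).
    \item The component \(f_V\) is the unique function out of the empty set.
    \item The component \(f_U\) picks an element \(i\in\iSet_{n+1}(C)\) subject to \(\ibdry_{n,C}(i)=\Par_n(\iCell_n(f_n))(\gen_n(\sigma_n),\gen_n(\tau_n))=(a,b)\).
\end{itemize}
Hence \(f_n\) is entirely determined by \(i\) through \(\ibdry_{n,C}(i)\), and the assignment \(f\mapsto f_U(c_{n+1})\) is a bijection. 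Its inverse sends \(i\) to the morphism whose \(n\)\=/component classifies \(\ibdry_{n,C}(i)\). Naturality in \(C\) follows from the definition of composition~\eqref{eq:def-composition}, \((g\circ f)_U=\iSet(g)\circ f_U\), together with naturality of \(\ibdry\)~\eqref{eq:ibdry-natural}. This same description gives the claim about \(\equivInc_{n+1}=(\id,!,!)\): precomposing with it keeps only \(f_n\), which is the boundary of \(i\).

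For \(\diskInc_{n+1}\), I would compute \((f\circ\diskInc_{n+1})_V\) on the top generator using~\eqref{eq:def-composition} and~\eqref{eq:iCell-morphisms}:
\[
\iCell(f)(\fwd_{n+1}(\igen_{n+1}(c_{n+1})))=\fwd_{n+1}(\iSet(f)(\igen_{n+1}(c_{n+1})))=\fwd_{n+1}(f_U(c_{n+1}))=\fwd_{n+1}(i).
\]
Under the first isomorphism, which evaluates a morphism out of \(\iCptd(\Disk^{n+1})\) at its generating top cell, this is exactly the cell \(\fwd_{n+1}(i)\).

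The main obstacle I expect is bookkeeping between dimensions, namely passing between \(\omega\)\=/computads and \((n+1)\)\=/computads via \(\sk\dashv\tr\), and confirming that the morphism into \(\tr_{n+1}C\) loses no information. I would handle this with the adjunction of Proposition~\ref{prop:tr-is-rali} together with the compatibility of \(\iCell\) and \(\iSet\) with truncation (Property~\ref{item:prop-tr-adjunction}), so that \(\iSet_{n+1}(C)\) means \(\iSet_{n+1}(\tr_{n+1}C)\).
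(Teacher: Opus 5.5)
Your proposal is correct and follows the paper's route. The first two isomorphisms come from \(\iCptd\dashv\iCell\) together with the Yoneda isomorphisms, and the third comes from unfolding a morphism \(\Equiv^{n+1}\to C\) through the skeleton--truncation adjunction, where the attaching condition \(\ibdry_{n,C}(i)=A\) forces the lower-dimensional component. You also check the compatibilities with \(\diskInc_{n+1}\) and \(\equivInc_{n+1}\) and naturality in \(C\), which the paper leaves implicit, and you correctly write \(\ibdry_{n,C}\) where the paper has the index typo \(\ibdry_{n+1,C}\).
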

\begin{proof}
    The first two isomorphisms follow form the adjunction \(\iCptd\dashv\iCell\)
    and isomorphisms~\eqref{eq:disk-sphere-representable}. Using the adjunction
    of Proposition~\ref{prop:tr-is-rali} and the second isomorphism, we see
    that morphisms \(f\colon \Equiv^{n+1}\to C\) are in bijection with a
    parallel pair \(A\in \Par_n(\Cell(C))\) and an element
    \(i\in\iSet_{n+1}(C)\) -- the image of the invertible generator \(c_{n+1}\)
    -- satisfying the compatibility condition:
    \begin{equation}\label{eq:repr-iset-bdry}
        \ibdry_{n+1,C}(i) = A
    \end{equation}
    which corresponds to commutativity of the second square
    of~\eqref{eq:def-morphism}. The compatibility condition completely
    determines the parallel pair \(A\), so morphisms \(\Equiv^{n+1}\to C\) are
    in natural bijection to elements of \(\iSet_{n+1}(C)\).
\end{proof}



\section{Weak \texorpdfstring{\(\omega\)\=/}{ω-}categories}
\label{sec:omega-cats}

In this section, we introduce the category of ordinary computads, in the sense
of Dean et al.~\cite{deanComputadsWeak$omega$categories2024} and show that it is
a coreflective subcategory of \(\iComp\). We then show that both the inclusion
and the coreflector preserve the generated weak \(\omega\)\=/categories, so that
kinds of computads give rise to cofibrant weak \(\omega\)\=/categories, in the
sense of Batanin and Leinster~\cite{leinsterHigherOperadsHigher2003}. We then
study coinductively invertible cells and describe the \(\omega\)\=/categories
generated by computads as certain pushouts.

\subsection{Comparison with ordinary computads}
\label{sec:ord-computads}

We first introduce the category of ordinary computads and show that it coincides
with the one defined by Dean et
al.~\cite{deanComputadsWeak$omega$categories2024}. We then show that the
inclusion of ordinary computads to computads with invertible generators admits
a right adjoint.

\begin{definition}\label{}
    The category \(\Comp_n\) of \emph{\(n\)\=/computads} for
    \(-1\le n \le \omega\) is the full subcategory of \(\iComp_n\) defined
    recursively by:
    \begin{itemize}
        \item \(\Comp_n = \iComp_n\) for \(n \le 0\),
        \item \(C\in \Comp_{n+1}\) for \(n\in\Nat\) when
            \(\tr^{n+1}_{n} C \in \Comp_{n}\) and \(U_{n+1}^C = \emptyset\)
        \item \(C\in \Comp_\omega\) when \(\tr_n C \in \Comp_n\)
            for all \(n < \omega\)
    \end{itemize}
    In other words, \(\Comp_n\) consists of the computads that have no
    invertible generators. The truncation and skeleton functors restrict to the
    categories \(\Comp_n\) by definition.
\end{definition}

The left adjoint \(\iCptd\) factors through the category of \(n\)\=/computads,
and we will denote the restriction by \(\Cptd_n\colon\gSet_n\to\Comp_n\). We
also denote the restriction of the cell functor by
\(\Cell_n\colon\Comp_n\to\gSet_n\). Since \(\Comp_n\) is a full subcategory,
these functors remain adjoints to each other with the same unit and the
restriction of the counit.

\begin{proposition}\label{prop:dean-comparision}
    The category \(\Comp_n\) is isomoprhic to the category of computads of Dean
    et al.~\cite{deanComputadsWeak$omega$categories2024}. Under this
    isomorphism, the adjunction \(\Cptd\dashv\Cell\) agrees with their
    adjunction \(\operatorname{Free}\dashv \Cell\).
\end{proposition}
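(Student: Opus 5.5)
We argue by recursion on the dimension \(n\), constructing simultaneously an isomorphism of categories \(\Theta_n\) between \(\Comp_n\) and the category of \(n\)\=/computads of Dean et al., together with a natural isomorphism between \(\Cell_n\) and their cell functor transported along \(\Theta_n\). We also check that \(\Theta_n\) commutes with truncation, and that the natural isomorphism respects the boundary functions, the full spheres and the unit and counit. The base cases \(n=-1,0\) are immediate, since both constructions give the terminal category and \(\Set\), with identity adjunctions.

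For the successor step, fix \(C\in\Comp_{n+1}\). Then \(U_{n+1}^C=\emptyset\), so the data of \(C\) reduces to \(C_n\in\Comp_n\), a set \(V_{n+1}^C\) and an attaching function \(\phi_{n+1}^C\) into \(\Par_n(\Cell_n(C_n))\). Transporting along \(\Theta_n\) and the isomorphism of cell functors gives exactly the data of an \((n+1)\)\=/computad in the sense of Dean et al.

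The key observation concerns the invertibility set. By induction, \(\iSet_{n+1}(C)\) is empty whenever \(C\) has no invertible generators: \(\iSet_0\) is empty, and \(\iSet_{n+1}(C)\) is \(U_{n+1}^C\amalg\iSet_n(C_n)\amalg\iSet_n(C_n)\). Hence, for such \(C\), the three invertibility summands of the polynomial endofunctor \(F\) of Remark~\ref{rmk:poly-endo-ind-rec} vanish. Likewise, a morphism \(D\to C\) in \(\Comp_{n+1}\) has \(f_U\) trivial, because \(U_{n+1}^D\) is empty.

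It follows that the inductive-recursive specification of cells and morphisms with target \(C\) coincides with that of Dean et al.~\cite[Proposition~3.1]{deanComputadsWeak$omega$categories2024}. One point needs care here: the \(\coh\) constructor ranges over morphisms \(\iCptd_{n+1}(\Pos(B))\to C\). These sources lie in \(\Comp_{n+1}\), so only the restricted endofunctor on the subcategory of families indexed by \(\Comp_{n+1}\) matters. Its initial algebra is the restriction of the initial algebra of \(F\), because the \(\coh\) clause never refers to families indexed by computads outside \(\Comp_{n+1}\).

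The fullness conditions match as well. The support of a cell of a computad without invertible generators uses only the \(\gen\) and \(\coh\) clauses, which are exactly those of Dean et al. The recursive clauses for composition, \(\Cell_{n+1}\) on morphisms, identities, \(\Cptd_{n+1}\), the unit and the counit then agree clause by clause. This yields \(\Theta_{n+1}\) and extends the natural isomorphism of cell functors.

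For the limit case we take the limit of the \(\Theta_n\) over the truncation towers. Both categories are defined as such limits, and \(\Comp_\omega\) is exactly the limit of the \(\Comp_n\). The adjunctions agree because their units and counits agree levelwise and are determined by their truncations.

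The main obstacle is the successor step: showing rigorously that the initial algebra defining cells of \(C\) in \(\iComp\) restricts to the one of Dean et al. The plan is to check that the Adámek chain of Theorem~\ref{thm:initial-algebra-exists}, restricted to indices in \(\Comp_{n+1}\), is stage by stage isomorphic to the analogous chain for the ordinary endofunctor. This holds because \(\iSet_{n+1}\) vanishes and the \(\coh\) clause only queries families indexed by the computads \(\iCptd_{n+1}(\Pos(B))\), all of which lie in \(\Comp_{n+1}\).
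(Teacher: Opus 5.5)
Your proposal is correct and follows the same route as the paper's proof. Both argue by induction on the dimension \(-1\le n\le\omega\), using that \(\iSet_n(C)=\emptyset\) for computads without invertible generators, so the extra constructors and the component \(f_U\) vanish, and that \(\Par_n(\Cell_n(C))\) matches the sphere functor of Dean et al.; your remark that the \(\coh\) clause only involves families indexed by objects of \(\Comp_{n+1}\), so the initial algebra restricts, makes explicit a point the paper leaves implicit.
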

\begin{proof}
    The statement can be proven by induction on \(-1\le n \le \omega\),
    together with the fact that \(\iSet_n(C) = \emptyset\) for every ordinary
    \(n\)\=/computad \(C\) and that the set \(\Par_n(\Cell_n(C))\) coincides
    with their functor \(\operatorname{Sphere}_n(C)\).
\end{proof}

\begin{corollary}\label{cor:leinster-comparision}
  The category of algebras for the monad \(T\) induced by the adjunction
  \(\iCptd\dashv \iCell\) is the category of weak \(\omega\)\=/categories and
  strict \(\omega\)\=/functors of
  Leinster~\cite{leinsterHigherOperadsHigher2003}.
\end{corollary}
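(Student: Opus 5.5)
The plan is to show that the monad \(T = \iCell\,\iCptd\) on \(\gSet\) coincides with the monad \(\Cell\,\Cptd\) coming from ordinary computads. Then I will invoke the theorem of Dean et al.~\cite{deanComputadsWeak$omega$categories2024}: their monad \(\Cell\circ\operatorname{Free}\) is isomorphic to Leinster's free weak \(\omega\)\=/category monad, so the two categories of algebras agree. By Proposition~\ref{prop:dean-comparision}, \(\Cptd\dashv\Cell\) agrees with their \(\operatorname{Free}\dashv\Cell\), so it suffices to identify the two monads on \(\gSet\).

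The first step is to observe that \(\iCptd\colon\gSet\to\iComp\) lands in the full subcategory \(\Comp\), since \(\iCptd_{n+1}(X)\) has no invertible generators. Hence \(\iCptd = J\circ\Cptd\), where \(J\colon\Comp\hookrightarrow\iComp\) is the inclusion, and \(\Cell = \iCell\circ J\) by definition of the restriction. The functors then agree: \(\iCell\,\iCptd = \iCell\, J\,\Cptd = \Cell\,\Cptd\). The units agree because the unit of \(\Cptd\dashv\Cell\) is \(\unitCell\) itself. The multiplications are \(\iCell\,\counitCell\,\iCptd\) and \(\Cell\,\counitCell'\,\Cptd\), where \(\counitCell'\) is the restriction of \(\counitCell\) to \(\Comp\). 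At an object \(\Cptd X\) the two components \(\counitCell_{J\Cptd X}\) and \(J\counitCell'_{\Cptd X}\) coincide, because \(\Comp\) is full and the adjunction restricts with the same counit. So the monads are literally equal, and their Eilenberg--Moore categories coincide.

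The step needing most care is confirming that the restricted counit really is \(\counitCell\) on objects of \(\Comp\), that is, that \(\iCptd\,\iCell(C)\) is an ordinary computad even when \(C\) has invertible generators. This holds because \(\iCptd\) of any globular set has empty \(U\)'s. I would check this explicitly by induction on dimension from the definition of \(\iCptd_{n+1}\), including the limit stage. There is one further point: inside \(\Comp\), the invertibility sets \(\iSet_n\) are empty (as noted in the proof of Proposition~\ref{prop:dean-comparision}), so the extra constructors \(\fwd\), \(\linv\), \(\rinv\) contribute no cells. This is what makes \(\Cell\) match Dean et al.'s cells and lets their comparison with Leinster's monad transfer verbatim. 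I do not expect any further obstacle beyond citing that comparison result.
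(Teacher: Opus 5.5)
Your proposal is correct and follows the paper's own argument. The paper's proof likewise observes that \(\iCptd\) factors through the full subcategory \(\Comp\), so the monad induced by \(\iCptd\dashv\iCell\) agrees with the one induced by \(\Cptd\dashv\Cell\). It then uses Proposition~\ref{prop:dean-comparision} and Dean et al.'s comparison with Leinster's monad to identify the algebras; your explicit check of the units and multiplications spells out what the paper states in one line.
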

\begin{proof}
  The monad on \(\gSet\) induced by the adjunction with \(\iComp\) agrees with
  that induced by the adjunction with \(\Comp\). In light of
  Proposition~\ref{prop:dean-comparision}, algebras of this monad are
  Leinster's weak
  \(\omega\)\=/categories~\cite[Corollary~7.36]{
    deanComputadsWeak$omega$categories2024}
\end{proof}

\noindent
By definition of the category \(\Cat_\omega\) of weak \(\omega\)\=/categories as
a category of algebras for a monad, it is equipped with a free-forgetful
adjunction to the category \(\gSet\) of gloublar sets, as well as morphisms of
adjunctions:
\begin{equation}\label{eq:summary-of-adjunctions}
    \begin{tikzcd}[column sep=8em,row sep=huge]
        {\Comp} & {\iComp} & {\Cat_\omega} \\
        & \gSet
        \arrow[hook, from=1-1, to=1-2]
        \arrow["\Komp", bend left =15, from=1-1, to=1-3]
        \arrow["\iKomp", from=1-2, to=1-3]
        \arrow[""{name=0, anchor=center, inner sep=0}, "\Cell"',
            shift right=2, from=1-1, to=2-2]
        \arrow[""{name=1, anchor=center, inner sep=0}, "\Cptd"',
            shift right=2, from=2-2, to=1-1]
        \arrow["\dashv"{anchor=center, rotate=-118}, draw=none, from=1, to=0]
        \arrow[""{name=2, anchor=center, inner sep=0}, "\iCell"',
            shift right=2, from=1-2, to=2-2]
        \arrow[""{name=3, anchor=center, inner sep=0}, "\iCptd"',
            shift right=2, from=2-2, to=1-2]
        \arrow["\dashv"{anchor=center, rotate=-180}, draw=none, from=3, to=2]
        \arrow[""{name=4, anchor=center, inner sep=0}, "F"',
            shift right=2, from=2-2, to=1-3]
        \arrow[""{name=5, anchor=center, inner sep=0}, "U"',
            shift right=2, from=1-3, to=2-2]
        \arrow["\dashv"{anchor=center, rotate=118}, draw=none, from=4, to=5]
    \end{tikzcd}
\end{equation}
The composition and identity operations of computads extend to operations in
arbitrary weak \(\omega\)\=/categories as well: using the adjunction
\(F\dashv U\), the Yoneda lemma, and the pushout
square~\eqref{eq:binary-trees-po}, we get a pair of isomorphisms:
\begin{equation}\label{eq:disk-repr-cells}
    \begin{aligned}
        \chi_\bullet
            &\colon X_n\xlongrightarrow{\sim}\Cat_\omega(F\Pos(D_n), X) \\
        \chi_{\bullet,\bullet}
            &\colon X_n\times_{X_k}X_n
            \xlongrightarrow{\sim}\Cat_\omega(F\Pos(P^k_n), X)
    \end{aligned}
\end{equation}
These allows us to define the identity of a cell \(x\in X_n\) in a weak
\(\omega\)\=/category \(X\), and the \(k\)\=/composite of a pair of cells
\(x,y\in X_n\) such that \(\tgt_k(x) = \src_k(y)\) by letting:
\begin{align}\label{eq:id-comp-category}
    \idCell_{n+1}(x) &= \chi_{x}(\idCell_{n+1}(\gen_{n+1}(c_n))) &
    x \compCell^{n}_{k} y &= \chi_{x,y}(\gen_n(a) \compCell^{n}_{k} \gen_n(b))
\end{align}
where \(c_n\) is the top-dimensional position of the tree \(D_n\) and \(a,b\)
are the top-dimensional cells of \(P^k_n\). When \(X = \iKomp C\) is freely
generated by a computad, this definition of composition and identities restricts
to the one defined in Section~\ref{sec:definition}.

\begin{theorem}\label{thm:unfolding}
    The inclusion \(\Comp\hookrightarrow\iComp\) admits a right adjoint
    \(\Unf\).
\end{theorem}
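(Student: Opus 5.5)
We construct the right adjoint \(\Unf\) dimension by dimension, by recursion on \(n\), together with a counit \(\counitUnf_n\colon\Unf_n(C)\to C\) in \(\iComp_n\). We prove at each stage that \(\counitUnf_n\) is universal: for every ordinary \(n\)\=/computad \(D\), postcomposition with \(\counitUnf_{n,C}\) is a bijection \(\Comp_n(D,\Unf_nC)\cong\iComp_n(D,C)\). We also require compatibility with truncation, \(\tr^{n+1}_n\Unf_{n+1}=\Unf_n\tr^{n+1}_n\) and \(\tr^{n+1}_n\counitUnf_{n+1}=\counitUnf_n\tr^{n+1}_n\). This makes the limit case \(n=\omega\) formal, exactly as for \(\iCptd\dashv\iCell\) in the limit case: the functors and counits assemble uniquely, and a morphism \(D\to C\) of \(\omega\)\=/computads is a compatible family of truncated morphisms. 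For \(n\le 0\) we take \(\Unf_n=\id\) and \(\counitUnf_n=\id\).

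For the successor step, given \(C\in\iComp_{n+1}\), we set \(\Unf_{n+1}(C)=(\Unf_n(C_n),V',\emptyset,\phi',!)\). The generators are
\[
V' = \{(c,A') \mid c\in\iCell_{n+1}(C)_{n+1},\ A'\in\Par_n(\Cell_n(\Unf_nC_n)),\ \Par_n(\iCell_n(\counitUnf_{n,C_n}))(A')=\bdry_{n,C}(c)\},
\]
with \(\phi'(c,A')=A'\). The counit is \((\counitUnf_{n,C_n},\,(c,A')\mapsto c,\,!)\), which is a well-defined morphism because its first square commutes by construction and its second square is vacuous. Thus every cell of \(C\), including \(\fwd\), \(\linv\), \(\rinv\), the unitors and coherences, becomes an ordinary generator, once for each lift of its boundary. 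This matches the informal description in the introduction.

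Universality reduces to the top dimension. Let \(D\) be an ordinary \((n+1)\)\=/computad. A morphism \(g\colon D\to C\) is a pair \((g_n,g_V)\), with no \(g_U\) since \(U^D=\emptyset\). By the inductive hypothesis \(g_n\) factors uniquely as \(\counitUnf_n\circ\bar g_n\). We then define the lift by \(\bar g_V(v)=\gen_{n+1}(g_V(v),\Par_n(\Cell_n(\bar g_n))(\phi^D(v)))\), which is well typed by the first square of~\eqref{eq:def-morphism} together with naturality of \(\Par_n\iCell_n\). Uniqueness is the main obstacle, because a morphism \(D\to\Unf C\) may send a generator \(v\) to an arbitrary cell of \(\Unf C\), such as a coherence \(\coh_{B,A}[h]\), and not just to a generator. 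To handle this we first observe that \(\iCell_{n+1}(\counitUnf)_{n+1}\) sends \(\gen(c,A')\mapsto c\) and \(\coh_{B,A}[h]\mapsto\coh_{B,A}[\counitUnf\circ h]\). So two lifts of \(g\) that agree after postcomposition must agree on generator cells. For coherence cells we argue by structural induction on cells, simultaneously with morphisms out of \(\Cptd(\Pos B)\), that \(\iCell_{n+1}(\counitUnf)\) is injective on top cells over each fixed boundary.

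This injectivity is where the difficulty is concentrated. The catch is that \(\coh_{B,A}[h]\) in \(\Unf C\) and the generator \(\gen(\coh_{B,A}[\counitUnf h],A')\) have the same image under \(\counitUnf\), so injectivity fails and uniqueness breaks. The first thing we would try is to index generators only by cells of \(C\) that are not already hit by coherence cells. The cleanest version adds generators \((c,A')\) only for cells \(c\) whose head constructor is \(\gen\), \(\fwd\), \(\linv\) or \(\rinv\), and lets \(\coh\)\=/cells be reconstructed recursively. Under this definition \(\iCell_{n+1}(\counitUnf)\) restricted to cells over a fixed boundary should become a bijection, proved by induction on the inductive-recursive structure using the inductive hypothesis on morphisms out of \(\Cptd_{n+1}(\Pos B)\). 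Uniqueness, and existence via the inverse, then follow. Finally we check functoriality of \(\Unf_{n+1}\), which is forced by universality, and compatibility with truncation, which holds by construction.
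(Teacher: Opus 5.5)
Your final construction is the paper's proof: ordinary generators only for the top cells of \(C\) whose head constructor is \(\gen\), \(\fwd\), \(\linv\) or \(\rinv\), coherence cells \(\coh_{B,A}[h]\) lifted recursively to \(\coh_{B,A}[h^\dagger]\), and a bijection on top cells proved by structural induction. Your diagnosis of why the first definition of \(V'\) fails is also correct: the generator lying over \(\coh_{B,A}[\counitUnf\circ h]\) duplicates \(\coh_{B,A}[h]\), so uniqueness breaks.

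When you write it up, three things need fixing:
\begin{itemize}
\item Make the corrected generator set the actual definition, and drop the explicit lift \(\bar g_V(v)=\gen_{n+1}(\dots)\). That formula no longer makes sense when \(g_V(v)\) is a coherence cell; the lift must come from the inverse of the cell bijection.
\item State the isomorphism on boundaries explicitly. Your inductive hypothesis applied to \(D=\Cptd_n(\Sphere^n)\) makes \(\Par_n(\iCell_n(\counitUnf_{n,C_n}))\) a bijection. So \(A'\) is determined by \(c\), and the generators are just \(V^C_{n+1}\) plus three copies of \(\iSet_{n+1}(C)\), as in the paper.
\item Note that this same bijection does real work elsewhere. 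It guarantees that the lifted cell \(\coh_{B,A}[h^\dagger]\) lands over the prescribed boundary, and that the lifted generators of \(D\) satisfy the attaching condition.
\end{itemize}
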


\begin{proof}
    We will construct recursively on \(-1\le n\le \omega\) and for every
    \(n\)\=/computad with invertible generators \(C\), an ordinary
    \(n\)\=/computad \(\Unf_nC\) together with a morphism
    \(\counitUnf_{n,C}\colon \Unf_n C\to C\) such that postcomposition
    defines a natural bijection:
    \begin{equation}\label{eq:unf-is-adjoint}
        (\counitUnf_{n,C})_* \colon
            \Comp_n(D,\Unf_n C)\xlongrightarrow{\sim}\iComp_n(D,C)
    \end{equation}
    for every ordinary computad \(D\in\Comp_n\). Moreover, we will construct
    them such that they commute with the truncation functors in the obvious
    sense.

    In the base cases \(n \le 0\) the categories \(\Comp_n\) and \(\iComp_n\)
    agree, so we may define \(\Unf_nC = C\) and \(\counitUnf_{n,C} = \id_C\).
    Suppose therefore that the structures above have been defined for some
    \(n\in\Nat\). Then specialising the isomorphism to the \(n\)\=/computads
    \(D = \Cptd_n\Disk^k\) for \(k\le n\) and using the Yoneda lemma and the
    adjunction \(\Cptd\dashv\Cell\), we obtain an isomorphism of
    \(n\)\=/globular sets of the form:
    \begin{equation}\label{eq:unf-preserves-cells}
       \iCell_n(\counitUnf_{n,C}) \colon
            \Cell_n(\Unf_n C)\xlongrightarrow{\sim}\iCell_n(C)
    \end{equation}
    We fix now an \((n+1)\)\=/computad with invertible generators \(C\). The
    ordinary computad \(\Unf_{n+1}C = (\Unf_n C_n, V_{n+1}^{\Unf C}, \emptyset,
    \phi_{n+1}^{\Unf C}, !)\) is obtained by turning all cells of \(C\) obtained
    from \(\iSet_{n+1}(C)\) into ordinary generators, and the morphism
    \(\counitUnf_{n+1,C}\)
    sends the generators to the cells they correspond to. More precisely, we
    define the set of generators to be the coproduct:
    \begin{equation}\label{eq:unf-generators}
        V_{n+1}^{RC}
                = V_{n+1}^C \amalg \iSet_{n+1}(C) \amalg \iSet_{n+1}(C) \amalg \iSet_{n+1}(C)
    \end{equation}
    and we define a function \(\counitUnf_{n+1,C,V}\) out of it using the
    universal property:
    \begin{equation}\label{eq:counitUnf-gen}
        \begin{aligned}
            \counitUnf_{n+1,C,V}
                &\colon V_{n+1}^{RC} \to \iCell_{n+1}(C)_{n+1}\\
            \counitUnf_{n+1,C,V}
                &= \copair{\gen_{n+1}^C,\fwd_n,\linv_{n+1},\rinv_{n+1}}
        \end{aligned}
    \end{equation}
    We then define the attaching function \(\phi_{n+1}^{\Unf C}\) to be the
    unique function that fits in the following square:
    \begin{equation}\label{eq:unf-attaching}
        \begin{tikzcd}[ampersand replacement=\&, column sep = large]
            {V_{n+1}^{RC}} \& {\Par_n(\Cell_n(R_nC_n))} \\
            {\iCell_{n+1}(C)_{n+1}} \& {\Par_n(\iCell_n(C_n))}
            \arrow["{\phi_{n+1}^{RC}}", dashed, from=1-1, to=1-2]
            \arrow["{\counitUnf_{n+1,C,V}}"',
                from=1-1, to=2-1]
            \arrow["\Par_n(\iCell_n(\counitUnf_{n,C}))",
                "\sim"{rotate=90, anchor = south},
                 from=1-2, to=2-2]
            \arrow["{\bdry_n^C}"', from=2-1, to=2-2]
        \end{tikzcd}
    \end{equation}
    Commutativity of this square implies that the counit:
    \begin{equation}\label{eq:counitUnf}
        \counitUnf_{n+1,C} =
            (\counitUnf_{n,C_n},\counitUnf_{n+1,C,V},!)\colon \Unf_{n+1}C\to C
    \end{equation}
    is a well-defined morphism. We see immediately that the structures we built
    commute with the truncation functors. To complete the inductive step, we
    need to show that every morphism \(f\colon D\to C\) from an ordinary
    computad \(D\) to \(C\) factors uniquely through \(\counitUnf_{n+1,C}\):
    \begin{equation}\label{eq:counitUnf-lifting}
        \begin{tikzcd}
            & {\Unf_{n+1}C} \\
            D & C
            \arrow["{\counitUnf_{n+1,C}}", from=1-2, to=2-2]
            \arrow["{f^\dagger}", dashed, from=2-1, to=1-2]
            \arrow["f"', from=2-1, to=2-2]
        \end{tikzcd}
    \end{equation}
    We will construct the morphism \(f^\dagger\) by structural induction.

    Suppose first that \(D = \Cptd_{n+1}\Disk^{n+1}\) so that \(f\) clasifies
    some cell \(c\in\iCell_{n+1}(C)_{n+1}\). Unless \(c\) is a coherence
    cell, there exists unique generator \(v\in V_{n+1}^{\Unf C}\) such that the
    cell \(c' = \gen_{n+1}(v)\) satisfies:
    \begin{equation}\label{eq:counitUnf-cell}
        \iCell_{n+1}(\counitUnf_{n+1,C})(c') = c
    \end{equation}
    Moreover, every other top-dimensional cell of \(\Unf_{n+1}C\) is a coherence
    cell, so it can not satisfy this equation. Therefore, the morphism
    \(f^\dagger\) clasifying \(c'\) is the unique one
    making triangle~\eqref{eq:counitUnf-lifting} commute.

    Suppose then that \(c = \coh_{B,A}[g]\) is a coherence cell. By structural
    recursion, we may assume that there exists unique \(g^\dagger\) such that
    \(g = \counitUnf_{n+1,C}g^\dagger\). Since \(\varepsilon_{n+1,C}\) does not
    send any generator to a coherence cell, it follows that
    \(c' = \coh_{B,A}[g^\dagger]\) is the unique cell satisfying
    Equation~\ref{eq:counitUnf-cell} and hence the morphism \(f^\dagger\)
    tha classifies \(c'\) is again the unique one making
    triangle~\eqref{eq:counitUnf-lifting} commute.

    Let now \(D\) be an arbitrary ordinary computad. Then by structural
    recursion, for every \(v\in V_{n+1}^D\), there exists unique cell
    \(f^\dagger_V(v)\in\iCell_{n+1}(\Unf_{n+1}C)_{n+1}\) such that
    \begin{equation}\label{eq:counitUnf-mor-cell}
        \iCell_{n+1}(\counitUnf_{n+1,C})(f^\dagger_V(v)) = f_V(v)
    \end{equation}
    Moreover, there exists unique
    \(f_n^\dagger\colon D_n\to \Unf_nC_n\) satisfying
    \(\varepsilon_{n,C_n}f_n^\dagger = f_n\) by the recursion on dimension.
    These are related by the equation:
    \begin{equation}\label{eq:counitUnf-mor-compat}
        \bdry_{n,\Unf_{n+1}C} (f^\dagger_V(v))
            =
        \Par_n(\iCell_n(f_n))(\phi_{n+1}^D(v))
    \end{equation}
    Indeed, by construction, both sides of the equation become equal after
    applying the isomorphism \(\Par_n(\iCell_n(\varepsilon_{n,C_n}))\), so they
    must already be equal. It follows that
    \begin{equation}\label{eq:Unf-transpose}
        f^\dagger = (f^\dagger_n, f^\dagger_V, !) \colon D \to \Unf_{n+1} C
    \end{equation}
    is a well-defined morphism. By definition of \(f^\dagger_n\) and
    \(f^\dagger_V\), this is the unique morphism that makes
    triangle~\eqref{eq:counitUnf-lifting} commute. This concludes the structural
    recursion, and the successor step of the recursion on dimension.

    Suppose finally that the structures above have been defined for all
    \(n\in\Nat\) and they commute with the truncation functors. Then for
    \(n = \omega\), we may define for every computad with invertible generators
    \(C\), the ordinary computad \(\Unf C\) to be the one consisting of
    \(\Unf_n C_n\) for every \(n < \omega\), and we define
    \(\counitUnf_{C}\) to be the morphism consisting of
    \(\counitUnf_{n,C_n}\) for \(n < \omega\). By the definition of the
    categories of computads as limits, we have that:
    \begin{equation}
        \begin{tikzcd}[column sep = huge]
            {\iComp(D,\Unf C)} & {\Comp(D,C)} \\
            {\lim\limits_{n\in\Nat}\iComp_n(D_n,\Unf_nC_n)} &
            {\lim\limits_{n\in\Nat}\Comp_n(D_n,C_n)}
            \arrow["{(\counitUnf_C)_*}", from=1-1, to=1-2]
            \arrow[equals, from=1-1, to=2-1]
            \arrow[equals, from=1-2, to=2-2]
            \arrow["{(\counitUnf_{n,C_n})_*}"', "\sim", from=2-1, to=2-2]
        \end{tikzcd}
    \end{equation}
    Since postcomposition with \(\counitUnf_{n,C_n}\) is an isomorphism for all
    \(n\in\Nat\), the top row must be an isomorphism as well, finishing the
    induction.
\end{proof}

\begin{corollary}\label{cor:unf-preserves-Komp}
    The weak \(\omega\)\=/category generated by a computad \(C\) with invertible
    generators is isomorphic to the one generated by the ordinary computad
    \(\Unf C\).
\end{corollary}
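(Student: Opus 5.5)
The plan is to deduce the statement from Theorem~\ref{thm:unfolding} by showing that the counit \(\counitUnf_C\colon \Unf C\to C\) of the coreflection induces an isomorphism on cells. Then \(\iKomp(\counitUnf_C)\colon \Komp(\Unf C)\to\iKomp C\) will be a morphism of weak \(\omega\)\=/categories whose underlying morphism of globular sets is \(\iCell(\counitUnf_C)\). Since the forgetful functor \(U\colon\Cat_\omega\to\gSet\) from a category of algebras is conservative, it will follow that this morphism is an isomorphism.

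The key step is therefore to show that
\[
\iCell(\counitUnf_C)\colon \Cell(\Unf C)\to \iCell(C)
\]
is an isomorphism of globular sets. By Proposition~\ref{prop:representability-cell-sphere-iset}, we have \(\iCell(E)_n\cong\iComp(\iCptd(\Disk^n),E)\) naturally in \(E\). The computad \(\iCptd(\Disk^n)=\Cptd(\Disk^n)\) has no invertible generators, so it is an ordinary computad. Evaluating the natural bijection \((\counitUnf_C)_*\colon\Comp(D,\Unf C)\to\iComp(D,C)\) of Theorem~\ref{thm:unfolding} at \(D=\Cptd(\Disk^n)\) gives a bijection
\[
\Cell(\Unf C)_n\cong \iCell(C)_n
\]
in each dimension. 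By naturality of the representing isomorphisms, this bijection is exactly the component of \(\iCell(\counitUnf_C)\). Compatibility with sources and targets is automatic, because \(\iCell(\counitUnf_C)\) is already a morphism of globular sets. Alternatively, one can read this step off directly from the isomorphism~\eqref{eq:unf-preserves-cells} in the proof of Theorem~\ref{thm:unfolding}, after passing to the limit \(n=\omega\).

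It remains to identify the functors involved. We have \(\Komp = \iKomp\circ\mathrm{incl}\), and both correspond to the comparison functors into the category of algebras for the monad \(T=\iCell\,\iCptd=\Cell\,\Cptd\) (Corollary~\ref{cor:leinster-comparision}). The comparison functor sends a computad \(E\) to the algebra \(\iCell E\) with structure map \(\iCell\counitCell_E\), and it sends a morphism \(f\) to \(\iCell f\). Hence \(\iKomp(\counitUnf_C)\) has underlying map \(\iCell(\counitUnf_C)\), which is invertible by the previous paragraph. Its inverse is automatically an algebra map, so \(\iKomp(\counitUnf_C)\) is an isomorphism \(\Komp(\Unf C)\cong \iKomp C\).

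The only point I expect to require care is the bookkeeping of the representability isomorphisms. One must check that the bijection obtained from Theorem~\ref{thm:unfolding} coincides with \(\iCell(\counitUnf_C)\), rather than with some other bijection. This follows from the Yoneda lemma together with the naturality of the adjunction isomorphisms for \(\iCptd\dashv\iCell\), and of their restriction to \(\Cptd\dashv\Cell\).
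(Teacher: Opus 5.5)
Your proposal is correct and follows the paper's own argument. The counit \(\counitUnf_C\) induces an isomorphism on underlying globular sets of cells, which comes from the coreflection bijection evaluated at the disk computads, i.e.\ isomorphism~\eqref{eq:unf-preserves-cells}; conservativity of the monadic forgetful functor \(U\) then makes \(\iKomp(\counitUnf_C)\) an isomorphism of \(\omega\)-categories. Your extra bookkeeping, deriving the cell isomorphism directly at level \(\omega\) and identifying the functors via the comparison functor, only makes explicit what the paper leaves implicit.
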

\begin{proof}
    It suffices to show that the counit of the adjunction
    \(\counitUnf_C\colon \Unf C\to C\) gives rise to an isomorphism of
    weak \(\omega\)\=/categories:
    \begin{equation}\label{eq:unf-preserves-Komp}
        \iKomp(\counitUnf_C)\colon \Komp(\Unf C)\to \iKomp(C)
    \end{equation}
    This morphism become the isomorphism~\eqref{eq:unf-preserves-cells} after
    applying the forgetful functor \(U\colon \Cat_\omega\to \gSet\). The
    forgetful functor is strict monadic, so it reflects isomorphisms. Therefore,
    \(\iKomp(\counitUnf_C)\) is already an isomorphism of
    \(\omega\)\=/categories.
\end{proof}

\noindent
Since the category \(\Cat_\omega\) of weak \(\omega\)\=/categories is locally
finitely
presentable~\cite[Corollary~4.5]{deanComputadsWeak$omega$categories2024}, the
set of morphisms
\begin{equation}\label{eq:gen-cofibrations}
    I = \set{ F\sphereInc_n \colon F\Sphere^{n-1} \to F\Disk^n \mid n\in\Nat}
\end{equation}
cofibrantly generated a weak factorisation system by the small object argument.
Morphsims in the left class of this system are called \emph{cofibrations}, while
morphisms in the right class are called \emph{trivial fibrations}. We call a
weak \(\omega\)\=/category \(X\) \emph{cofibrant} when the unique morphism
\(\emptyset\to X\) from the initial \(\omega\)\=/category is a cofibration.
This corresponds precisely to being generated by an ordinary computad, as shown
by Markakis~\cite[Proposition~19 \& Corollary~22]{
markakisComputadsGeneralisedSignatures2024}. Combining this result with
Corollary~\ref{cor:unf-preserves-Komp}, we get the following result:

\begin{corollary}\label{cor:computads-cofibrant}
    Weak \(\omega\)\=/categories generated by a computad with invertible
    generators are cofibrant.
\end{corollary}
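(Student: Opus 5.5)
The plan is to reduce the statement to the ordinary case via the coreflection of Theorem~\ref{thm:unfolding}. Let \(C\) be a computad with invertible generators. By Corollary~\ref{cor:unf-preserves-Komp}, the counit \(\counitUnf_C\colon \Unf C\to C\) induces an isomorphism of weak \(\omega\)\=/categories \(\Komp(\Unf C)\cong \iKomp(C)\). Since \(\Unf C\) is an ordinary computad, it then suffices to show that \(\omega\)\=/categories of the form \(\Komp(D)\) with \(D\in\Comp\) are cofibrant. Cofibrancy is invariant under isomorphism, because the left class of a weak factorisation system contains all isomorphisms and is closed under composition.

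For ordinary computads, the first step is to invoke Proposition~\ref{prop:dean-comparision}, which identifies \(\Comp\) and the adjunction \(\Cptd\dashv\Cell\) with those of Dean et al. Corollary~\ref{cor:leinster-comparision} identifies the algebras of the induced monad with Leinster's weak \(\omega\)\=/categories. Under these identifications, \(\Komp(D)\) is exactly the free \(\omega\)\=/category generated by a computad in the sense of Dean et al. The cited result of Markakis~\cite[Proposition~19 \& Corollary~22]{markakisComputadsGeneralisedSignatures2024} then states that such \(\omega\)\=/categories are precisely the cofibrant objects for the weak factorisation system cofibrantly generated by \(I\). Combining the two paragraphs gives that \(\iKomp(C)\) is cofibrant.

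The only delicate point is making sure the comparison isomorphisms are compatible. The functor \(\Komp\) used in Markakis's work must agree, up to natural isomorphism, with the composite \(\Comp\hookrightarrow\iComp\xrightarrow{\iKomp}\Cat_\omega\) in diagram~\eqref{eq:summary-of-adjunctions}. The set \(I\) must also be the same generating set in both settings. I expect this bookkeeping, rather than any new argument, to be the main obstacle.

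If one wants to avoid relying on the identification in full, there is a more self-contained route. One can show directly that \(\Komp(D)\) is a transfinite composite of pushouts of maps in \(I\). Writing \(D\) as the colimit of its skeleta \(\sk_n D\), each inclusion \(\Komp(\sk_{n-1}D)\to\Komp(\sk_n D)\) is a pushout of a coproduct of copies of \(F\sphereInc_n\), indexed by \(V_n^D\) and attached along \(\phi_n^D\). This would follow from the universal property of morphisms out of an \(n\)\=/computad given by the definition of morphisms~\eqref{eq:def-morphism}, together with \(\Komp\) being a left adjoint. Since \(\Komp\) preserves colimits, including the \(\omega\)\=/indexed one, \(\Komp(D)\) is then an \(I\)\=/cell complex and hence cofibrant.
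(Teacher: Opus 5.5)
Your main argument is the paper's own: the corollary comes from combining Corollary~\ref{cor:unf-preserves-Komp}, which gives \(\iKomp C\cong\Komp(\Unf C)\), with Markakis's result that \(\omega\)\=/categories generated by ordinary computads are cofibrant, using that cofibrancy is invariant under isomorphism. Your optional ``self-contained'' route is justified incorrectly, though, because \(\Komp\) is not a left adjoint: a right adjoint \(R\) would make \(\Cell(RX)\) isomorphic to the underlying globular set of the terminal \(\omega\)\=/category \(X\), yet a computad with a \(0\)\=/cell \(x\) already has the two distinct \(1\)\=/cells \(\idCell_1(x)\) and \(\idCell_1(x)\compCell^1_0\idCell_1(x)\); so the pushout and skeletal-colimit descriptions of \(\Komp D\) in \(\Cat_\omega\) do not follow from the corresponding colimits in \(\Comp\), and must be proved directly, as the paper does in Lemma~\ref{lem:univ-prop-computads} and Theorem~\ref{thm:free-cats-as-colims}.
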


\begin{remark}\label{rmk:finiteness}
    While weak \(\omega\)\=/categories generated by computads with invertible
    generators are isomorphic to oneew generated by ordinary computad, our
    framework gives a powerful systematic way to handle the complexity arising
    for freely inverting generators. For instance, regarding
    Example~\ref{ex:walking-equivalence}, the computad with invertibile
    generators \(\Equiv^{n+1}\) enjoys a simple and concise description. On the
    other hand, in order to present the same \(\omega\)\=/category as an
    ordinary computad, one would need to give a direct definition of
    \(\Unf\Equiv^{n+1}\). This has been done by Ozornova and
    Rovelli~\cite{ozornovaWhatEquivalenceHigher2024} in the strict case, and by
    Fujii et al.~\cite{fujii$o$equifibrationsStrictWeak2025} in the weak case.
    Nonetheless, their definition is significantly more involved. Our framework
    similarly allows for simple descriptions of other \(\omega\)\=/categories as
    in Examples~\ref{ex:walking-triangle} and~\ref{ex:walking-cylinders}.
    Moreover, we believe this framework could be useful in describing the
    localisation of \(\omega\)\=/categories obtained by freely inverting cells
    above a certain dimension \(n\).
\end{remark}

\subsection{Invertibility}
\label{subsec:invertibility}

We recall the notion of \emph{invertible cells} in \(\omega\)\=/categories and
proceed to show that invertible generators are indeed invertible. We then
proceed to show that \(\omega\)\=/categories generated by computads with
invertible generators can be constructed by a sequence of pushouts.

\begin{definition}\label{def:invertibility-structure}
  The set \(\Inv_{X,f}\) of \emph{invertibility structures} on a
  cell \(f\colon x\to y\) of positive dimension in an \(\omega\)\=/category
  \(X\) is defined coinductively by the following destructors, producing out
  of an invertibility structure \(i\in \Inv_{X,f}\):
  \begin{itemize}
  \item a cell \(\linv(i)\colon y\to x\), called a \emph{left inverse} of \(f\)
  \item a cell \(\rinv(i)\colon y\to x\), called a \emph{right inverse} of \(f\)
  \item a cell \(\lunit(i)\colon \linv(i)\compCell_n f \to \idCell_{n+1}(y) \),
    called a \emph{left unit} of \(f\)
  \item a cell \(\runit(i)\colon f\compCell_n\rinv(i)\to \idCell_{n+1}(y)\),
    called a \emph{right unit} of \(f\),
  \item invertibility structures \(\ilunit(i)\) on \(\lunit(i)\), and
    \(\irunit(i)\) on \(\runit(i)\)
  \end{itemize}
  We say that \(f\) is \emph{invertible} when there exists an invertibility
  structure on it.
\end{definition}

In other words, the family of invertibility structures on cells of an
\(\omega\)\=/category \(X\) is the carrier of the terminal coalgebra for the
endofunctor \(G_X\) on \({\Set\downarrow X^+}\) taking a family
\(W\) indexed by positive-dimensional cells of \(X\) to the family whose
component at a cell \(f \colon u\to v\) is given by:
\begin{equation}
\label{eq:invertibility-endofunctor}
    G_X(W)_f =
    \left(\coprod_{f_{l} \colon v \to u}\ \coprod_{f_{L} \colon f_{l}\ast f
    \to \idCell }W_{f_{L}}\right) \times \left(\coprod_{f_{r} \colon v \to u}\
    \coprod_{f_{R} \colon f\ast f_{r} \to \idCell }W_{f_{R}}\right)
\end{equation}
Existence of the terminal coalgebra is guaranted by Adamek's
theorem~\cite{adamekFreeAlgebrasAutomata1974} and preservation of connected
limits by the endofunctor. The latter follows by commutativity of connected
limits with coproducts in \(\Set\) and the computation of limits in slice
categories.

\begin{remark}\label{rmk:2sided-invertibility}
  We note that this definition of invertibility a priori differs from the
  one where the left and right inverse are assumed to be strictly equal.
  Nonetheless, it has been shown by
  Rice~\cite{riceCoinductiveInvertibilityHigher2020} and later by Fujii et
  al.~\cite[Proposition~3.3.1]{fujii$o$equifibrationsStrictWeak2025} that the
  two-notions coincide. Our use of this variant of invertibility is motivated by
  the work of Hadzihasanovic et
  al.~\cite{hadzihasanovicModelCoherentWalking2024} that show that the strict
  version of \(\Equiv^{n+1}\) is contractible, and the work of Fujii et
  al.~\cite[Theorem~5.4.9]{fujii$o$equifibrationsStrictWeak2025}, who show that
  the right class of the weak factorisation system cofibrantly generated by the
  set
  \begin{equation}\label{eq:gen-trivial-cofibrations}
    J = \set{ K\trivCof_n \colon F\Disk^{n} \to K\Equiv^{n+1} \mid n \in\Nat}
  \end{equation}
  is that of \emph{equifibrations}: morphisms \(f\colon X\to Y\) satisfying that
  for every invertible \(c\colon y\to y'\) in \(Y\) and every \(x\in X\) such
  that \(f(x) = y\), there exists an invertible \(c'\colon x\to x'\) such that
  \(f(c') = c\).
\end{remark}

\begin{proposition}\label{prop:inv-gens-are-inv}
    In a computad \(C\) with invertible generators,
    cells of the form \(\fwd_{n+1}(i)\) for \(i\in \iSet_{n+1}(C)\)
    are invertible.
\end{proposition}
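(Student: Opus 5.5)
The plan is to use the coinductive nature of invertibility structures. By the remark following Definition~\ref{def:invertibility-structure}, \(\Inv_{X,-}\) is the carrier of the terminal coalgebra of the endofunctor \(G_X\) on \(\Set\downarrow X^+\), with \(X = \iKomp C\) (whose underlying globular set is \(\iCell(C)\), and whose composites and identities restrict to those of Section~\ref{sec:definition}). It therefore suffices to exhibit a \(G_X\)\=/coalgebra \(W\) whose component at a cell \(f\) is nonempty whenever \(f = \fwd_{n+1}(i)\) for some \(i\in\iSet_{n+1}(C)\). The unique coalgebra morphism \(W\to\Inv_{X,-}\) then produces the required invertibility structures.

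I would take \(W\) to be the family indexed by positive-dimensional cells with
\[ W_f = \set{(n,i) \such i\in\iSet_{n+1}(\tr_{n+1}C),\ \fwd_{n+1}(i) = f}, \]
that is, \(W\) is essentially \(\coprod_{n}\iSet_{n+1}(\tr_{n+1}C)\) fibred over cells via \(\fwd\). The coalgebra structure \(W_f\to G_X(W)_f\) sends \((n,i)\) to the following data:
\begin{itemize}
    \item the left inverse \(\linv_{n+2}(i)\), taken in the appropriate truncation and viewed as a cell of \(C\) via the skeleton identifications of Proposition~\ref{prop:tr-is-rali} and Property~\ref{item:prop-tr-adjunction};
    \item the left unit cell \(\fwd_{n+2}(\lunit_{n+2}(i))\), with the witness \((n+1,\lunit_{n+2}(i))\in W_{\fwd_{n+2}(\lunit_{n+2}(i))}\);
    \item symmetrically, \(\rinv_{n+2}(i)\), \(\fwd_{n+2}(\runit_{n+2}(i))\) and \((n+1,\runit_{n+2}(i))\).
\end{itemize}
(I index carefully: if \(i\in\iSet_{n+1}\), then \(\fwd_{n+1}(i)\) is an \((n+1)\)\=/cell, its inverses are \(\linv_{n+1}(i),\rinv_{n+1}(i)\), and the units are \(\fwd_{n+2}(\lunit_{n+2}(i))\), \(\fwd_{n+2}(\runit_{n+2}(i))\).)

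The key verification is that these data have the correct boundaries, so that they really are elements of \(G_X(W)_f\).
\begin{itemize}
    \item By Property~\ref{item:prop-src-tgt-fwd-linv-rinv}, \(\linv_{n+1}(i)\) and \(\rinv_{n+1}(i)\) run from \(\tgt_n f\) to \(\src_n f\).
    \item By the definition~\eqref{eq:ibdry-def} of \(\ibdry\) together with \(\bdry(\fwd(j)) = \ibdry(j)\), the cell \(\fwd_{n+2}(\lunit_{n+2}(i))\) has source \(\linv_{n+1}(i)\compCell_n f\) and target \(\idCell_{n+1}(\tgt_n f)\).
    \item Likewise \(\fwd_{n+2}(\runit_{n+2}(i))\) goes from \(f\compCell_n\rinv_{n+1}(i)\) to \(\idCell_{n+1}(\src_n f)\), which is exactly the shape required in \(G_X\) (the definition writes \(\idCell(y)\) there, but the source of the right unit forces the identity on \(x\)).
\end{itemize}

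The main obstacle is bookkeeping. First, one must check that the composites and identities appearing in \(G_X\), defined in a general \(\omega\)\=/category via~\eqref{eq:id-comp-category}, coincide with the \(\compCell\) and \(\idCell\) used in \(\ibdry\); the paper asserts this restriction property. Second, one must check that cells and invertibility elements of the truncations \(\tr_{n+1}C\) are compatible with those of \(C\). Once this is in place, the proposition follows from terminality, with no need for an explicit corecursive construction.
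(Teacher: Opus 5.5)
Your proposal is correct and is essentially the paper's proof, which takes the coalgebra \(I_{C,f} = \set{i\in\iSet_{n+1}(C)\such \fwd i = f}\) with structure map \(i\mapsto((\linv i,\fwd\lunit i,\lunit i),(\rinv i,\fwd\runit i,\runit i))\) and concludes by terminality of \(\Inv_{\iKomp C}\). You go slightly further than the paper by making the dimension index and the boundary checks explicit; the \(\linv_{n+2}(i)\) in your first bullet should read \(\linv_{n+1}(i)\), as your own parenthetical notes, and you are right that the right unit should target \(\idCell_{n+1}(x)\) rather than \(\idCell_{n+1}(y)\).
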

\begin{proof}
  Consider the \(G_{\iKomp C}\)\=/coalgebra given by:
  \begin{equation}\label{eq:inv-gens-are-inv-family}
    \begin{gathered}
      I_{C,f} = \set{i\in \iSet_{n+1}(C) \such \fwd i = f} \\
      i\mapsto((\linv i,\fwd\lunit i,\lunit i),(\rinv i,\fwd\runit i,\runit i))
    \end{gathered}
  \end{equation}
  By definition of \(\Inv_{\iKomp C}\) as a terminal coalgebra, there exists a
  unique coalgebra morphism \(\inv_C\colon I_C\to \Inv_{\iKomp C}\), showing
  every cell of the form \(\fwd_{n+1}(i)\) is invertible.
\end{proof}

\begin{corollary}\label{cor:all-gen-invertible-is-groupoid}
    In a computad that has no
    positive-dimensional ordinary generators, every cell is invertible.
\end{corollary}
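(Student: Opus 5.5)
We show that every positive-dimensional cell of \(\iKomp C\) is invertible. By Corollary~\ref{cor:leinster-comparision} and the construction, the underlying globular set of \(\iKomp C\) is \(\iCell(C)\). The plan is to construct a \(G_{\iKomp C}\)\=/coalgebra whose carrier is inhabited at every positive-dimensional cell. Terminality of \(\Inv_{\iKomp C}\) then gives a coalgebra morphism into it, and that morphism provides the invertibility structures.

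\textbf{Reduction.} Invertible cells are closed under the operations of a weak \(\omega\)\=/category. Concretely, we prove by induction on the construction of \(\iCell_{n+1}(C)_{n+1}\) that every top-dimensional cell of every truncation is invertible. There are five constructor cases.
\begin{enumerate}
    \item The case \(\gen_{n+1}(v)\) cannot occur for \(n+1>0\), by hypothesis.
    \item The case \(\fwd_{n+1}(i)\) is handled by Proposition~\ref{prop:inv-gens-are-inv}.
    \item The cases \(\linv_{n+1}(i)\) and \(\rinv_{n+1}(i)\) need a symmetric version of that proposition: a left (or right) inverse of a coinductively invertible cell is itself invertible. I would prove this using the two-sided characterisation cited in Remark~\ref{rmk:2sided-invertibility}, due to Rice and Fujii et al., which shows that left and right inverses are equivalent and invertible. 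The alternative is to extend the coalgebra \(I_C\) of Proposition~\ref{prop:inv-gens-are-inv} to include \(\linv i\) and \(\rinv i\) explicitly.
    \item The remaining case is the coherence cells \(\coh_{B,A}[f]\).
\end{enumerate}

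\textbf{Coherence cells.} This case splits by dimension.
\begin{itemize}
    \item If \(\dim B\le n\), the cell is a coherence between full spheres. Such cells are invertible because their inverse is \(\coh_{B,A^{\op}}[f]\), and the cancellation witnesses are again coherences of lower-dimensional trees. The proof is a coinduction, exactly as in the ordinary case treated by Benjamin, Finster and Mimram: take the family of all coherence cells with \(\dim B\) below the cell dimension, and map each to its opposite coherence together with the coherence witnessing cancellation.
    \item If \(\dim B=n+1\), the cell is a composite of the top-dimensional cells \(f_V(p)\). These are invertible by the structural induction hypothesis. So we need closure of invertible cells under composition by coherences whose arguments are invertible.
\end{itemize}

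\textbf{Main obstacle.} I expect the hardest step to be the closure of invertibles under arbitrary coherence operations. My first attempt would be to define a single coalgebra whose carrier at a cell \(c\) is the set of ``syntactic invertibility witnesses'': derivations showing that \(c\) is built from \(\fwd\), \(\linv\), \(\rinv\), or from coherences applied to cells that themselves have witnesses. The coalgebra structure would be defined by recursion on the derivation. The inverse of \(\coh_{B,A}[f]\) with \(\dim B=n+1\) is obtained from \(\coh_{B,A^{\op}}\) applied to the inverses of the arguments, after a reversal along the tree (an \(\op\)\=/duality on \(\Pos(B)\) in the relevant dimension). The unit witnesses are coherence cells over the glued pasting diagram, and those are again composites of witnessed cells.

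Making the reversed tree precise is technical. If it proves too technical, I would fall back on first showing closure under binary composition \(\compCell^n_k\) and under whiskering with identities, adapting Fujii et al. Then, once the dimension-\(\le n\) coherences are known to be invertible, every coherence cell over a tree \(B\) can be written, up to an invertible coherence, as an iterated binary composite. Invertibility is closed under composition with invertible cells, which completes the argument.
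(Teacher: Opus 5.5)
Your proposal is correct and follows the paper's proof. Both proceed by structural recursion on the constructors of \(\iCell_{n+1}(C)_{n+1}\): \(\fwd\) is handled by Proposition~\ref{prop:inv-gens-are-inv}, \(\linv\) and \(\rinv\) by the result of Fujii et al.\ that one-sided inverses of invertible cells are invertible, and \(\coh_{B,A}[f]\) by closure of invertible cells under coherence operations with invertible top-dimensional arguments. The paper simply cites Benjamin and Markakis for that closure, which covers both of your subcases at once since the hypothesis is vacuous when \(\dim B\le n\), so your sketched reproofs via opposite spheres and reversed trees are optional.
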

\begin{proof}
  We construct an invertibility structure on every cell by structural recursion.
  For cells of the form \(\fwd_{n+1}(i)\) we use the one constructed in
  Proposition~\ref{prop:inv-gens-are-inv}. For their inverses
  \(\rinv_{n+1}(i)\) and \(\linv_{n+1}(i)\) such a structure has been
  constructed by Fujii et al.~\cite[Proposition~3.3.2]{
  fujiiWeaklyInvertibleCells2023}. Finally, for cells of the form
  \(\coh_{B,A}[f]\), we may assume that an invertibility structure is given on
  \(f_V(p)\) for every position \(p\in \Pos_{n+1}(B)\). Out of these structures,
  Benjamin and Markakis~\cite{benjaminInvertibleCells$omega$categories2024}
  construct an invertibility structure on \(\coh_{B,A}[f]\).
\end{proof}

Morphisms of \(\omega\)\=/categories \(f\colon X\to Y\) preserve invertibility.
Such a morphism gives rise to a morphism of endofunctors
\((\Sigma_f,\alpha_f)\colon G_X\to G_Y\) consisting of the reindexing functor:
\begin{equation}\label{eq:Inv-functorial-reindexing}
\begin{aligned}
    \Sigma_f&\colon \Set\downarrow X^+
        \to \Set\downarrow Y^+ &&&
    (\Sigma_fW)_y &= \coprod_{f(x) = y} W_x
\end{aligned}
\end{equation}
and the natural transformation
\(
\alpha_f\colon \Sigma_f G_X \Rightarrow G_Y \Sigma_f
\)
whose component at a family \(W\) over \(X^+\) and some \(y\in Y_{n+1}\) is
given by:
\begin{equation}\label{eq:Inv-functorial-reindexing-transformation}
  \begin{split}
    \alpha_{f,W,y}(x,&((x_l,x_L,w_L),(x_r,x_R,w_R))) \\
      &= ((f(x_l),f(x_L),(x_L,w_L)),(f(x_r),f(x_R),(x_R,w_R)))
  \end{split}
\end{equation}
This is well-defined since \(f\) preserves composition and identities. This
morphism defines a functor at the level of coalgebras:
\begin{equation}\label{eq:Inv-functorial-coalgebra-reindexing}
    \begin{gathered}
        \Sigma_f \colon \coAlg(G_X) \to \coAlg(G_Y) \\
        \Sigma_f(W,\beta) = (\Sigma_f W, \alpha_f\circ \Sigma_f\beta)
    \end{gathered}
\end{equation}
and hence by terminality of \(\Inv_Y\) a morphism of coalgebras:
\begin{equation}\label{eq:Inv-functorial}
  \Inv_f\colon\Sigma_f \Inv_X \to \Inv_Y
\end{equation}
sending an invertibility structure on some cell \(x\in X_{n+1}\) to an
invertibility structure on the cell \(f(x)\). It is easy to show that this
assignment is functorial by observing that for every pair of composable
morphisms \(f\colon X\to Y\) and \(g\colon Y\to Z\), the following
equations hold on the level of families and hence on the level of coalgebras as
well:
\begin{align}\label{eq:Inv-functorial-comp-equations}
    \begin{aligned}
        \Sigma_{\id} &= \id \\
        \alpha_{\id} &= \id
    \end{aligned} &&
    \begin{aligned}
        \Sigma_{g\circ f} &= \Sigma_g \circ \Sigma_f \\
        \alpha_{g\circ f} &= \alpha_g \Sigma_f \circ\ \Sigma_g \alpha_f
    \end{aligned}
\end{align}
By the universal property of \(\Inv_Z\), we conclude therefore that:
\begin{align}\label{eq:Inv-functorial-functoriality}
    \Inv_{\id} & = \id &
    \Inv_{gf} &= \Inv_g \circ\ \Sigma_g \Inv_f
\end{align}
Moreover, specializing to the case where \(f\colon C\to D\) is a morphism of
computads with invertible generators, we can check that \(\iSet(f)\) gives rise
to a morphism \(I_f\colon\Sigma_{\iKomp f} I_C \to I_D\) between the coalgebras
defined in the proof of Proposition~\ref{prop:inv-gens-are-inv}. By terminality,
we conclude that the following square commutes:
\begin{equation}\label{eq:Inv-of-iSet}
    \begin{tikzcd}
        \Sigma_{\iKomp f} I_C \arrow[r, "I_f"] \arrow[d, "\Sigma_f\inv_C"'] &
        I_D \arrow[d, "\inv_D"] \\
        \Sigma_{\iKomp f} \Inv_C \arrow[r, "\Inv_{\iKomp f}"] &
        \Inv_D
    \end{tikzcd}
\end{equation}

\begin{lemma}\label{lem:morph-of-computads-determined-gen-Inv}
    Let \(C\) be a computad with invertible generators and \(X\) an
    \(\omega\)\=/category. If a pair of morphisms \(f,g\colon \iKomp C\to X\)
    agree on generators of \(C\) and for every \(i\in \iSet_{n+1}(C)\), we have
    that
    \(
        \Inv_f(\inv_C(i)) = \Inv_g(\inv_C(i))
    \)
    then \(f = g\).
\end{lemma}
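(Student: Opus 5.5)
Since \(\iKomp C\) is the free \(\omega\)\=/category on \(C\), a morphism \(\iKomp C\to X\) is determined by a morphism of computads \(C\to \iCptd\)-side data; more concretely, I would use the adjunction \(\iKomp\dashv\) (forgetful to \(\iComp\) through \(\iCell\)) and reduce to showing that \(f\) and \(g\) agree on every cell of \(\iCell(C)\). Indeed \(\iKomp C\) has underlying globular set \(\iCell(C)\), and two morphisms of \(\omega\)\=/categories agreeing on underlying globular sets are equal since the forgetful functor \(U\) is faithful. So the plan is to prove, by recursion on the dimension \(n\) and, within each dimension, by structural induction on the inductively defined set \(\iCell_{n+1}(C)_{n+1}\), that \(f(c)=g(c)\) for every cell \(c\). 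In dimension \(0\) every cell is a generator, so the hypothesis applies.

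For the inductive step I would go through the five constructors. Cells \(\gen_{n+1}(v)\) are handled by the hypothesis that \(f,g\) agree on generators. For a cell \(\fwd_{n+1}(i)\), \(\linv_{n+1}(i)\) or \(\rinv_{n+1}(i)\) with \(i\in\iSet_{n+1}(C)\), I would use that \(\inv_C(i)\) is an invertibility structure on \(\fwd_{n+1}(i)\) whose destructors are, by construction of the coalgebra \(I_C\) in Proposition~\ref{prop:inv-gens-are-inv} and the fact that the map to the terminal coalgebra is a coalgebra morphism, exactly \(\linv(\inv_C i)=\linv_{n+1}(i)\), \(\rinv(\inv_C i)=\rinv_{n+1}(i)\). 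Since \(\Inv_f\) is a coalgebra morphism built from \(\alpha_f\), the underlying cell of \(\Inv_f(\inv_C(i))\) is \(f(\fwd_{n+1} i)\), and its left and right inverses are \(f(\linv_{n+1} i)\) and \(f(\rinv_{n+1} i)\); similarly for \(g\). Applying the destructors (and the indexing of \(\Inv\) over cells) to the hypothesis \(\Inv_f(\inv_C(i))=\Inv_g(\inv_C(i))\) then yields agreement on all three cells. For a coherence cell \(\coh_{B,A}[h]\), I would note that in \(\iKomp C\) it equals the image of \(\coh_{B,A}[\id]\in\iKomp\iCptd\Pos(B)\) under \(\iKomp(h)\); so \(f(\coh_{B,A}[h])\) is determined by the composite \(f\circ\iKomp(h)\), which in turn is determined (via the adjunction \(F\dashv U\) and \(\iKomp\iCptd=F\)) by its values on the positions of \(\Pos(B)\), i.e.\ by \(f\) on the cells \(h(p)\). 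These have dimension \(\le n+1\) and are structurally smaller (the cells \(h_V(p)\) are earlier in the induction, lower cells by the dimension recursion), so \(f\circ\iKomp h=g\circ\iKomp h\) and hence \(f,g\) agree on the coherence cell.

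The main obstacle I expect is the second case: extracting equality of the individual cells \(f(\linv_{n+1}i)\), etc., from equality of the coinductive structures requires unfolding \(\Inv_f\) one step, i.e.\ checking that the terminal-coalgebra morphism commutes with the structure maps so that the first destructor of \(\Inv_f(\inv_C i)\) is \(\alpha_f\) applied to the destructor of \(\inv_C i\). I would make this precise using the commuting square defining \(\Inv_f\) as a coalgebra morphism \(\Sigma_f\Inv_X\to\Inv_Y\) together with the square for \(\inv_C\), and reading off the components of \(G_X\) in~\eqref{eq:invertibility-endofunctor}. Note only \(\fwd\), \(\linv\), \(\rinv\) need this; the unit cells \(\fwd(\lunit i)\) are themselves of the form \(\fwd_{n+2}(\cdot)\) and are covered at the next dimension (alternatively, also by the hypothesis).
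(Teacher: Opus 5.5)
Your proof is correct and is essentially the paper's argument. The paper first identifies \(\iKomp C\) with \(\Komp\Unf C\), so that the cells \(\fwd(i)\), \(\linv(i)\), \(\rinv(i)\) become ordinary generators on which \(f\) and \(g\) agree by hypothesis; it then handles coherence cells by the same structural recursion, via the formula \(f(\coh_{B,A}[h]) = \gamma_X(\coh_{B,A}[\iCptd(f\circ h^\dagger)])\), which is equivalent to your observation that \(\coh_{B,A}[h]\) is the image of \(\coh_{B,A}[\id]\) under \(\iKomp(h)\). Your direct induction over the five constructors bypasses the coreflection \(\Unf\), and it makes explicit the one-step unfolding of \(\Inv_f\) through \(\alpha_f\) that the paper leaves implicit when it says the hypothesis gives agreement on all generators of \(\Unf C\). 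Your opening appeal to an adjunction with \(\iKomp\) as left adjoint is unnecessary and is not available; faithfulness of \(U\) is all you actually use.
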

\begin{proof}
    We recall first that \(\iKomp C = \iKomp \Unf C\). The hypothesis of this
    lemma implies that the morphisms \(f\) and \(g\) agree on every generator
    of the ordinary computad \(\Unf C\). Being a morphism of
    \(\omega\)\=/categories, \(f\) must be given on cells of the form
    \(\coh_{B,A}[h]\) by the recursive formula:
    \begin{equation}\label{eq:morph-of-computads-determined-gen-Inv-coh}
        f(\coh_{B,A}[h]) =
            \gamma_X(\coh_{B,A}[\iCptd_{n+1+d}(f\circ h^\dagger)])
    \end{equation}
    where \(h^\dagger\colon \Pos B\to \Cell \Unf C\) is the transpose of \(h\)
    under the adjunction \(\Cptd\dashv \Cell\). The same formula holds for \(g\)
    as well, so by structural recursion, we conclude that \(f = g \).
\end{proof}

\begin{lemma}\label{lem:univ-prop-computads}
    Let \(C\) an \((n+1)\)\=/computad with invertible generators, and let
    \(X\) be an \(\omega\)\=/category. Morphisms \(f\colon \iKomp C\to X\)
    are in natural bijection with data of the form:
    \begin{itemize}
        \item a morphism \(f_n\colon \iKomp \sk_n \tr_n C\to X\)
        \item a function \(f_V\colon V_{n+1}^C \to X_{n+1}\)
        \item a function \(f_U\colon U_{n+1}^C \to \coprod_{x\in X_{n+1}}\Inv_{X,x}\)
    \end{itemize}
    subject to the source and target equations:
    \begin{equation}\label{eq:univ-prop-computads-source-target}
        \begin{aligned}
            \src f_V(v) &= f_n(\src \gen v) &
            \src {\pr_1}f_U(u) &= f_n(\src (\fwd (\igen u))) \\
            \tgt f_V(v) &= f_n(\tgt \gen v) &
            \tgt \pr_1f_U(u) &= f_n(\tgt (\fwd (\igen u)))
        \end{aligned}
    \end{equation}
\end{lemma}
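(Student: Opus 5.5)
The plan is to define maps in both directions and check that they are mutually inverse, using the coreflection of Theorem~\ref{thm:unfolding} for existence and Lemma~\ref{lem:morph-of-computads-determined-gen-Inv} for uniqueness.

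\textbf{From morphisms to data.} Given \(f\colon\iKomp C\to X\), we set \(f_n = f\circ\iKomp(\counitSk_{n,C})\), where \(\counitSk\colon\sk_n\tr_nC\to C\) is the counit of Proposition~\ref{prop:tr-is-rali}. We set \(f_V(v) = f(\gen_{n+1}v)\). For \(u\in U_{n+1}^C\) we set
\[
f_U(u) = \bigl(f(\fwd\igen u),\ \Inv_f(\inv_C(\igen u))\bigr).
\]
The source and target equations follow because \(\counitSk\) is the identity in dimensions \(\le n\) and \(f\) preserves boundaries. By Lemma~\ref{lem:morph-of-computads-determined-gen-Inv}, this assignment is injective, once we check that the hypotheses of that lemma follow from equality of the data. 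Equality on generators is immediate. For invertibility structures, the units \(\lunit_{n+1}(i)\) and \(\runit_{n+1}(i)\) with \(i\in\iSet_n(C_n)\) come from \(\tr_nC\), so they are determined by \(f_n\) via naturality~\eqref{eq:Inv-of-iSet} applied to \(\counitSk\). The elements \(\igen u\) are covered by \(f_U\). Lower-dimensional invertibility data likewise factor through \(\sk_n\tr_nC\).

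\textbf{From data to morphisms.} By Corollary~\ref{cor:unf-preserves-Komp}, \(\iKomp C\cong\Komp\Unf C\). By the free/forgetful adjunctions, a morphism \(\Komp\Unf C\to X\) is the same as a morphism \(\Unf C\to\Cptd\) in the universal sense, i.e.\ an assignment of generators of \(\Unf C\) to cells of \(X\) compatible with boundaries, recursively in dimension (this is the universal property of ordinary computads, cf.\ Markakis / Dean et al.). The generators of \(\Unf_{n+1}C\) are \(V_{n+1}^C\amalg\iSet_{n+1}(C)^{\amalg3}\), together with those of \(\Unf_nC_n\), which \(f_n\) handles since \(\Unf\) commutes with \(\sk_n\tr_n\). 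We send \(v\) to \(f_V(v)\). For the three copies of \(\iSet_{n+1}(C)\), we need to assign a forward cell, a left inverse and a right inverse:
\begin{itemize}
\item for \(\igen u\), with \(f_U(u)=(x,j)\), we assign \((x,\linv j,\rinv j)\);
\item for \(\lunit_{n+1}(i)\) and \(\runit_{n+1}(i)\), with \(i\in\iSet_n(C_n)\), we use the invertibility structure \(\Inv_{f_n}(\inv(i))\) and its destructors \(\lunit,\ilunit\) (resp.\ \(\runit,\irunit\)). We take the forward cell to be \(\lunit\) of that structure, and its inverses from \(\ilunit\).
\end{itemize}
Boundary compatibility uses the defining boundaries~\eqref{eq:ibdry-def}, the equations~\eqref{eq:univ-prop-computads-source-target}, and the fact that \(f_n\) preserves composites and identities~\eqref{eq:id-comp-category}.

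\textbf{Inverse check and main obstacle.} It remains to show that the resulting \(f\) recovers \(f_U\). This is the main obstacle: we must prove \(\Inv_f(\inv_C(\igen u)) = j\), an equality in a terminal coalgebra. I would prove it by coinduction. The family of pairs \((\Inv_f(\inv_C(i)),\,\text{the structure prescribed for }i)\) over \(i\in\iSet_{n+1}(C)\) is extended to the higher units \(\lunit_{m}\) and \(\runit_{m}\) in all dimensions \(m>n+1\). At those higher units, by construction, \(f\) copies the destructors of the prescribed structure. This family forms a bisimulation, i.e.\ a \(G_X\)-coalgebra on the span. Terminality then forces the two legs to agree.

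Naturality in \(X\) and \(C\) follows from functoriality of \(\Inv\)~\eqref{eq:Inv-functorial-functoriality}.
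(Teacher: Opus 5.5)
Your route differs from the paper's in the existence direction. The paper builds \(\rho_{d,f}\) by structural recursion on the cells of \(C\), sending coherence cells to \(\gamma_X(\coh_{B,A}[\dots])\) and checking the algebra square by hand. You instead pass to \(\Komp\Unf C\) and use the cell-complex universal property of ordinary computads. Your bisimulation for \(\Inv_f(\inv_C(\igen u))=j\) is the same terminality argument as the paper's identity \(\rho'_{d,f}(i)=\Inv_{\rho_{\omega,f}}(\inv(i))\). That route is viable, but as written it rests on a false claim: \(\Unf\) commutes with truncation, not with \(\sk_n\tr_n\). As soon as \(C_n\) has an invertible generator, \(\iSet_m(\sk_n\tr_nC)\) is nonempty for every \(m>n\), because it contains the iterated units \(\lunit_m(\dots)\), \(\runit_m(\dots)\). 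So \(\Unf(\sk_n\tr_nC)\) has generators \(\fwd k,\linv k,\rinv k\) in every dimension above \(n\); already \(\Unf\Equiv^1\) has generators in every positive dimension. For the same reason, the generators of \(\Unf C\) are not just those of \(\Unf_nC_n\) together with \(V^C_{n+1}\amalg\iSet_{n+1}(C)^{\amalg 3}\).

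This breaks two steps. First, your construction assigns values only up to dimension \(n+1\). You must also assign \(\fwd k,\linv k,\rinv k\) for every iterated unit \(k\in\iSet_m(C)\) with \(m>n+1\), by iterating \(\ilunit,\irunit\) on \(\pr_2 f_U(u)\) and on \(\Inv_{f_n}(\inv(i))\); this is the paper's \(\rho'_{d,f}\). Your inverse check uses these values \emph{by construction}, although they were never defined.

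Second, and more seriously, you never verify that the reconstructed morphism restricts to \(f_n\) along \(\iKomp\counitSk\). This is not merely agreement on the generators of \(\Unf_nC_n\). The morphism \(f_n\) is also defined on \(\fwd k,\linv k,\rinv k\) for all iterated units \(k\) of invertible elements of \(C_n\). To see that your assignment agrees with \(f_n\) there, you need an induction on dimension using that \(\inv\) and \(\Inv_{f_n}\) commute with the destructors, e.g.\ \(\Inv_{f_n}(\inv(\lunit k))=\ilunit(\Inv_{f_n}(\inv(k)))\). This is precisely the last step of the paper's proof.

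The same destructor compatibility is missing from your injectivity argument. It does not cover the iterated units of \(\igen u\) in dimensions above \(n+1\), which Lemma~\ref{lem:morph-of-computads-determined-gen-Inv} needs. All of this is repairable with the coinductive tool you already use, but without it the round trip from data to morphisms and back is not established.
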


\begin{proof}
    Given a morphism \(f\colon \iKomp C\to X\), we obtain such data by
    composing \(f\) with the counit \(\counitSk_{n,C}\) of the skeleton
    adjunction, looking at the image of the generators, and applying
    \(\Inv_f\) to the invertibility structures on the generators defined in
    Proposition~\ref{prop:inv-gens-are-inv}.

    Conversely, we will construct from the data \(f = (f_n,f_V,f_U)\)
    by induction on the dimension \(-1\le d \le\omega\) a morphism of globular
    sets and a function:
    \begin{align}\label{eq:univ-prop-computads-rho}
        \rho_{d,f}&\colon \Cell_{n+1+d} C\to \tr_{n+1+d}X &
        \rho'_{d,f}&\colon \iSet_{n+1+d}(C)\to \Inv_X
    \end{align}
    satisfying the following conditions:
    \begin{enumerate}
        \item \(\tr_{n+1+d'}\rho_{d,f} = \rho_{d',f}\) for every \(d'\le d\)
        \item the following square commutes where \(\gamma_X\colon TX\to X\) is
            the algebra structure of \(X\), \(T_{n+1+d}\) is the monad
            induced by the adjunction \(\Cptd_{n+1+d}\dashv \Cell_{n+1+d}\) and
            \(\counitCell_{n+1+d}\) is the counit of this adjunction:
            \begin{equation}\label{eq:univ-prop-computads-rho-comm-square}
                \begin{tikzcd}[column sep = 10em]
                    T_{n+1+d}\Cell_{n+1+d} C & \Cell_{n+1+d} C \\
                    T_{n+1+d}\tr_{n+1+d} X & \tr_{n+1+d} X
                    \arrow[from =1-1, to=1-2,
                        "\Cell_{n+1+d}\counitCell_{n+1+d}"]
                    \arrow[from =2-1, to=2-2, "\tr_{n+1+d}\gamma_X"']
                    \arrow[from= 1-1, to =2-1, "T_{n+1+d}\rho_{d,f}"']
                    \arrow[from= 1-2, to =2-2, "\rho_{d,f}"]
                \end{tikzcd}
            \end{equation}
        \item \(\rho'_{d,f}(i)\) is an invertibility structure on
            \(\rho_{d,f}(\fwd i)\) with inverses \(\rho_{d,f}(\linv i)\) and
            \(\rho_{d,f}(\rinv i)\) for every
            \(i\in \iSet_{n+1+d}(C)\)
    \end{enumerate}

    We define first \(\rho_{-1,f} = \tr_{n+1}f_n\) and
    \(\rho'_{-1,f}(i) = \Inv_{f_n}(\inv(i))\). This satisfies the second
    condition, being
    the truncation of a morphism of \(\omega\)\=/categories, and the other two
    by definition. Suppose now that \(d\in \Nat\). Then we define
    \(\rho'_{d,f}\) on cells of the form \(\lunit i\) and \(\runit i\)
    respectively by:
    \begin{equation}\label{eq:univ-prop-computads-rho-prime}
        \begin{aligned}
            \rho'_{d,f}(\lunit i) &= \ilunit(\rho'_{d-1,f}(i)) &
            \rho'_{d,f}(\runit i) &= \irunit(\rho'_{d-1,f}(i))
        \end{aligned}
    \end{equation}
    We define also when \(d=0\) the function \(\rho'_{0,f}\) on cells of the
    form \(\igen u\) to be given by \(f_U(u)\). We then define \(\rho_{d,f}\) on
    cells of dimension at most \(n+d\) to be given by \(\rho_{d-1,f}\) and on
    cells of dimension \(n+1+d\) by structural recursion. When \(d = 0\), for
    cells of the form \(\gen v\) for \(v\in V_{n+1+d}^C\), we define
    \(\rho_{0,f}\) to be given by \(f_V(v)\). For cells of the form \(\fwd i\),
    \(\linv i\) and \(\rinv i\), we define it to be \(\fwd\), \(\linv\) and
    \(\rinv\) of \(\rho'_{0,f}(i)\) respectively. This assignment commutes with
    the source and target morphisms by the
    equations~\eqref{eq:univ-prop-computads-source-target} and the third
    condition. Finally, on cells of the form \(\coh_{B,A}[g]\), we may assume
    recursively that \(\rho_{d,f}\) has already been defined on \(g(p)\) for
    every position \(p\in \Pos_{n+1+d}(B)\) and preserves sources and targets.
    We then define \(\rho_{d,f}\) to be:
    \begin{equation}\label{eq:univ-prop-computads-rho-coh}
        \rho_{d,f}(\coh_{B,A}[g]) =
            \gamma_X(\coh_{B,A}[\iCptd_{n+1+d}(\rho_{d,f} \circ g^\dagger)])
    \end{equation}
    where \(g^\dagger\colon \tr_{n+1+d}\Pos B\to \Cell_{n+1+d}C\) is the
    transpose of \(g\) under the adjunction \(\Cptd_{n+1+d}\dashv
    \Cell_{n+1+d}\). This assignment is compatible with the source
    functions by:
    \begin{equation}\label{eq:univ-prop-computads-rho-coh-bdry}
        \begin{aligned}
            \src (\rho_{d,f}&(\coh_{B,A}[g])) \\
                &= \gamma_X(T_{n+d}(\tr_{n+d}(\rho_{d,f}g^\dagger))(\src A)) \\
                &= \rho_{d-1,f}(\iCell_{n+d}(\counitCell_{n+d}\circ
                    \iCptd_{n+d}(\tr_{n+d} g^\dagger))(\src A)) \\
                &= \rho_{d-1,f}(\iCell_{n+d}(\tr_{n+d} g)(\src A)) \\
                &= \rho_{d-1,f}(\src(\coh_{B,A}[g]))
        \end{aligned}
    \end{equation}
    and the target functions similarly. This concludes the definition of
    \(\rho_{d,f}\). It satisfies the first and third condiiton by definition.
    The second one holds for generators of \(\Cptd_{n+1+d}\Cell_{n+1+d}C\) by
    the triangle equations of the adjunction. It holds for cells of the
    form \(\coh_{B,A}[h]\) by the following computation:
    \begin{equation}\label{eq:univ-prop-computads-rho-alg-mor}
        \begin{aligned}
          \rho_{d,f}&(\iCell_{n+1+d}(\counitCell_{n+1+d})(\coh_{B,A}[h])) \\
            &= \rho_{d,f}(\coh_{B,A}[\counitCell_{n+1+d}\circ h]) \\
            &= \gamma_X(\coh_{B,A}[\iCptd_{n+1+d}(\rho_{d,f}\circ
                (\counitCell_{n+1+d}\circ h)^\dagger)]) \\
            &= \gamma_X(\coh_{B,A}[\iCptd_{n+1+d}(\rho_{d,f})\circ\iCptd_{n+1+d}
                (\counitCell_{n+1+d})\circ h^\dagger)) \\
            &= \gamma_X(T_{n+d+1}(\rho'_{{d,f}})(\coh_{B,A}[h]))
        \end{aligned}
    \end{equation}
    Finally, we define \(\rho_{\omega,f}\) to be given on cells of dimension
    at most \(n+1+d\) by \(\rho_{d,f}\). This is well-defined by the first
    condition, and it is a morphism of \(\omega\)\=/categories \(\iKomp C\to X\)
    by the third condition. Moreover, it fully determines the family
    \(\rho'_{d,f}\) by the formula:
    \begin{equation}\label{eq:univ-prop-computads-inv-rho}
        \rho'_{d,f}(i) = \Inv_{\rho_{\omega,f}}(\inv(i))
    \end{equation}
    since both sides are morphisms of \(G_X\)\=/coalgebras into the terminal
    one \(\Inv_X\). The left hand side is a morphism of coalgebras by
    equations~\eqref{eq:univ-prop-computads-rho-prime}.

    Given a morphism \(h\colon \iKomp C\to X\) and letting \(g\) the data that
    it generates, we have that the morphisms \(h\) and \(\rho_{\omega,g}\) agree
    on every generator of \(C\) by definition of \(\rho_{0,g}\). Moreover, by
    induction on \(d\in \Nat\), we get that \(\rho'_{d,g}(i) = \Inv_h(\inv(i))\)
    using equations~\eqref{eq:univ-prop-computads-rho-prime}.
    Therefore, by Lemma~\ref{lem:morph-of-computads-determined-gen-Inv}, we
    conclude that \(h = \rho_{\omega,g}\).

    Conversely, given data \(g = (g_n, g_V, g_U)\) we can see that the data
    associated to \(\rho_{\omega,g}\) is precisely \(g\). For the second
    component, this is true by definition of \(\rho_{0,g}\). For the third
    component, we use equations~\eqref{eq:univ-prop-computads-inv-rho} and the
    definition of \(\rho'_{0,g}(\igen u)\) to conclude that they are equal.
    Finally, the morphisms \(\rho_{\omega,g}\circ \iKomp\counitSk_{n+1,C}\) and
    \(g_n\) agree
    on every generator by definition of \(\rho_{-1,g}\). They also agree on
    \(\iSet_{n}(\tr_nC)\) by definition of \(\rho'_{-1,g}\). Finally, by induction
    on \(d\in \Nat\), we can see that they also agree on
    \(\iSet_{n+1+d}(\tr_{n+d}C)\) using
    equations~\eqref{eq:univ-prop-computads-rho-prime}. Therefore, by
    Lemma~\ref{lem:morph-of-computads-determined-gen-Inv}, we conclude that
    \(\rho_{\omega,g}\circ \iKomp\counitSk_{n+1,C} = g_n\).
\end{proof}

\begin{proposition}\label{prop:inv-struct-repr}
    For every weak \(\omega\)\=/category \(X\) and every \(x\in X_{n+1}\), there
    exists a natural isomorphism:
    \begin{equation}\label{eq:inv-struct-repr}
        \Inv_{X,x} \cong
        \set{ g\colon\iKomp\Equiv^{n+1} \to X
            \such g(\fwd (\igen (c_{n+1}))) = x}
    \end{equation}
    where \(c_{n+1}\) is the top-dimensional generator of \(\Equiv^{n+1}\).
\end{proposition}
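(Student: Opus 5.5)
The plan is to deduce the isomorphism directly from Lemma~\ref{lem:univ-prop-computads}, applied to the \((n+1)\)\=/computad \(C = \Equiv^{n+1}\). Its \(n\)\=/truncation is \(\iCptd_n(\Sphere^n)\), so \(\sk_n\tr_n\Equiv^{n+1} = \iCptd(\Sphere^n)\). There are no ordinary generators in dimension \(n+1\), and there is exactly one invertible generator \(c_{n+1}\). The lemma therefore identifies morphisms \(g\colon\iKomp\Equiv^{n+1}\to X\) with pairs \((g_n, g_U)\). Here \(g_n\colon \iKomp\iCptd(\Sphere^n) = F\Sphere^n\to X\) is, by the adjunction \(F\dashv U\) and~\eqref{eq:disk-sphere-representable}, the same as a parallel pair \((y,y')\in\Par_n(X)\). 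The other datum is an element \(g_U(c_{n+1}) = (x',j)\) with \(x'\in X_{n+1}\) and \(j\in\Inv_{X,x'}\). The source and target equations~\eqref{eq:univ-prop-computads-source-target} say exactly that \(\src x' = y\) and \(\tgt x' = y'\). So the pair \((y,y')\) is determined by \(x'\), and morphisms out of \(\iKomp\Equiv^{n+1}\) correspond bijectively to elements of \(\coprod_{x'\in X_{n+1}}\Inv_{X,x'}\).

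Next I check that, under this bijection, evaluating at \(\fwd(\igen(c_{n+1}))\) is the first projection. By the description of the bijection in the proof of Lemma~\ref{lem:univ-prop-computads}, a morphism \(g\) corresponds to \(g_U(c_{n+1}) = \Inv_g(\inv_{\Equiv}(\igen c_{n+1}))\). Here \(\inv_{\Equiv}(\igen c_{n+1})\) is the invertibility structure of Proposition~\ref{prop:inv-gens-are-inv} on the cell \(\fwd(\igen c_{n+1})\). Since \(\Inv_g\) sends an invertibility structure on a cell \(z\) to one on \(g(z)\), the first component is \(g(\fwd(\igen c_{n+1}))\). Restricting the bijection to the fibre over \(x\) then gives~\eqref{eq:inv-struct-repr}.

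For naturality, I interpret the statement as natural in \(X\) along morphisms \(h\colon X\to Y\), acting by \(\Inv_h\) on the left and by postcomposition on the right. Naturality then reduces to the functoriality equation \(\Inv_{hg} = \Inv_h\circ\Sigma_h\Inv_g\) from~\eqref{eq:Inv-functorial-functoriality}, applied to \(\inv_{\Equiv}(\igen c_{n+1})\). The naturality of the bijection in Lemma~\ref{lem:univ-prop-computads} should already encode this.

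The main obstacle I expect is the bookkeeping in the first step. I must confirm that the data component \(f_n\) of the lemma really ranges over all morphisms \(F\Sphere^n\to X\); this uses that \(\iKomp\) agrees with \(F\) on computads of the form \(\iCptd(Y)\). I must also confirm that the lemma's bijection is natural in \(X\) in the way claimed. If that naturality is not explicit, I would verify it directly using the uniqueness clause, Lemma~\ref{lem:morph-of-computads-determined-gen-Inv}: two morphisms into \(Y\) that agree on the generator images and on the transported invertibility structures must be equal.
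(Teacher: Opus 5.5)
Your proposal is correct and follows essentially the paper's route. The paper also defines the map by \(g\mapsto \Inv_g(\inv(\igen c_{n+1}))\), obtains naturality from~\eqref{eq:Inv-functorial-functoriality}, and builds the inverse with Lemma~\ref{lem:univ-prop-computads} from the source and target of \(x\); the only difference is that it re-checks \(\rho_X^{-1}\circ\rho_X=\id\) directly via Lemma~\ref{lem:morph-of-computads-determined-gen-Inv}, while you use the bijection of Lemma~\ref{lem:univ-prop-computads} as a whole and restrict it to fibres, which is equally valid.
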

\begin{proof}
    To simplify the notation, let \(E_{X,x}\) be the family on the right-hand
    side and let \(c'_{n+1} = \igen (c_{n+1})\).
    Let \(\rho_X\colon E_X\to \Inv_X\) be given by the
    formula:
    \begin{equation}\label{eq:inv-struct-repr-univ-morphism}
        \rho_X(g) = \Inv_g(\inv(c'_{n+1}))
    \end{equation}
    Naturality of \(\rho\) follows then immediately
    from functoriality of \(\Inv\):
    \begin{equation}\label{eq:inv-struct-repr-univ-morphism-naturality}
        \rho_Y(h\circ g) = \Inv_h(\rho_X(g))
    \end{equation}
    Conversely, given a cell \(x\in X_{n+1}\) and an invertibility structure
    \(i\in \Inv_{X,x}\), we can construct a morphism
    \(F\Sphere^{n-1}\to X\) corresponding to the source and
    target of \(x\). Then Lemma~\ref{lem:univ-prop-computads} applies to give
    a morphism \(\rho^{-1}_{X}(i)\colon\iKomp\Equiv^{n+1}\to X\) sending
    \(\inv(c'_{n+1})\) to \(i\). In other words,
    \begin{equation}\label{eq:inv-struct-repr-section}
        \rho_X\circ \rho^{-1}_X = \id
    \end{equation}
    To complete the proof, it remains to check that
    \(\rho^{-1}_X(\rho_X(g)) = g\) for every morphism
    \(g\colon \iKomp\Equiv^{n+1}\to X\). The two morphisms agree on generators
    of dimension at most \(n\) by definition of \(\rho^{-1}_X\). They also send
    the unique element \(c'_{n+1}\) of \(\iSet_{n+1}(\Equiv^{n+1})\) to the same
    invertibility structure. Finally, they must also agree on every element of
    \(\iSet_{n+1+d}(\Equiv^{n+1})\) by
    equations~\eqref{eq:univ-prop-computads-rho-prime} and the fact that every
    element of \(\iSet_{n+1+d}(\Equiv^{n+1})\) is obtained by iterating the
    \(\lunit\) and \(\runit\) operations on \(c'_{n+1}\). Therefore, by
    Lemma~\ref{lem:morph-of-computads-determined-gen-Inv}, we conclude that
    \begin{equation}\label{eq:inv-struct-repr-retraction}
        \rho_X^{-1}\circ \rho_X = \id
    \end{equation}
    It follows that \(\rho^{-1}_X\) is inverse to \(\rho_X\).
\end{proof}

\begin{theorem}\label{thm:free-cats-as-colims}
  Free \(\omega\)\=/categories on computads with invertible generators can be
  expressed as the following colimits:
  \begin{enumerate}
    \item \(\iKomp\sk_{-1}C\) is initial for \(C\) the unique
        \((-1)\)\=/computad.
    \item \(\iKomp\sk_0 C\) is the copower \(V_0^C \cdot F\Disk^0\)
      for every \(0\)\=/computad \(C\).
    \item For every \((n+1)\)\=/computad \(C\), there exists a pushout square
      of the form:
      \begin{equation}\label{eq:free-cats-pushout}
        \begin{tikzcd}[column sep = 5em]
          V^C_{n+1}\cdot F\Sphere^{n}
            \amalg U^C_{n+1}\cdot F\Sphere^{n} &
          \iKomp \sk_n C_n \\
          V^C_{n+1}\cdot F\Disk^{n+1}
            \amalg U^C_{n+1}\cdot \iKomp \Equiv^{n+1} &
          \iKomp \sk_{n+1} C
          \arrow[from=1-1, to=1-2, "\phi_{n+1}^C \amalg \psi_{n+1}^C"]
          \arrow[from=1-1, to=2-1, hookrightarrow, swap,
            "V^C_{n+1}\cdot F\diskInc_{n+1} \amalg
            U^C_{n+1}\cdot \equivInc_{n+1}"]
          \arrow[from=1-2, to=2-2, "\iKomp\counitSk_n^{n+1}"]
          \arrow[from=2-1, to=2-2, "\gen \amalg\igen ", swap]
          \arrow[from=1-1, to=2-2, "\ulcorner"{very near end}, phantom]
        \end{tikzcd}
      \end{equation}
    \item For every computad \(C\), the \(\omega\)\=/category \(\iKomp C\) is
        the colimit of its skeleta:
        \begin{equation}\label{eq:free-cats-colim-skeleta}
          \begin{tikzcd}
            \iKomp\sk_{-1} C_{-1} \arrow[r, "\iKomp\counitSk_{-1}^{0}"] &
            \iKomp\sk_0 C_0 \arrow[r, "\iKomp\counitSk_0^{1}"] &
            \iKomp\sk_1 C_1 \arrow[r, "\iKomp\counitSk_1^{2}"] &
            \cdots \\
            &&& \iKomp C
            \arrow[from=1-1,to=2-4, "\iKomp\counitSk_{-1}"{swap}, bend right=15]
            \arrow[from=1-2,to=2-4, "\iKomp\counitSk_{0}"{swap}, bend right=10]
            \arrow[from=1-3,to=2-4, "\iKomp\counitSk_{1}"{swap}, bend right=5]
          \end{tikzcd}
        \end{equation}
    \end{enumerate}
\end{theorem}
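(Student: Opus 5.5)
The plan is to verify each item by checking universal properties through the hom-sets \(\Cat_\omega(\iKomp(-),X)\) for an arbitrary \(\omega\)\=/category \(X\), using Lemma~\ref{lem:univ-prop-computads} as the main tool.

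\emph{Items (1) and (2).} For \(C\) the unique \((-1)\)\=/computad, \(\sk_{-1}C\) is the empty computad \(\iCptd(\emptyset)\). Since \(\iKomp\iCptd\cong F\), it follows that \(\iKomp\sk_{-1}C\cong F\emptyset\), which is initial because \(F\) is a left adjoint. For a \(0\)\=/computad \(C\) we have \(\sk_0C=\iCptd(V_0^C\cdot\Disk^0)\). Hence \(\iKomp\sk_0C\cong F(V_0^C\cdot\Disk^0)\cong V_0^C\cdot F\Disk^0\), again because \(F\) preserves coproducts.

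\emph{Item (3).} We first check that the square commutes. On the \(V\)\=/summand, this holds because \(\iKomp\) applied to a morphism \(\iCptd\Disk^{n+1}\to\sk_{n+1}C\) classifying \(\gen(v)\) restricts along \(\diskInc\) to the boundary, by Proposition~\ref{prop:representability-cell-sphere-iset}. On the \(U\)\=/summand, \(\igen(u)\) corresponds to a morphism \(\Equiv^{n+1}\to\sk_{n+1}C\), and composing with \(\equivInc_{n+1}\) yields \(\ibdry(\igen u)=\psi^C_{n+1}(u)\), again by Proposition~\ref{prop:representability-cell-sphere-iset}.

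For the pushout property, a cocone to \(X\) consists of the following data. First, a morphism \(f_n\colon\iKomp\sk_nC_n\to X\). Second, for each \(v\), a morphism \(F\Disk^{n+1}\to X\), that is, an \((n+1)\)\=/cell whose boundary is \(f_n(\phi(v))\). Third, for each \(u\), a morphism \(\iKomp\Equiv^{n+1}\to X\) restricting to \(f_n(\psi(u))\) on the sphere. By Proposition~\ref{prop:inv-struct-repr}, this last item is an element of \(\coprod_x\Inv_{X,x}\) with the prescribed source and target. This is exactly the data of Lemma~\ref{lem:univ-prop-computads}, with the source and target equations~\eqref{eq:univ-prop-computads-source-target}. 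So the lemma gives a bijection between cocones and morphisms \(\iKomp\sk_{n+1}C\to X\).

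The point requiring care is that this bijection is inverse to restriction along the legs \(\gen\amalg\igen\) and \(\iKomp\counitSk\). Concretely, we must check that the morphism \(\iKomp\Equiv^{n+1}\to X\) obtained by restricting \(f\) along \(\igen\) corresponds, under Proposition~\ref{prop:inv-struct-repr}, to \(\Inv_f(\inv(\igen u))\). This follows from the naturality square~\eqref{eq:Inv-of-iSet} applied to the morphism \(\Equiv^{n+1}\to\sk_{n+1}C\) classifying \(\igen u\). I expect this compatibility check to be the main obstacle; everything else is bookkeeping.

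\emph{Item (4).} A cocone over the chain of skeleta into \(X\) is a compatible family of morphisms \(g_n\colon\iKomp\sk_nC_n\to X\). We build the induced morphism \(\iKomp C\to X\) by gluing. On cells of dimension \(\le n\) it is given by \(g_n\); this is well defined because \(\tr_n\iCell(\sk_m C_m)=\iCell_n(C_n)\) for \(m\ge n\), by Proposition~\ref{prop:tr-is-rali} and property~\ref{item:prop-tr-adjunction}. The glued map commutes with the algebra structure because each coherence operation involves cells of bounded dimension, so this reduces to the \(g_m\) for \(m\) large enough. Uniqueness follows from Lemma~\ref{lem:morph-of-computads-determined-gen-Inv}. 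Each generator lives in some skeleton. Each invertibility structure \(\inv(i)\) is the image of \(\inv(i)\) in a skeleton under \(\Inv_{\iKomp\counitSk}\), and its image under \(\Inv_f\) is then determined by \(g_n\) through the functoriality~\eqref{eq:Inv-functorial-functoriality}.
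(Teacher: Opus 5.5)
Your proposal is correct, and for items (1)--(3) it follows the paper. Items (1) and (2) reduce to \(F\cong\iKomp\iCptd\) being a left adjoint. Item (3) is the universal property of Lemma~\ref{lem:univ-prop-computads} rephrased through Proposition~\ref{prop:inv-struct-repr}. The paper's proof cites Lemma~\ref{lem:morph-of-computads-determined-gen-Inv} at this point, but Lemma~\ref{lem:univ-prop-computads} is the one actually needed, and it is the one you use.

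Your check via the square~\eqref{eq:Inv-of-iSet} is exactly the compatibility the paper leaves implicit, and it goes through. That check shows that restricting along the \(\igen\) leg yields \(\Inv_f(\inv(\igen u))\). One small point: the left vertical map on the \(V\)-summand is \(F\sphereInc_{n+1}\); the \(\diskInc_{n+1}\) label is a slip.

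For (4) you take a different route.

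\emph{The paper's argument.} It notes that \(\iCell(\counitSk_n^k)\) is bijective in dimensions \(\le n\), so the chain is already a colimit in \(\gSet\). It then concludes because \(T\) preserves filtered colimits, so \(U\) creates them.

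\emph{Your argument.} You glue the \(g_n\) by hand and check the algebra-morphism condition by truncation. An element of \((TU\iKomp C)_d\) involves only cells of dimension \(\le d\), so it lifts along \(TU\iKomp\counitSk_d\), where the required equation is the one for \(g_d\). This uses only the compatibility of \(T\) with truncation (Property~\ref{item:prop-tr-adjunction}) rather than finitarity. The price is some extra bookkeeping, and in effect you prove by hand that \(U\) creates this particular colimit.

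\emph{Uniqueness.} Your argument through Lemma~\ref{lem:morph-of-computads-determined-gen-Inv} and functoriality of \(\Inv\) is valid but heavier than needed. Since \(\iCell(\counitSk_n)\) is bijective in dimension \(n\), the underlying globular map of any factorisation is already forced. That is precisely the fact the paper's argument starts from.
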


\begin{proof}
    The first point is an immediate consequence of
    \(\iKomp\sk_{-1}C = F(\emptyset) = \emptyset\) having no cells. For the
    second point, we use that left adjoints commute with copowers, that
    \(\Disk^0\) is \(0\)\=/truncated and that every set is a copower of the
    singleton set, which is precisely \(\Cptd_0\Disk^0\), to compute that:
    \begin{equation}\label{eq:free-cats-as-colimits-zero}
        \begin{split}
            V_0^C \cdot F\Disk^0
                &\cong F(V_0^C \cdot \Disk^0)
                 \cong \iKomp\Cptd(V_0^C \cdot \Disk^0) \\
                &\cong \iKomp(V_0^C \cdot \Cptd\Disk^0)
                 \cong \iKomp(V_0^C \cdot \sk_0\Cptd_0\Disk^0) \\
                &\cong \iKomp\sk_0(V_0^C \cdot \Cptd_0\Disk^0)
                 \cong \iKomp\sk_0 C
        \end{split}
    \end{equation}
    The third point is an restatement of
    Lemma~\ref{lem:morph-of-computads-determined-gen-Inv}, using the
    representability result of Proposition~\ref{prop:inv-struct-repr}. Finally,
    for the last point, we use that the morphism \(\iCell(\counitSk_n^k)\) is
    bijective on cells of dimension at most \(n\), so the diagram above becomes
    a colimit of globular sets after applying the forgetful functor
    \(U\colon \Cat_\omega \to \gSet\). As the free \(\omega\)\=/category monad
    \(T\) preserves filtered colimits~\cite[Proposition~4.4]{
    deanComputadsWeak$omega$categories2024}, the forgetful functor \(U\)
    creates them, so the diagram above is also a colimit of
    \(\omega\)\=/categories.
\end{proof}



\section{The subcategory of rigid morphisms}
\label{subsec:rigid}

We conclude this article with a study of the subcategory of \emph{rigid}
morphisms between computads with invertible generators, often known as morphisms
of computads. These are the morphisms that preserve both kinds of generators.
In analogy with the case of ordinary computads, we show that rigid morphisms
form a wide subcategory of \(\iComp\) which is a presheaf topos.

\begin{definition}\label{def:rigid-morphisms}
  We define the class of \emph{rigid} morphisms recursively as follows:
  \begin{itemize}
    \item Every morphism of \(n\)\=/computads is rigid for \(n \le 0\).
    \item A morphism \(\sigma : C \to D\) of \((n+1)\)\=/computads for
      \(n\in \Nat\) is rigid when
      \begin{itemize}
        \item its truncation \(\sigma_{n} : C_{n} \to D_{n}\) is rigid,
        \item the cell \(\sigma_{V}(\gen(v))\) is of the form \(\gen(v')\) for
          every \(v \in V^{C}_{n}\),
        \item the element \(\sigma_{U}(\igen(u))\) is of the form \(\igen(u')\)
          for every \(u \in U^{C}_{n}\).
      \end{itemize}
    \item A morphism \(\sigma : C \to D\) of computads is rigid when
      \(\sigma_n\colon C_n\to D_n\) is rigid for every \(n \in \Nat\).
  \end{itemize}
\end{definition}

An easy consequence of the definition is that rigid morphisms are closed under
composition and contain identities. Therefore, computads and rigid
morphisms form a wide subcategory \(\iComprig_n\) of \(\iComp_n\) for every
\(n\in \Nat\cup\{-1,\omega\}\), whose inclusion we will denote by
\begin{equation}\label{eq:rigid-inclusion}
   \rigincl_n\colon \iComprig_n \to \iComp_n
\end{equation}
The truncation functors \(\tr^m_n\) for \(-1\le n < m \le \omega\) preserve
rigid morphisms by definition, so they restrict to functors between the
respective subcategories of rigid morphisms. Similarly, the skeleton
functors \(\sk^m_n\) restrict as well and so does the counit \(\counitSk^m_n\)
of the skeleton-truncation adjunction, since its components are rigid morphisms,
so the restriction of \(\sk^m_n\) is left adjoint to the restriction of
\(\tr^m_n\) with the same counit.

\begin{lemma}\label{lem:coskeleton}
  The truncation functors \(\tr^m_n\colon\iComprig_m\to\iComprig_n\) admit a
  right adjoint left inverse \(\cosk^m_n\) for all \(-1\le n < m \le \omega\).
\end{lemma}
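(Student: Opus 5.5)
Following the proof of Proposition~\ref{prop:tr-is-rali}, I will build \(\cosk^m_n\) by recursion on \(k\) with \(m = n+1+k\). The successor case follows by composing adjunctions, since \(\tr^{n+1+k}_n = \tr^{n+k}_n\tr^{n+1+k}_{n+k}\). For \(m=\omega\), I will take the unique functor into the limit whose \(r\)\=/truncations are \(\tr^n_r\), \(\id\) or \(\cosk^r_n\) according as \(r<n\), \(r=n\) or \(r>n\). This needs the analogue of~\eqref{eq:counit-sk-equations-general} for coskeleta, namely \(\tr^{m}_{r}\cosk^m_n = \cosk^r_n\) for \(r>n\), which I will prove by induction on \(k\). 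The unit is assembled in the same way from its truncations. Hom-sets in \(\iComprig\) are limits of the truncated hom-sets, so the bijection in the limit case follows as at the end of the proof of Theorem~\ref{thm:unfolding}. The real content is therefore the base case \(m=n+1\).

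For \(n=-1\) I will let \(\cosk^0_{-1}\) pick the terminal set, which is the terminal object of \(\iComp_0=\Set\); rigidity is vacuous there. For \(n\in\Nat\) and \(C\in\iComp_n\), I will define \(\cosk^{n+1}_n C\) as the \((n+1)\)\=/computad with underlying \(n\)\=/computad \(C\) and generators
\[ V = U = \Par_n(\iCell_n(C)), \qquad \phi=\psi=\id. \]
That is, I adjoin exactly one ordinary and one invertible generator for every parallel pair of \(n\)\=/cells. The unit at \(D\in\iComp_{n+1}\) will be \((\id_{D_n},\ \gen\circ\phi^D_{n+1},\ \igen\circ\psi^D_{n+1})\), and this is rigid by construction. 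The truncation of \(\cosk^{n+1}_n C\) is \(C\) on the nose, which gives the "left inverse" equation.

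For the universal property, I take a rigid \(f\colon D\to\cosk^{n+1}_n C\). Rigidity forces \(f_V(v)=\gen(v')\) and \(f_U(u)=\igen(u')\), and the squares~\eqref{eq:def-morphism} then force \(v'=\Par_n(\iCell_n(f_n))(\phi^D(v))\), and likewise for \(u'\). So \(f\) is uniquely determined by \(f_n\). Conversely, every rigid \(g\colon D_n\to C\) extends in this way to a rigid morphism. This yields the natural bijection
\[ \iComprig_{n+1}(D,\cosk^{n+1}_n C)\cong\iComprig_n(D_n,C). \]
Functoriality of \(\cosk\) on rigid maps \(h\colon C\to C'\) will send generators via \(\Par_n(\iCell_n(h))\), and the required squares commute tautologically.

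I expect the main obstacle to be the step that uses injectivity of the constructors: I need that \(\gen(v')=\gen(v'')\) forces \(v'=v''\), and similarly for \(\igen\), so that the forced value is genuinely unique. This follows because cells and the invertibility set are initial algebras, hence disjoint unions of their constructors (Theorem~\ref{thm:initial-algebra-exists}). Beyond that, I expect only bookkeeping in checking that the limit-case definition is compatible with truncations.
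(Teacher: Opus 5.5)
Your proposal is correct and matches the paper's proof. It uses the same coskeleton (underlying computad \(C\), ordinary and invertible generators both \(\Par_n\iCell_n C\) with identity attaching maps) and the same unit \((\id,\gen\circ\phi^D_{n+1},\igen\circ\psi^D_{n+1})\), and it reduces the general case to \(m=n+1\) as in Proposition~\ref{prop:tr-is-rali}; the only difference is that you check the adjunction through the hom-set bijection rather than the triangle identities. The obstacle you anticipate is not really one: since the attaching maps of \(\cosk^{n+1}_n C\) are identities, \(\bdry_n(\gen(v'))=v'\) and \(\ibdry_n(\igen(u'))=u'\), so the boundary function recovers \(v'\) and \(u'\) directly and you do not need injectivity of the constructors.
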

\begin{proof}
  We will only construct a right adjoint to \(\tr^{n+1}_n\) for \(n\in \Nat\),
  since the general case can then be handled as in
  Proposition~\ref{prop:tr-is-rali}. We define the coskeleton functor
  \(\cosk^{n+1}_n\colon \iComp_n\to \iComp_{n+1}\) to send \(C\) to the
  \((n+1)\)\=/computad with underlying \(n\)\=/computad \(C\), sets of ordinary
  and invertible generators given by \(\Par_n\Cell_n C\), and attachment maps
  given by the identity function. Similarly, it sends a morphism \(f : C \to D\)
  to the morphism \(\cosk^{n+1}_n f\) consisting of \(f\) and the composites
  \(\gen_n\circ \Par_n\Cell_n f\) and \(\igen_n\circ \Par_n\Cell_n f\). We
  observe immediately that \(\cosk^{n+1}_n\) is left inverse to
  \(\tr^{n+1}_n\) and that for every \((n+1)\)\=/computad \(C\), we can define a
  rigid morphism:
  \begin{equation}\label{eq:coskeleton-counit}
    \unitCosk^{n+1}_{n,C}\colon C\to \cosk^{n+1}_n \tr^{n+1}_n C
  \end{equation}
  consisting of the identity of \(C_n\) and the functions
  \(\gen\circ\ \phi_{n+1}^C\) and \(\igen\circ\ \psi_{n+1}^C\). The morphisms
  \(\unitCosk^{n+1}_{n,C}\) are not in general natural in \(C\). However, they
  are natural with respect to rigid morphisms, and the coskeleton functor
  preserves rigid morphisms. It follows that \(\cosk^{n+1}_n\) restricts to a
  functor \(\iComprig_n\to \iComprig_{n+1}\) and \(\unitCosk^{n+1}_n\) defines
  a natural transformation \(\id\Rightarrow \cosk^{n+1}_n\tr^{n+1}_n\) between
  the restrictions. To show that \(\cosk^{n+1}_n\) is right adjoint to
  \(\tr^{n+1}_n\) with unit \(\unitCosk^{n+1}_n\) and counit the identity
  natural transformation, it remains to check the triangle identities:
  \begin{align}\label{eq:coskeleton-triangle}
    \tr^{n+1}_n \unitCosk^{n+1}_n &= \id &
    \unitCosk^{n+1}_n \cosk^{n+1}_n &= \id
  \end{align}
  which follow immediately from the definitions.
\end{proof}

\begin{corollary}\label{cor:icomprig-terminal}
  The category \(\iComprig_{n}\) has a terminal object \(\mathbbm{1}_n\).
\end{corollary}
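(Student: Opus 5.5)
We build the terminal object by iterating the coskeleton functors of Lemma~\ref{lem:coskeleton}, starting from the terminal object in dimension \(-1\). The proof is by recursion on \(n\in\Nat\cup\set{-1,\omega}\). For \(n=-1\), \(\iComprig_{-1}=\iComp_{-1}\) is the terminal category, and we let \(\mathbbm{1}_{-1}\) be its unique object. For every \(n\), the functor \(\cosk^{n}_{-1}\colon\iComprig_{-1}\to\iComprig_n\) is a right adjoint by Lemma~\ref{lem:coskeleton}. Right adjoints preserve limits, in particular terminal objects, so we set \(\mathbbm{1}_n=\cosk^n_{-1}(\mathbbm{1}_{-1})\). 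Concretely, for every \(C\in\iComprig_n\) the adjunction gives
\[
\iComprig_n(C,\cosk^n_{-1}\mathbbm{1}_{-1})\cong\iComprig_{-1}(\tr^n_{-1}C,\mathbbm{1}_{-1}),
\]
and the right-hand side is a singleton.

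To use Lemma~\ref{lem:coskeleton} in this way, we need the case \(m=n\), \(n=-1\), and the case \(m=\omega\). The lemma is stated for all \(-1\le n<m\le\omega\), but its proof only treats \(\tr^{n+1}_n\) for \(n\in\Nat\) and refers to Proposition~\ref{prop:tr-is-rali} for the rest. We therefore check the missing base step \(\tr^0_{-1}\). Here \(\iComprig_0=\Set\), and a right adjoint to the functor \(\Set\to\star\) is given by sending the point to a one-element set. This is terminal in \(\Set\), and every morphism of \(0\)-computads is rigid by definition. The compositions for finite \(m\) then follow as in Proposition~\ref{prop:tr-is-rali}, because right adjoints compose.

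The main point needing care is \(m=\omega\). Here we do not rely on the general machinery. Instead we define \(\mathbbm{1}_\omega\) directly as the compatible family \((\mathbbm{1}_n)_{n\in\Nat}\). This family is well defined because \(\tr^{n+1}_n\cosk^{n+1}_n=\id\), so \(\tr^{n+1}_n\mathbbm{1}_{n+1}=\mathbbm{1}_n\). A rigid morphism \(C\to\mathbbm{1}_\omega\) is then a compatible family of rigid morphisms \(C_n\to\mathbbm{1}_n\), since \(\iComp\) is the limit of the \(\iComp_n\) and rigidity is checked levelwise. Each of these morphisms exists and is unique by the finite case. Compatibility is automatic: \(\tr^{n+1}_n\) applied to the unique map \(C_{n+1}\to\mathbbm{1}_{n+1}\) is a map \(C_n\to\mathbbm{1}_n\), so by uniqueness it is the unique such map. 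Hence \(\mathbbm{1}_\omega\) is terminal in \(\iComprig_\omega\).

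It is also useful to record what \(\mathbbm{1}_n\) looks like, since this matters for the presheaf description later. It has a single \(0\)-cell. Its ordinary and invertible \((k+1)\)-generators are both indexed by \(\Par_k\Cell_k\mathbbm{1}_k\), and both attaching maps are the identity.
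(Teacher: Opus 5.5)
Your proof is correct and follows the paper's argument: \(\mathbbm{1}_n\) is \(\cosk^n_{-1}\) applied to the unique object of the terminal category \(\iComprig_{-1}\), and it is terminal because \(\cosk^n_{-1}\) is a right adjoint. Your separate checks of the step \(\tr^0_{-1}\) and of the case \(n=\omega\) only spell out what the paper covers by ``as in Proposition~\ref{prop:tr-is-rali}''.
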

\begin{proof}
  The coskeleton functor \(\cosk^n_{-1}\) is a right adjoint with source the
  terminal category, so it picks a terminal object of \(\iComprig_n\) for
  every \(n\).
\end{proof}

When restricting to the subcategory of rigid morphisms, the assignments
returning the sets of \(m\)\=/dimensional ordinary and invertible generators of
an \(n\)\=/computad for \(m \le n\) become functorial:
\begin{align}\label{eq:rigid-generator-assigment}
    V^{\bullet}_{m} &\colon \iComprig_n \to \Set &
    U^{\bullet}_{m} &\colon \iComprig_n \to \Set
\end{align}
Indeed, this follows by definition of rigid morphisms when \(m = n \in \Nat\),
and by composition with the truncation functors otherwise. Additionally, the
attachment maps of the generators define natural transformations of the form:
\begin{align}\label{eq:rigid-generator-attachment}
  \phi_{m}^{\bullet}
    &\colon V^{\bullet}_{m} \Rightarrow \Par_{m}\circ \Cell_n &
  \psi_{m}^{\bullet}
    &\colon U^{\bullet}_{m} \Rightarrow \Par_{m}\circ \Cell_n
\end{align}
Naturality of these assignments is a direct consequence of commutativity of the
squares~\ref{eq:def-morphism} in the definition of morphism of computads,
and the definition of the boundary natural transformation on generators and
invertible generators.

\begin{lemma}\label{eq:rigid-geometric-reflects-iso}
  A morphism of \(n\)\=/computads \(\sigma\colon C\to D\) is invertible if and
  only if it is rigid and induces bijections on the sets of ordinary and
  invertible generators in each dimension \(m \le n\). In particular, the
  functors \(V^{\bullet}_m\) and \(U^{\bullet}_m\) jointly reflect isomorphisms.
\end{lemma}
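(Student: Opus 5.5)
We argue by recursion on the dimension \(-1\le n\le\omega\), proving both implications at once. For \(n\le 0\) the claim is immediate: \(\iComp_{-1}\) is terminal, and \(\iComp_0\cong\Set\) has \(V_0\) as its only data, with every morphism rigid.

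For the forward direction in the successor case, let \(\sigma\colon C\to D\) be an isomorphism of \((n+1)\)\=/computads with inverse \(\tau\). Truncation is a functor, so \(\sigma_n\) is an isomorphism, and by the inductive hypothesis it is rigid and bijective on generators in dimensions \(\le n\).

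\begin{itemize}
\item \emph{Ordinary generators.} Since \(\tau\circ\sigma=\id\), \eqref{eq:def-composition} gives \(\iCell_{n+1}(\tau)(\sigma_V(v))=\gen(v)\). We inspect the clauses of \eqref{eq:iCell-morphisms}: the image of a cell of the form \(\fwd\), \(\linv\), \(\rinv\) or \(\coh\) keeps that outermost constructor. So \(\sigma_V(v)\) must itself be a generator \(\gen(v')\).
\item \emph{Invertible generators.} We argue the same way with \(\iSet_{n+1}(\tau)(\sigma_U(u))=\igen(u)\). The map \(\iSet_{n+1}(\tau)\) sends \(\lunit\) and \(\runit\) elements to \(\lunit\) and \(\runit\) elements, so \(\sigma_U(u)=\igen(u')\).
\end{itemize}

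Hence \(\sigma\) restricts to functions \(V^C_{n+1}\to V^D_{n+1}\) and \(U^C_{n+1}\to U^D_{n+1}\). The same holds for \(\tau\), and the identities \(\tau\sigma=\id\) and \(\sigma\tau=\id\) make these functions mutually inverse. The \(\omega\) case follows because rigidity and the generator sets are defined levelwise.

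For the converse, let \(\sigma\) be rigid with bijections \(\bar\sigma_V\) and \(\bar\sigma_U\) on top-dimensional generators. By induction \(\sigma_n\) has an inverse \(\tau_n\). We define \(\tau=(\tau_n,\gen\circ\bar\sigma_V^{-1},\igen\circ\bar\sigma_U^{-1})\).

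The main step is checking that the squares \eqref{eq:def-morphism} commute for \(\tau\). For \(w=\bar\sigma_V(v)\) we have \(\phi^D(w)=\bdry(\sigma_V(v))=\Par_n\iCell_n(\sigma_n)(\phi^C(v))\). Applying \(\Par_n\iCell_n(\tau_n)\), which is the inverse of \(\Par_n\iCell_n(\sigma_n)\) by functoriality, gives \(\Par_n\iCell_n(\tau_n)(\phi^D(w))=\phi^C(v)=\bdry(\tau_V(w))\). The square for \(\psi\) is handled identically.

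It remains to show that \(\tau\) is inverse to \(\sigma\). By \eqref{eq:def-composition}, \(\tau\circ\sigma\) has components \(\tau_n\sigma_n=\id\), \(v\mapsto\iCell(\tau)(\gen(\bar\sigma_V v))=\gen(v)\), and \(u\mapsto\igen(u)\). This is exactly \(\id_C=(\id,\gen,\igen)\), and symmetrically \(\sigma\circ\tau=\id_D\). In the limit case \(n=\omega\), the levelwise inverses are compatible with truncation by uniqueness of inverses, so they assemble into an inverse in the limit category.

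For the final clause, suppose \(\sigma\) is a morphism in \(\iComprig_n\) such that every \(V^\bullet_m(\sigma)\) and \(U^\bullet_m(\sigma)\) is a bijection. By the converse direction \(\sigma\) has an inverse in \(\iComp_n\). This inverse is rigid by the forward direction, so it lies in \(\iComprig_n\), and hence the functors \(V^\bullet_m\) and \(U^\bullet_m\) jointly reflect isomorphisms.
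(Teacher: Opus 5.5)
Your proof is correct and follows the paper's argument. For the forward direction, your clause-by-clause inspection of \(\iCell_{n+1}(\tau)\) and \(\iSet_{n+1}(\tau)\), applied to \(\tau\sigma=\id\) and \(\sigma\tau=\id\), is the paper's observation that morphisms only send generators to generators, which gives ``\(g\circ f\) rigid implies \(f\) rigid''. For the converse, you build the inverse from the inverse bijections by induction on dimension, as the paper does. You also spell out details the paper leaves implicit: that the squares \eqref{eq:def-morphism} commute for the inverse, the limit case, and why the inverse lies in \(\iComprig_n\). All of these are handled correctly.
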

\begin{proof}
  Using that only generators can be sent to generators by morphisms of
  computads, we can conclude that if a composite \(g\circ f\) of morphisms is
  rigid, then the same must be true for \(f\). In particular, if
  \(f\colon C\to D\) is invertible, then \(f^{-1}\circ f = \id\) is rigid and
  hence so is \(f\). By functoriality of \(V^{\bullet}_m\) and
  \(U^{\bullet}_m\), the morphism \(f\) must induce bijections on the sets of
  generators. The converse is a straightforward induction on the dimension
  \(n\): the inverse of a rigid morphism that induces bijections on the sets of
  generators is obtained by the inverses of the bijections.
\end{proof}

Makkai's characterises presheaf topoi~\cite{makkaiWordProblemComputads} as the
cocomplete categories that can be equipped with a functor to \(\Set\) that is
conservative, cocontinuous, and whose category of elements is a small disjoint
union of full subcategories each with an initial object. The last condition is
equivalent to the functor being familially representable, as the category of
elements functor preserves and reflects coproducts, and a functor to \(\Set\) is
representable if and only if its category of elements has an initial object.

We will apply Makkai's characterisation to show that the category
\(\iComprig_n\) is a presheaf topos for every \(n\in \Nat\cup\{-1,\omega\}\) by
considering the functor:
\begin{align}\label{eq:rigid-geometric-functor}
    \abs{-} &\colon \iComprig_n \to \Set &
    \abs{C} &= \coprod_{m \le n} (V^C_m \amalg U^C_m)
\end{align}
sending a computad to the disjoint union of its sets of generators. This functor
is conservative by Lemma~\ref{eq:rigid-geometric-reflects-iso}. We then show
that is \(\iComprig_n\) is cocomplete and that \(\abs{-}\) is cocontinuous. The
proof is completely analogous to the one given by Dean et
al.~\cite{deanComputadsWeak$omega$categories2024} for ordinary computads.

\begin{proposition}\label{prop:icomprig-cocomplete}
  The category \(\iComprig_{n}\) is cocomplete, and the functor \(\rigincl_{n}\)
  preserves colimits for every \(n\in \Nat\cup\{-1,\omega\}\).
\end{proposition}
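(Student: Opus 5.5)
We argue by recursion on the dimension \(n\in\Nat\cup\{-1,\omega\}\), following the strategy of Dean et al.\ for ordinary computads. For \(n=-1\) the category is terminal. For \(n=0\) it is \(\Set\) with all morphisms rigid, and \(\rigincl_0\) is the identity. So both claims hold. For \(n=\omega\) we use that \(\iComprig\) is the limit of the categories \(\iComprig_n\) along the truncation functors. Each truncation \(\tr^{n+1}_n\) is a left adjoint on rigid morphisms by Lemma~\ref{lem:coskeleton}, so it preserves colimits. Colimits in a limit of a tower of cocomplete categories along cocontinuous functors can therefore be computed levelwise. The same levelwise computation in \(\iComp\), together with the inductive claim that each \(\rigincl_n\) preserves colimits, gives preservation by \(\rigincl_\omega\).

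For the successor step, fix a small diagram \(C_\bullet\colon J\to\iComprig_{n+1}\). First form the colimit \(L_n=\colim_j (C_j)_n\) in \(\iComprig_n\), which by induction is also a colimit in \(\iComp_n\) with coprojections \(\lambda_j\). Then take as generators the colimits of sets \(V=\colim_j V^{C_j}_{n+1}\) and \(U=\colim_j U^{C_j}_{n+1}\); these exist because \(V^\bullet_{n+1}\) and \(U^\bullet_{n+1}\) are functorial on rigid morphisms. The attaching maps \(\phi\colon V\to\Par_n\iCell_n(L_n)\) and \(\psi\colon U\to\Par_n\iCell_n(L_n)\) are induced by the cocone \(\Par_n\iCell_n(\lambda_j)\circ\phi^{C_j}_{n+1}\), which is compatible by naturality of the attaching maps~\eqref{eq:rigid-generator-attachment}. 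This defines \(L=(L_n,V,U,\phi,\psi)\), with evident rigid coprojections \(C_j\to L\).

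It remains to check the universal property in \(\iComp_{n+1}\), not only among rigid cocones, since that simultaneously proves cocompleteness and preservation by \(\rigincl_{n+1}\). Take a cocone \(g_j\colon C_j\to E\) of arbitrary morphisms. Its truncations factor uniquely through \(L_n\) via some \(g_n\colon L_n\to E_n\). The components \((g_j)_V\colon V^{C_j}\to\iCell_{n+1}(E)_{n+1}\) and \((g_j)_U\colon U^{C_j}\to\iSet_{n+1}(E)\) form cocones of sets, because rigid transition maps act on top generators by the underlying maps of sets. They therefore induce \(g_V\) and \(g_U\). The squares~\eqref{eq:def-morphism} hold on each image of \(V^{C_j}\) and \(U^{C_j}\), and these images jointly cover \(V\) and \(U\), so \(g=(g_n,g_V,g_U)\) is a well-defined morphism. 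It is unique because any factorisation is determined by its truncation and its values on generators.

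The main obstacle is exactly this last point: the functor \(\iCell_n\) need not preserve colimits. Hence we cannot construct \(L\) by taking colimits of cells. We must rely instead on the fact that a morphism out of an \((n+1)\)-computad is determined freely by its truncation together with data on the top generators. This is what makes the "generators-only" colimit work, and the care lies in checking that the cocone conditions transfer correctly to the top-dimensional data.
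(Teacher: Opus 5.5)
Your proposal is correct and follows essentially the same route as the paper: you build the colimit from the colimit of the truncations and the colimits of the generator sets, induce the attaching maps through \(\Par_n\iCell_n\) of the coprojections, check the universal property against arbitrary cocones in \(\iComp_{n+1}\), and treat \(n=\omega\) levelwise. The one point to state carefully in the limit case is that \(\iComp\) is a strict limit of categories. What you actually use there is that your construction gives \(\tr^{n+1}_n L = L_n\) with strictly compatible coprojections (the paper's ``strict commutativity with the truncation functors''), not merely that truncations preserve colimits up to isomorphism.
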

\begin{proof}
  The base cases \(n = -1\) and \(n = 0\) are trivial, since \(\rigincl_{0}\) is
  the identity functor on the terminal category or the category of sets
  respectively, which are cocomplete. Suppose now that \(\iComprig_n\) is
  cocomplete for some \(n\in \Nat\) and that \(\rigincl_n\) is cocontinuous.
  Given a small diagram \(J : \mathcal{D} \to \iComprig_{n+1}\), we may then
  form the following colimits:
  \begin{equation}\label{eq:icomprig-cocomplete-definition}
    \begin{aligned}
      C_n &= \colim (\tr^{n+1}_n \circ\ J) \\
      V^C_{n+1} &= \colim (V^{\bullet}_{n+1} \circ J) \\
      U^C_{n+1} &= \colim (U^{\bullet}_{n+1}\circ J)
    \end{aligned}
  \end{equation}
  The whiskerings by \(J\) of the natural transformations
  \(\phi^{\bullet}_{n+1}\) and \(\psi^{\bullet}_{n+1}\) induce universal
  morphisms between the colimits:
  \begin{equation}\label{eq:icomprig-cocomplete-definition-att-1}
    \begin{aligned}
      \colim(\phi^{\bullet}_{n+1} J)
        &\colon V_{n+1}^C \to
          \colim(\Par_{n}\circ\Cell_{n}\circ\tr^{n+1}_n \circ\ J) \\
      \colim(\psi^{\bullet}_{n+1} J)
        &\colon U_{n+1}^C \to
          \colim(\Par_{n}\circ\Cell_{n}\circ\tr^{n+1}_n \circ\ J)
    \end{aligned}
  \end{equation}
  Composing these morphisms with the canonical morphism:
  \begin{equation}
    \label{eq:icomprig-cocomplete-definition-att-2}
    \colim(\Par_{n}\circ\Cell_{n}\circ\tr^{n+1}_n \circ\ J)
      \to \Par_{n}\circ\Cell_{n}(C_n)
  \end{equation}
  induced by the universal property of the colimit and the cocone defining
  \(C_n\), we obtain an \((n+1)\)\=/computad \(C\). The cocones defining
  \(C_n\), \(V^C_{n+1}\) and \(U^C_{n+1}\) can be combinined to define a cocone
  for \(J\) with apex \(C\) consisting of rigid morphisms. The universal
  property of this cocone both in \(\iComprig_{n+1}\) and in \(\iComp_{n+1}\)
  is an immediate consequence of the universal properties of \(C_n\) and the
  colimits defining \(V^C_{n+1}\) and \(U^C_{n+1}\). Finally, the case
  \(n = \omega\) follows by strict commutativity of the construction of the
  colimit above with the truncation functors.
\end{proof}

\begin{corollary}\label{prop:icomprig-cocontinuous}
  The functors \(V^\bullet_m\) and \(U^\bullet_m\) are cocontinuous and so is
  \(\abs{-}\).
\end{corollary}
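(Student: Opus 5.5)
The plan is to read off cocontinuity of \(V^\bullet_m\) and \(U^\bullet_m\) from the explicit construction of colimits in the proof of Proposition~\ref{prop:icomprig-cocomplete}, and then to deduce cocontinuity of \(\abs{-}\) formally. Fix a small diagram \(J\colon\mathcal{D}\to\iComprig_n\) with colimit \(C\). Since colimits are unique up to isomorphism, we may take \(C\) to be the colimit built in that proof. By construction, for \(m=n\) the set \(V^C_n\) is defined as \(\colim(V^\bullet_n\circ J)\), and the colimit injections are exactly the components \(V_n\) of the cocone legs. So the canonical comparison \(\colim(V^\bullet_n J)\to V^C_n\) is the identity, and the same holds for \(U^\bullet_n\).

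For \(m<n\) we use the factorisation \(V^\bullet_m = V^\bullet_m\circ\tr^n_m\), and likewise for \(U^\bullet_m\), which holds by the definition of these functors. It therefore suffices to know that \(\tr^n_m\colon\iComprig_n\to\iComprig_m\) preserves colimits. This follows because it is a left adjoint: Lemma~\ref{lem:coskeleton} provides the right adjoint \(\cosk^n_m\) on rigid subcategories. Alternatively, one can observe that the construction in Proposition~\ref{prop:icomprig-cocomplete} defines \(\tr^{n+1}_n C\) as \(\colim(\tr^{n+1}_n J)\). Combining this with induction on \(m\) (or directly with \(\tr^n_m\) being a left adjoint) gives cocontinuity of \(V^\bullet_m\) and \(U^\bullet_m\) for every \(m\le n\).

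The case \(n=\omega\) needs a separate check, because the construction of colimits was obtained from the finite stages by strict compatibility with truncation. There, \(V^\bullet_m\) on \(\iComprig_\omega\) factors through \(\tr_m\colon\iComprig_\omega\to\iComprig_m\). The functor \(\tr_m\) is again a left adjoint by Lemma~\ref{lem:coskeleton} with \(n=m\) and \(m=\omega\), so the argument above applies unchanged.

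Finally, \(\abs{-}=\coprod_{m\le n}(V^\bullet_m\amalg U^\bullet_m)\) is a small coproduct of cocontinuous functors into \(\Set\). Colimits in \(\Set\) commute with coproducts, because colimits commute with colimits. Hence \(\abs{-}\) is cocontinuous.

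The only delicate point is making sure the comparison maps are really the canonical ones rather than just some abstract bijection. I would handle this by noting that the cocone legs of \(C\) in Proposition~\ref{prop:icomprig-cocomplete} are rigid morphisms whose generator components are literally the colimit injections \(\gen\) and \(\igen\) of the sets \(V^C_{n+1}\) and \(U^C_{n+1}\).
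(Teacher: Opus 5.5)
Your proposal is correct and follows essentially the same route as the paper. The top-dimensional case is read off from the explicit colimit construction in Proposition~\ref{prop:icomprig-cocomplete}, and the lower-dimensional cases (including $n=\omega$) reduce to it because truncation is a left adjoint by Lemma~\ref{lem:coskeleton}, after which cocontinuity of $|-|$ follows since colimits commute with coproducts in sets.
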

\begin{proof}
  This is immediate from the constuction of colimits when \(n = m\in \Nat\).
  The case \(m < n\) follows by the previous case and by the truncation functors
  being left adjoints and hence cocontinuous. Finally, cocontinuity of
  \(\abs{-}\) follows from that of \(V^\bullet_m\) and \(U^\bullet_m\) and
  colimits commuting with colimits.
\end{proof}

To complete the proof that \(\iComprig_n\) is a presheaf topos, it remains to
show that the functor \(\abs{-}\) is familially representable. To do that, we
will consider first the restriction of the functor of cells to the subcategory
of rigid morphisms:
\begin{equation}\label{eq:rigid-cell-functor}
    \iCellrig_n = \iCell_n \circ\ \rigincl_n \colon \iComprig_n \to \gSet_n \\
\end{equation}
We will show that the functor \(\iCellrig_n\) is familially representable and
then deduce a representation of \(\abs{-}\) from that.

\begin{theorem}\label{prop:cellrig-repr}
  The functor \(\iCellrig_n\) is familially representable for
  \(n\in\Nat\cup\set{\omega}\).
\end{theorem}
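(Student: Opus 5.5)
We will prove, by recursion on \(n\in\Nat\), that for every \(k\le n\) the functor \((\iCellrig_n)_k\colon\iComprig_n\to\Set\) of \(k\)\=/cells is a coproduct of representables. Concretely, we exhibit a set \(\mathcal{E}_k\) of \emph{generic} \(k\)\=/cells \((E,e)\), with \(E\in\iComprig_n\) and \(e\in\iCell_n(E)_k\), such that every cell \(c\in\iCell_n(C)_k\) factors as \(c=\iCell_n(\sigma)(e)\) for a unique generic pair \((E,e)\) and a unique rigid \(\sigma\colon E\to C\). Familial representability of \(\iCellrig_n\) into \(\gSet_n\) then amounts to this cell-wise statement, together with the observation that the source and target of a generic cell factor canonically through the generic representatives of the boundary.

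The definition of cells is inductive, so we construct the generic representative of a cell by structural recursion on it. For \(k\le n\), truncation and the skeleton adjunction of Proposition~\ref{prop:tr-is-rali} restricted to rigid morphisms reduce us to top-dimensional cells of \(\sk\tr C\). We treat the five constructors of \(\iCell_{n+1}(C)_{n+1}\) in turn.
\begin{itemize}
    \item For \(\gen_{n+1}(v)\), we take \(E\) to be the pushout in \(\iComprig_{n+1}\), which exists by Proposition~\ref{prop:icomprig-cocomplete}, of \(\Cptd(\Disk^{n+1})\) along the generic representative of its boundary sphere. Concretely, \(E\) is the generic representative \(E'\) of \(\phi(v)\) with one ordinary generator adjoined. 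The generic representative of a sphere is itself obtained recursively: a pair \((a,b)\) is the pushout of the generic representatives of \(a\) and \(b\) over that of their common boundary.
    \item For \(\fwd(i)\), \(\linv(i)\) and \(\rinv(i)\), we need generic representatives of elements \(i\in\iSet_{n+1}(C)\). We obtain these by recursion along \(\igen\), \(\lunit\) and \(\runit\). For \(\igen(u)\) we adjoin one invertible generator to the generic representative of \(\psi(u)\); \(\lunit(i)\) and \(\runit(i)\) inherit the representative of \(i\), since \(\iSet(\sigma)\) acts on them functorially.
    \item For \(\coh_{B,A}[f]\), the cell is determined by the morphism \(f\colon\iCptd\Pos(B)\to C\), which amounts to a compatible family of cells \(f_V(p)\) indexed by the positions \(p\) of \(B\). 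We take \(E\) to be the colimit in \(\iComprig\) of the generic representatives of the \(f_V(p)\), glued along the generic representatives of the cells shared by the globular structure of \(\Pos(B)\).
\end{itemize}

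Uniqueness of the factorisation relies on two observations. First, a rigid morphism sends generators to generators, so by the defining clauses~\eqref{eq:iCell-morphisms} it preserves the head constructor of a cell. Second, the generic \(E\) is generated by the support of \(e\), which is where the definition of \(\supp\) is used. The main obstacle is the coherence case: we must check that the colimit computing \(E\) is again a generic object, meaning that \(\id_E\) is not a nontrivial factorisation. This requires the pieces to be glued exactly along the shared boundary cells, and not along anything that happens to coincide in \(C\). To handle it, we prove by simultaneous induction the statement that generic pairs are precisely those \((E,e)\) with \(\supp(e)\), taken over all dimensions, equal to all the generators of \(E\), and that such pairs admit no nontrivial automorphisms fixing \(e\). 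Since colimits in \(\iComprig\) are computed generator-wise (Corollary~\ref{prop:icomprig-cocontinuous}), this reduces to combinatorics on generator sets.

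The case \(n=\omega\) follows because truncation commutes with everything constructed and a cell lives at a finite stage; generic representatives at stage \(k\) are then transported along \(\sk^\omega_k\).
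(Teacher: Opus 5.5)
Your constructions agree with the paper's case by case: one generator adjoined to the representative of the boundary sphere, spheres as pushouts over their common boundary, recursion along \(\igen\), \(\lunit\), \(\runit\), and a colimit over the positions of \(B\) for \(\coh_{B,A}[f]\). However, the step you identify as the main obstacle rests on a characterisation of generic pairs that is false.

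Being generated by the support of \(e\), and having no nontrivial automorphisms fixing \(e\), does not make \((E,e)\) generic. Let \(E\) have one \(0\)-generator \(x\) and one ordinary generator \(v\colon x\to x\), and take \(e=\gen(v)\).
\begin{itemize}
\item Every generator of \(E\) lies in the support of \(e\) or of its boundary.
\item The only rigid endomorphism of \(E\) fixing \(e\) is the identity.
\item Yet \(e\) is the image of \(\gen(v')\) under the rigid map from the free arrow \(v'\colon x_1\to x_2\) that identifies \(x_1\) and \(x_2\), and no rigid map goes back. The genuine representative is the free arrow.
\end{itemize}
The same phenomenon already appears in dimension \(0\), where \(\Par_0\iCellrig_0\) is represented by \(\mathbbm{1}_0\amalg\mathbbm{1}_0\), not by a point. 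A support condition rules out superfluous generators, but it cannot detect superfluous identifications. Those identifications are exactly what must be controlled in the coherence case; as you say yourself, the pieces must be glued only along the boundaries dictated by \(\Pos B\). So your simultaneous induction would not prove that the colimit is generic.

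The missing idea is to fix the shape of a cell before recursing, and then check the universal property directly. The paper indexes the family by the cells \(p\) of the terminal object \(\mathbbm{1}_n\) of \(\iComprig_n\), which comes from the coskeleta. It decomposes \(\iCellrig_n\), \(\Par_n\iCellrig_n\) and the invertibility sets into fibres over \(p\), and shows by structural recursion on \(p\) that each fibre is representable.

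In the coherence case, maps out of the colimit \(C_p\) are compatible families \(g^\dagger(q)\) lying over \(f^\dagger(q)\), i.e.\ globular maps \(\Pos B\to\iCell(C)\) over \(f^\dagger\). Their transposes \(g'\) satisfy \(!_C\circ g'=f\), so they correspond exactly to the cells \(\coh_{B,A}[g']\) lying over \(p\). This gives existence and uniqueness of the factorisation at once, with no support argument.

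A smaller slip: the pushout in your first bullet cannot be taken in \(\iComprig_{n+1}\), since the map classifying the boundary sphere is not rigid. Your concrete description, adjoining one generator to \(E'\), is the correct object.
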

\begin{proof}
  We will show by induction on \(n\in\Nat\) that the functors
  \(\iCellrig_n\), \(\Par_n\iCellrig_n\) and
  \(\iSetrig_n = \iSet_n\circ\ \rigincl_n\) are familially representable. To do
  that consider the \(n\)\=/globular set \(P_n = \iCellrig_n(\mathbbm{1}_n)\)
  and the sets \(Q_n = \Par_n P_n\) and \(L_n = \iSetrig_n(\mathbbm{1}_n)\).
  We can define then a family of functors indexed by \(m\le n\) and \(p\in P_m\)
  by:
  \begin{equation}\label{eq:cellrig-repr-functors}
    \begin{gathered}
      \iCellrig_n(-;p) \colon \iComprig_n \to \Set \\
      C \mapsto \set{c \in \Cell_n(C)_m \such \iCellrig_n(!_C)(c) = p}
    \end{gathered}
  \end{equation}
  and we can define functors \(\Par_n\iCellrig_n(-;q)\) for
  \(q\in Q_n\) and \(\iSetrig_n(-;l)\) for \(l\in L_n\) completely analogously.
  These functors give a decomposition of \(\iCellrig_n\):
  \begin{equation}\label{eq:cellrig-repr-decomposition}
     \begin{gathered}
        \iCellrig_n(-;p) \colon \iComprig_n \to \Set \\
        C \mapsto \set{c \in \Cell_n(C)_m \such \iCellrig_n(!_C)(c) = p}
     \end{gathered}
  \end{equation}
  and similarly of \(\Par_n\iCellrig_n\) and \(\iSetrig_n\). Therefore, it
  suffices to show that each of them are representable.

  We start with the base case \(n = 0\), where \(\iComprig_0 = \Set\). In this
  case, \(\iSetrig_{0}\) is constant at the empty set, so familially
  representable by the empty family. The functor \(\iCellrig_0\) is the identity
  functor on \(\Set\), so representable by the terminal set \(\mathbbm{1}_0\).
  The functor \(\Par_0\iCellrig_0\) is the diagonal functor, sending a set
  \(X\) to \(X\times X\) so it is represented by the coproduct
  \(\mathbbm{1}_0\amalg\mathbbm{1}_0\).
  For the inductive step, suppose that we have constructed representations for
  some \(n\in \Nat\) in order to construct them for \(n+1\).

  We will first
  start by constructing representations for \(\iSetrig_{n+1}(-;l)\) for
  \(l\in L_{n+1}\). For that, assume  that \(l = \igen q\) for some
  invertible generator \(q\) of \(\mathbbm{1}_{n+1}\). By the construction of
  the terminal object, we have that \(q \in Q_n\), so we may assume that
  \(\Par_n\iCellrig_n(-;q)\) is represented by some \(n\)\=/computad \(C_q\)
  with universal element \(A_q\in \Par_n\iCellrig_n(C_q;q)\). We define then
  \(C_l\) the be the \((n+1)\)\=/computad consisting of \(C_q\) and a unique
  invertible generator \(u\) with \(\psi^C_{n+1}(u) = A_q\). By definition of
  morphisms and \(C_q\), we see that morphisms \(f\colon C_l\to C\) correspond
  to pairs of \(A'\in \iCellrig_n(C;q)\) and \(u'\in U^C_{n+1}\) such that
  \(\psi^C_{n+1}(u') = A'\). These are in bijection to generators
  \(u'\in U^C_{n+1}\) such that
  \(\Par_n\iCellrig_n(!_C)(\psi^C_{n+1}(u')) = q\). The last equation is
  equivalent to \(\iSetrig_{n+1}(!_C)(\igen u') = l\), so get that \(C_l\)
  represents \(\iSetrig_{n+1}(-;l)\) with universal element \(\igen u\).

  Suppose then that \(l = \lunit l'\) for some \(l'\in L_{n}\). By the inductive
  hypothesis, we have that \(\iSetrig_n(-;l')\) is represented by some
  \(n\)\=/computad \(C_{l'}\) with universal element
  \(i_{l'}\in \iSetrig_n(C_{l'};l')\). We can then easily deduce that
  \(\iSetrig_n(-;l')\) is represented by the \((n+1)\)\=/computad
  \(C_l = \sk^{n+1}_n C_{l'}\) with universal element \(\lunit l'\). The case
  where \(l = \runit l'\) is completely analogous.

  We proceed now to show that \(\iCellrig_{n+1}(-;p)\) is representable by
  structural recursion on the cell \(p\in P_{n+1}\). The case where
  \(p = \gen q\) is analogous to the one for invertible generators: we let
  \(C_p\) be the \((n+1)\)\=/computad obtained by adding a unique ordinary
  generator \(v\) to the \(n\)\=/computad \(C_q\) with \(\phi^C_{n+1}(v) = A_q\)
  and we let the universal element be \(\gen v\). For the cases where
  \(p = \fwd l\), \(p = \linv l\) or \(p = \rinv l\), we may assume that
  \(\iSetrig_n(-;l)\) is represented by some computad \(C_l\) with
  universal element \(i_l\in \iSetrig_n(C_l;l)\) and let \(C_p = C_l\) with
  universal element \(\fwd i_l\), \(\linv i_l\) or \(\rinv i_l\) respectively.

  Suppose therefore that \(p = \coh_{B,A}[f]\) is a coherence cell for some
  morphism \(f\colon \Cptd_{n+1}(\Pos B)\to \mathbbm{1}_{n+1}\), and let
  \(f^\dagger\colon \Pos B \to P_{n+1}\) its transpose. By
  structural recursion, we may assume that a computad \(C_{f^\dagger(q)}\) has
  been defined for every \(q\in \Pos_m B\) representing
  \(\iCellrig_n(-;f^\dagger(q))\). We can then define \(C_p\) to be the colimit:
  \begin{equation}
    C_p = \colim_{q\in \Pos_m B} C_{f^\dagger(q)}
  \end{equation}
  indexed by the category of elements of the globular set \(\Pos B\) with
  morphisms given by the universal morphisms corresponding to source and target
  natural transformations \(\iCellrig_{n+1}(C)_{m+1}\Rightarrow
  \iCellrig_{n+1}(C)_m\). Morphisms \(g\colon C_p\to C\) correspond to cells
  \(g^\dagger(q)\in \iCellrig_n(C;f^\dagger(q))\) subject to source and boundary
  conditions. More precisely, the source and target conditions amount to
  \(g^{\dagger}\) being a morphisms of globular sets
  \(\Pos B \to \iCell_n(C)\) such that \(\iCell_{n+1}(!_C)\circ g^\dagger =
  f^\dagger\). Transposing \(g^\dagger\), we get a morphism of computads
  \(g'\colon \Cptd_{n+1}(\Pos B)\to C\) such that \(!_C\circ g' = f\) and
  hence a cell \(c = \coh_{B,A}[g']\in \iCellrig_{n+1}(C;p)\).

  Finally, suppose that some \(q = (p, p')\in Q_{n+1}\) is given and let
  \(q' = \bdry p = \bdry p' \in Q_n\). By the inductive hypothesis, we may
  assume that \((n+1)\)\=/computads \(C_{p}\) and \(C_{p'}\) representing
  \(\iCellrig_n(-;p)\) and \(\iCellrig_n(-;p')\) have been given, as well as
  an \(n\)\=/computad \(C_{q'}\) representing \(\Par_n\iCellrig_n(-;q')\). It
  follows easily that the \((n+1)\)\=/computad \(C_q\) obtained as the pushout:
  \begin{equation}\label{eq:cellrig-repr-spheres}
    \begin{tikzcd}
      \sk^{n+1}_nC_{q'} \ar[r]\ar[d]
      \ar[dr, "\ulcorner"{very near end}, phantom] & C_{p}
      \ar[d, dashed] \\
      C_{p'} \ar[r, dashed] & C_{q}
    \end{tikzcd}
  \end{equation}
  represents \(\iCellrig_{n+1}(-;q)\). The solid morphisms in the diagram
  are the universal morphisms corresponding to the boundary natural
  transformation.

  This completes the induction on \(n\in\Nat\). The case \(n = \omega\) follows
  immediately from the finite case and the skeleton and truncation adjunction.
\end{proof}

\begin{corollary}\label{cor:abs-repr}
  The functor \(\abs{-}\) is familially representable.
\end{corollary}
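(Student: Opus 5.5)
We deduce the familial representability of \(\abs{-}\) from Theorem~\ref{prop:cellrig-repr}. Since \(\abs{C} = \coprod_{m\le n}(V^C_m\amalg U^C_m)\) and a coproduct of familially representable functors is familially representable (concatenate the families), it suffices to show that each \(V^\bullet_m\) and \(U^\bullet_m\colon\iComprig_n\to\Set\) is familially representable for \(m\le n\). Precomposing with the truncation \(\tr^n_m\) reduces this further to the case \(m = n\). Indeed, \(\tr^n_m\) has the left adjoint \(\sk^n_m\) on rigid morphisms, so \(\iComprig_n(\sk^n_m R,-)\cong\iComprig_m(R,\tr^n_m-)\). Hence if \(V^\bullet_m\) is represented on \(\iComprig_m\) by a family \((R_j)\), then \(V^\bullet_m\circ\tr^n_m\) is represented by \((\sk^n_m R_j)\).

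For \(m = n+1\), we reuse the computads built in the proof of Theorem~\ref{prop:cellrig-repr}. Decompose \(V^C_{n+1}\) along the map \(V^C_{n+1}\to V^{\mathbbm{1}}_{n+1}\) induced by the unique rigid morphism \(!_C\). By the construction of \(\mathbbm{1}_{n+1}=\cosk^{n+1}_{-1}\), the ordinary generators of \(\mathbbm{1}_{n+1}\) are exactly the elements \(q\in Q_n=\Par_n P_n\). The fibre functor \(V^\bullet_{n+1}(-;q)\) is then represented by the computad \(C_{\gen q}\) from the proof, which is \(C_q\) with a single ordinary generator \(v\) attached along the universal sphere \(A_q\), and its universal element is \(v\). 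We check this directly. A rigid morphism \(C_{\gen q}\to C\) consists of a rigid morphism \(C_q\to C_n\), that is, a sphere \(A'\in\Par_n\iCellrig_n(C;q)\), together with a generator \(v'\in V^C_{n+1}\) with \(\phi^C_{n+1}(v')=A'\). Rigidity is what forces \(v\) to go to a generator rather than an arbitrary cell. Since \(A'\) is determined by \(v'\), these morphisms correspond exactly to the generators \(v'\) lying over \(q\). The invertible case \(U^\bullet_{n+1}(-;q)\) is literally the computad \(C_{\igen q}\) already treated in the proof, with universal element \(u\).

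For \(n=\omega\), the relevant functors factor through the finite truncations, so the finite case together with the skeleton–truncation adjunction gives the result, as at the end of Theorem~\ref{prop:cellrig-repr}. The base case \(n=0\) is immediate: \(V^\bullet_0\) is the identity on \(\Set\) and \(U^\bullet_0\) is empty.

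The main point needing care is the identification of the generator fibres over \(\mathbbm{1}_{n+1}\) with \(Q_n\). One must also check that the bijection is natural in rigid morphisms, which follows from the naturality of \(\phi^\bullet\) and \(\psi^\bullet\).
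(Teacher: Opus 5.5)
Your proof is correct and follows essentially the paper's route. Like the paper, you decompose \(V^\bullet_m\) and \(U^\bullet_m\) into fibres over the generators of the terminal computad, represent each fibre by the computads \(C_{\gen q}\) and \(C_{\igen q}\) from the proof of Theorem~\ref{prop:cellrig-repr}, and close under coproducts. The only difference is bookkeeping. The paper identifies these fibres with \(\iCellrig_n(-;\gen p)\) and \(\iCellrig_n(-;\fwd\igen p)\), using that only generators are sent to generators, and cites the theorem for all \(m\le n\) at once. You instead verify the representing property directly in the top dimension and reduce lower dimensions via \(\sk^n_m\dashv\tr^n_m\).
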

\begin{proof}
  Using the notation of the previous proof, for all finite \(m\le n\),
  we have that the
  functors \(V^\bullet_m\) and \(U^\bullet_m\) are familially representable as
  coproducts the representable functors:
  \begin{equation}\label{eq:abs-repr}
    \begin{aligned}
      V_m^\bullet
        &= \coprod_{p\in V_m^{\mathbbm{1}_n}} \iCellrig_n(-; \gen p) \\
      U_m^\bullet
        &= \coprod_{p\in U_m^{\mathbbm{1}_n}} \iCellrig_n(-;\fwd \igen p)
    \end{aligned}
  \end{equation}
  Since familially representable functors are closed under coproducts, the
  functor \(\abs{-}\) must also be familially representable.
\end{proof}

\begin{corollary}\label{cor:icomprig-topos}
  The category \(\iComprig_{n}\) is a presheaf topos for
  \(n\in\Nat\cup\set{-1,\infty}\).
\end{corollary}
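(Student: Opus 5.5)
The plan is to apply Makkai's characterisation of presheaf toposes, recalled just before Proposition~\ref{prop:icomprig-cocomplete}, to the generator functor \(\abs{-}\colon\iComprig_n\to\Set\) of~\eqref{eq:rigid-geometric-functor}. That characterisation requires three things: a cocomplete category, and a functor to \(\Set\) that is conservative and cocontinuous, and whose category of elements is a small disjoint union of full subcategories with initial objects. By the remark following Makkai's characterisation, the last condition is the same as the functor being familially representable.

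I will simply collect the previously established facts.

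\begin{itemize}
\item Cocompleteness of \(\iComprig_n\) is Proposition~\ref{prop:icomprig-cocomplete}.
\item Cocontinuity of \(\abs{-}\) is Corollary~\ref{prop:icomprig-cocontinuous}.
\item Conservativity follows from Lemma~\ref{eq:rigid-geometric-reflects-iso}. Suppose \(\abs{\sigma}\) is a bijection for a rigid \(\sigma\). Rigid morphisms send each summand \(V^C_m\) (resp.\ \(U^C_m\)) into the corresponding summand \(V^D_m\) (resp.\ \(U^D_m\)). Hence \(\abs{\sigma}\) is the coproduct of the maps \(V^\bullet_m(\sigma)\) and \(U^\bullet_m(\sigma)\). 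A coproduct of functions between coproducts, respecting the summands, is bijective only if each component is. So every \(V^\bullet_m(\sigma)\) and \(U^\bullet_m(\sigma)\) is a bijection, and the lemma gives that \(\sigma\) is invertible. Its inverse is also rigid, by the first half of the same lemma, so \(\sigma\) is an isomorphism in \(\iComprig_n\).
\item Familial representability is Corollary~\ref{cor:abs-repr}. This yields the required decomposition of the category of elements, with the representing computads as the initial objects.
\end{itemize}

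Together these give the result for \(n\in\Nat\), and the same argument covers \(n=\omega\). In the case \(n=\omega\), \(\abs{C}\) is a countable coproduct over all finite \(m\). Cocontinuity and familial representability still hold, since both properties are closed under coproducts of functors; the previous corollaries were stated for all finite \(m\le n\), so they apply summand by summand.

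The case \(n=-1\) must be checked by hand. There \(\iComprig_{-1}\) is the terminal category, which is presheaves on the empty category. Alternatively, \(\abs{-}\) is constant at \(\emptyset\): this functor is familially represented by the empty family and is trivially conservative on the terminal category.

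The step needing most care is the smallness requirement on the family of representables. The indexing sets are the generators of the terminal object \(\mathbbm{1}_n\), which Corollary~\ref{cor:icomprig-terminal} obtains from the coskeleton construction; I would check these sets are small. They are built from \(\Par_m P_m\) with \(P_m=\iCellrig_m(\mathbbm{1}_m)\), and each of these is a set, so the index is small. The separate pieces \(V\) and \(U\) do not overlap in the elements category, so the disjointness required by Makkai holds automatically.
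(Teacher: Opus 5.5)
Your proposal is correct and is essentially the paper's own proof. Both apply Makkai's characterisation to \(\abs{-}\), taking cocompleteness, conservativity, cocontinuity and familial representability from Proposition~\ref{prop:icomprig-cocomplete}, Lemma~\ref{eq:rigid-geometric-reflects-iso}, Corollary~\ref{prop:icomprig-cocontinuous} and Corollary~\ref{cor:abs-repr}; your extra checks (summand-wise conservativity, the cases \(n=-1\) and \(n=\omega\), and smallness of \(\abs{\mathbbm{1}_n}\)) only make explicit details the paper leaves implicit.
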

\begin{proof}
  We apply Makkai's characterisation of presheaf topoi to the functor
  \(\abs{-}\): the source category \(\iComprig_n\) is cocomplete by
  Proposition~\ref{prop:icomprig-cocomplete}, and the functor is conservative by
  Lemma~\ref{eq:rigid-geometric-reflects-iso}, cocontinuous by
  Corollary~\ref{prop:icomprig-cocontinuous} and familially representable by
  Corollary~\ref{cor:abs-repr}. More explicitly, Makkai's characterisation
  implies that \(\iComprig_n\) is equivalent to the category of presheaves on
  the category \(\mathcal{V}_n\) with objects the elements of
  \(\abs{\mathbbm{1}_n}\) and with morphisms \(p\to p'\) the morphisms between
  the computads representing the functors \(\iCellrig_n(-;p)\) and
  \(\iCellrig_n(-;p')\). The equivalence is the nerve-realisation
  adjunction induced by the inclusion of \(\mathcal{V}_n\) into \(\iComprig_n\),
  sending \(p\) into the computad representing \(\iCellrig_n(-;p)\).
\end{proof}


\section*{Acknowledgements}

We would like to thank Jamie Vicary for his support on this project. We would
also like to help Noam Zeilberger, Paul-André Melliès, and François Métayer at
IRIF.

\bibliographystyle{elsarticle-num}
\bibliography{bibliography}

\end{document}